\numberwithin{equation}{section}
\newtheorem{theorem}{Theorem}[section]
\newtheorem{proposition}[theorem]{Proposition}
\newtheorem{corollary}[theorem]{Corollary}
\newtheorem{lemma}[theorem]{Lemma}
\newtheorem{conjecture}[theorem]{Conjecture}
\newtheorem{question}[theorem]{Question}
\newtheorem{example}[theorem]{Example}
\newtheorem{remark}[theorem]{Remark}
\definecolor{2purple}{RGB}{204,102,255}
\definecolor{3green}{RGB}{0,204,0}
\newtheorem{defn}[theorem]{Definition}
\theoremstyle{definition}
\newcommand{\cocharge}{{\mathrm {cocharge}}}
\newcommand{\maj}{{\mathrm {maj}}}
\newcommand{\inv}{{\mathrm {inv}}}
\newcommand{\minimaj}{{\mathrm {minimaj}}}
\newcommand{\sign}{{\mathrm {sign}}}
\newcommand{\area}{{\mathrm {area}}}
\newcommand{\dinv}{{\mathrm {dinv}}}
\newcommand{\GL}{{\mathrm {GL}}}
\newcommand{\Mat}{{\mathrm {Mat}}}
\newcommand{\st}{{\mathrm {st}}}
\newcommand{\Stir}{{\mathrm {Stir}}}
\newcommand{\Hilb}{{\mathrm {Hilb}}}
\newcommand{\Rise}{{\mathrm {Rise}}}
\newcommand{\Val}{{\mathrm {Val}}}
\newcommand{\grFrob}{{\mathrm {grFrob}}}
\newcommand{\des}{{\mathrm {des}}}
\newcommand{\sort}{{\mathrm {sort}}}
\newcommand{\coinv}{{\mathrm {coinv}}}
\newcommand{\rev}{{\mathrm {rev}}}
\newcommand{\Gr}{{\mathrm {Gr}}}
\newcommand{\Res}{{\mathrm {Res}}}
\newcommand{\SYT}{{\mathrm {SYT}}}
\newcommand{\Frob}{{\mathrm {Frob}}}
\newcommand{\ch}{{\mathrm {ch}}}
\newcommand{\conv}{{\mathrm {conv}}}
\newcommand{\symm}{{\mathfrak{S}}}
\newcommand{\code}{{\mathrm{code}}}
\newcommand{\CC}{{\mathbb {C}}}
\newcommand{\ZZ}{{\mathbb {Z}}}
\newcommand{\FF}{{\mathbb {F}}}
\newcommand{\PP}{{\mathbb{P}}}
\newcommand{\VV}{{\mathbb{V}}}
\newcommand{\OP}{{\mathcal{OP}}}
\newcommand{\RRR}{{\mathcal{R}}}
\newcommand{\VVV}{{\mathcal{V}}}
\newcommand{\WWW}{{\mathcal{W}}}
\newcommand{\UUU}{{\mathcal{U}}}
\newcommand{\AAA}{{\mathcal{A}}}
\newcommand{\BBB}{{\mathcal{B}}}
\newcommand{\EEE}{{\mathcal{E}}}
\newcommand{\FFF}{{\mathcal{F}}}
\newcommand{\LLL}{{\mathcal{L}}}
\newcommand{\SSS}{{\mathcal{S}}}
\newcommand{\GGG}{{\mathcal{G}}}
\newcommand{\yy}{{\mathbf {y}}}
\newcommand{\zz}{{\mathbf {z}}}
\newcommand{\xx}{{\mathbf {x}}}
\newcommand{\Fl}{{\mathrm {Fl}}}
\newcommand{\Park}{{\mathrm {Park}}}
\newcommand{\gr}{{\mathrm {gr}}}
\newcommand{\II}{{\mathbf {I}}}
\newcommand{\ii}{{\mathbf {i}}}
\newcommand{\jj}{{\mathbf {j}}}
\newcommand{\PM}{{\textsc {PM}}}
\newcommand{\neglex}{{\texttt {neglex}}}
\newcommand{\initial}{{\mathrm {in}}}
\begin{document}

\title[Generalizations of the flag variety and delta operators]
{Generalizations of the flag variety tied to the Macdonald-theoretic delta operators}

\author{Brendon Rhoades}
\address
{Department of Mathematics \newline \indent
University of California, San Diego \newline \indent
La Jolla, CA, 92093-0112, USA}
\email{bprhoades@math.ucsd.edu}

\maketitle

The {\em coinvariant ring} is the quotient $R_n = \CC[x_1, \dots, x_n]/\langle e_1, \dots, e_n \rangle$ 
of the polynomial ring $\CC[x_1, \dots, x_n]$
by the ideal generated by the $n$ elementary symmetric polynomials $e_1, \dots, e_n$.
Since the polynomials $e_1, \dots, e_n$ are symmetric, the quotient $R_n$ carries the structure of a graded
$S_n$-module and is one of the most important representations in algebraic combinatorics.
In geometric terms, Borel \cite{Borel} proved that the coinvariant ring 
presents the cohomology of the flag variety: we have
$H^{\bullet}(\Fl(n)) = R_n$. 

In the early 1990s, Garsia and Haiman \cite{GH, HaimanQuotient} initiated the study of the 
{\em diagonal coinvariant ring} $DR_n$ obtained by modding out the polynomial ring
$\CC[x_1, \dots, x_n, y_1, \dots, y_n]$ by the ideal generated by positive-degree invariants of the diagonal
action of $S_n$.
Setting the $y$-variables in $DR_n$ to zero recovers $R_n$, so that $DR_n$ is a bigraded extension of 
$R_n$.
A decade later, Haiman \cite{HaimanVanish} used deep algebraic geometry to express the bigraded
$S_n$-isomorphism type of $DR_n$ as the symmetric function $\nabla  e_n$,
where $\nabla$ is a remarkable symmetric function operator which has the Macdonald polynomials 
as its eigenbasis.

Thanks to Haiman's result, finding the bigraded $S_n$-isomorphism type of $DR_n$ amounts 
to finding the expansion of $\nabla e_n$ in the Schur basis $\{ s_{\lambda} \,:\, \lambda \vdash n \}$
of symmetric functions.
Although some coefficients in this expansion are known \cite{HaglundSchroder},
 this  remains an open problem.
Haglund, Haiman, Remmel, Loehr, and Ulyanov \cite{HHLRU} conjectured, and 
Carlsson and Mellit \cite{CM} proved, the {\em Shuffle Theorem}: a
formula for the monomial expansion of $\nabla e_n$.

The Shuffle Theorem is a purely combinatorial statement about the symmetric function $\nabla e_n$.
In the decade between its conjecture and its proof, various 
extensions and refinements were proposed \cite{BGLX,HMZ} in an attempt make $\nabla e_n$
vulnerable to inductive attack.
The last of these to appear before the Shuffle Theorem was proved was the {\em Delta Conjecture}
of Haglund, Remmel, and Wilson \cite{HRW}. This conjecture depends on two parameters $k \leq n$ and
predicts the monomial expansion
 $\Delta'_{e_{k-1}} e_n$ where the {\em delta operator} $\Delta'_{e_{k-1}}$ is another Macdonald eigenoperator.
 The Delta Conjecture reduces to the Shuffle Theorem when $k = n$.
 The `Rise Version' of the Delta Conjecture was proven recently and independently by D'Adderio-Mellit \cite{DM}
 and Blasiak-Haiman-Morse-Pun-Seelinger \cite{BHMPS},
 but the full conjecture remains open. 
 
 After the formulation of the Delta Conjecture, Haglund, Rhoades, and Shimozono \cite{HRS}
introduced a family $R_{n,k}$ of singly-graded quotients 
 of $\CC[x_1, \dots, x_n]$ whose isomorphism type is (essentially) the $t = 0$ specialization of the 
 symmetric function $\Delta'_{e_{k-1}} e_n$.
 The ring $R_{n,k}$ reduces to the classical coinvariant ring $R_n$ when $k = n$.
 Whereas the algebra of $R_n$ is governed by the combinatorics of permutations in $S_n$,
 the algebra of $R_{n,k}$ is governed by the combinatorics of ordered set partitions of 
 $[n] := \{1, \dots, n \}$ into $k$ blocks.
 Many algebraic properties of $R_n$ generalize naturally to $R_{n,k}$; these quotient rings 
 constitute a coinvariant algebra for the Delta Conjecture.
 
 Returning to geometry, Pawlowski and Rhoades \cite{PR} introduced a variety $X_{n,k}$
 of spanning line configurations which is homotopy equivalent to the flag variety $\Fl(n)$ when $k = n$.
 A point in $X_{n,k}$ is a tuple $(\ell_1, \dots, \ell_n)$ of 1-dimensional subspaces  $\ell_i \subseteq \CC^k$ such that 
 we have the vector space sum $\ell_1 + \cdots + \ell_n = \CC^k$.
 The ring $R_{n,k}$ presents the cohomology $H^{\bullet}(X_{n,k})$, and geometric properties
 of $X_{n,k}$ are governed by combinatorial properties of ordered set partitions 
 (recast as {\em Fubini words}).
 The variety $X_{n,k}$ is a flag variety for the Delta Conjecture.
 Griffin, Levinson, and Woo \cite{GLW} gave a different geometric model for the Delta Conjecture which simultaneously generalizes
 the theory of {\em Springer fibers}.
 
 In {\bf Section~\ref{Symmetric}} we provide the 
 necessary background on symmetric function theory,
 as well as an introduction to the combinatorics of the Delta Conjecture.
 In {\bf Section~\ref{Quotient}}
we introduce the generalized coinvariant rings $R_{n,k}$ and use orbit harmonics to study them.
 {\bf Section~\ref{Spanning}} is devoted to
 the variety $X_{n,k}$ of spanning line configurations; we 
 show that $R_{n,k}$ presents its cohomology.
 In {\bf Section~\ref{Generalized}} we cover remarkable work of Griffin \cite{Griffin}
 and Griffin-Levinson-Woo \cite{GLW} which unifies the delta coinvariant theory with
 the theory of Tanisaki quotients and Springer fibers.
 We close in {\bf Section~\ref{Future}} with future directions and conjectures related largely 
 to the  structure of the full symmetric function $\Delta'_{e_{k-1}} e_n$
 (rather than its $t = 0$ specialization).
 These are largely algebraic, with ties to rings of differential forms.
 Given the work of Haiman on Hilbert schemes and diagonal coinvariants, it is our hope
 that geometry will appear in the more general setting of $\Delta'_{e_{k-1}} e_n$, as well.

 \subsection{A note to the geometrically inclined reader}
 If one wants to learn about the varieties $X_{n,k}$ right away,
 it is advised to skip to Section~\ref{Spanning}
 (accepting certain combinatorial results in Section~\ref{Symmetric}
and algebraic results in Section~\ref{Quotient} as black boxes).
 On the other hand, the symmetric function theory of Section~\ref{Symmetric} was the primary motivation
 for defining the quotient rings $R_{n,k}$ of Section~\ref{Quotient}, which was in turn the primary motivation
 for finding a variety $X_{n,k}$ with $H^{\bullet}(X_{n,k}) = R_{n,k}$.
 So the combinatorially or algebraically inclined reader should not miss out on these!

 \subsection{A word on coefficients}
 We work over the complex field $\CC$ for convenience, but the geometric 
 results presented here
 can be shown to
 hold over the ring of integers $\ZZ$ with slightly more work.
 We describe the  form of the arguments used in passing from $\CC$ to $\ZZ$
 after presenting the  
 cohomology of $X_{n,k}$ (see Remark~\ref{coefficient-remark}).

\section{Symmetric functions and delta operators}
\label{Symmetric}

Much of the algebraic and geometric
material in this chapter involves the machinery of symmetric functions.
We quickly introduce the necessary background and refer the reader to  \cite{HaglundBook} for a comprehensive treatment.

\subsection{Symmetric functions and $S_n$-modules}
Let $\xx = (x_1, x_2, \dots )$ be an infinite list of variables.
For a partition $\lambda = (\lambda_1 \geq  \dots \geq \lambda_k \geq 0)$,
the {\em monomial symmetric function} $m_{\lambda} = m_{\lambda}(\xx)$ is the formal power series
\begin{equation}
m_{\lambda} := \sum_{1 \leq i_1 < \cdots < i_k} \sum_{(a_1, \dots, a_k)} x_{i_1}^{a_1} \cdots x_{i_k}^{a_k}
\end{equation}
where the internal sum is over all rearrangements $(a_1, \dots, a_k)$ of the parts of $\lambda$.
The {\em ring of symmetric functions} over a commutative ring $R$ is the $R$-submodule $\Lambda$ 
of the formal power series ring $R[[\xx]] = R[[x_1, x_2, \dots ]]$ given by 
$\Lambda := \mathrm{span}_R \{ m_{\lambda} \,:\, \lambda \text{ a partition} \}$.
It is easily seen that $\Lambda$ is closed under multiplication, and so forms a subring 
of $R[[\xx]]$.
We take our ground ring to
be $R = \CC(q,t)$ in this chapter, where $q$ and $t$ are variables.

The ring $\Lambda = \bigoplus_{n \geq 0} \Lambda_n$ is graded, with the degree $n$ piece
$\Lambda_n$ having basis $\{ m_{\lambda} \,:\, \lambda \vdash n \}$.
There are many other interesting bases of $\Lambda_n$ indexed naturally by partitions $\lambda \vdash n$;
we introduce those which will be of interest to us.
For $n \geq 0$, the {\em elementary, homogeneous,} and {\em power sum symmetric functions} are
given by
\begin{equation}
e_n := \sum_{i_1 < \cdots < i_n} x_{i_1} \cdots x_{i_n} \quad \quad
h_n := \sum_{i_1 \leq \cdots \leq i_n} x_{i_1} \cdots x_{i_n} \quad \quad 
p_n := \sum_{i \geq 1} x_i^n
\end{equation}
For a partition $\lambda = (\lambda_1, \lambda_2, \dots )$, 
we extend this  definition by setting 
\begin{equation}
e_{\lambda} := e_{\lambda_1} e_{\lambda_2} \cdots \quad \quad
h_{\lambda} := h_{\lambda_1} h_{\lambda_2} \cdots \quad \quad
p_{\lambda} := p_{\lambda_1} p_{\lambda_2} \cdots 
\end{equation}
Each of the sets $\{e_{\lambda} \,:\, \lambda \vdash n \}, \{ h_{\lambda} \,:\, \lambda \vdash n \},$
and $\{ p_{\lambda} \,:\, \lambda \vdash n \}$ are bases of $\Lambda_n$.

The power sum basis gives rise to an operation on formal power series called {\em plethysm}.
Let $k \geq 1$ and let $E = E(t_1, t_2, \dots )$ be a rational expression which depends on variables
$t_1, t_2, \dots $.  The {\em plethystic substitution} $p_k[E]$ of $E$ into $p_k$ is 
the rational function
\begin{equation}
p_k[E] := E(t_1^k, t_2^k, \dots )
\end{equation}
obtained by replacing each $t_i$ in $E$ with $t_i^k$.  More generally, if $F$ is any symmetric 
function, we define $F[E]$ by the rules
\begin{equation}
(F_1 + F_2)[E] := F_1[E] + F_2[E] \quad  \quad 
(F_1 \cdot F_2)[E] := F_1[E] \cdot F_2[E] \quad \quad
c[E] := c
\end{equation}
for any symmetric functions $F_1, F_2 \in \Lambda$ and all constants $c$.
Since $\{p_1, p_2, \dots \}$ is an algebraically independent generating set of $\Lambda$, the expression
$F[E]$ is 
well-defined.

For a partition 
$\lambda \vdash n$, a {\em semistandard Young tableau} of shape $\lambda$ is a filling 
$T$ of the Young diagram of $\lambda$ with positive integers which increase weakly across rows
and strictly down columns. 
For example, the tableau
\begin{equation*}
\begin{Young}
1 & 1 & 3 \\
2 & 4 & 4 \\
4
\end{Young}
\end{equation*}
is semistandard of shape $(3,3,1)$.
The {\em Schur function} $s_{\lambda}$ is given by
\begin{equation}
s_{\lambda} := \sum_T \xx^T
\end{equation}
where the sum is over all semistandard Young tableaux $T$ of shape $\lambda$ and $\xx^T$ denotes
the monomial $x_1^{c_1} x_2^{c_2} \cdots $ where $c_i$ is the number of $i$'s in $T$; the sequence
$\mu = (c_1, c_2, \dots )$ is the {\em content} of $T$.
Our example  tableau has content $(2,1,1,3)$ and contributes $x_1^2 x_2 x_3 x_4^3$ to the 
Schur function $s_{331}$.

The set $\{ s_{\lambda} \,:\, \lambda \vdash n \}$ forms the 
{\em Schur basis} of $\Lambda_n$. The {\em Hall inner product} $\langle -, - \rangle$
on $\Lambda$  is obtained by declaring the Schur basis to be orthonormal, i.e.
\begin{equation}
\langle s_{\lambda}, s_{\mu} \rangle := \delta_{\lambda,\mu}.
\end{equation}
The {\em omega involution} is the isometry $\omega$ of this inner product given by 
$\omega(s_{\lambda}) = s_{\lambda'}$, where $\lambda'$ is the partition conjugate to $\lambda$.
By construction, we have $\omega \circ \omega = \mathrm{id}$; it can also be shown that $\omega$
is a ring map $\Lambda \rightarrow \Lambda$.

The combinatorics of symmetric functions is closely related to the representation theory of the symmetric 
group.  Irreducible representations of $S_n$ over $\CC$  
are in one-to-one correspondence
with partitions of $n$.  If $\lambda \vdash n$ is a partition, write $S^{\lambda}$ for the corresponding 
$S_n$-irreducible. For any finite-dimensional representation $V$ of $S_n$, there are unique multiplicities
$c_{\lambda}$ such that $V = \bigoplus_{\lambda \vdash n} c_{\lambda} S^{\lambda}$.
The {\em Frobenius image} of $V$ is the symmetric function
\begin{equation}
\Frob(V) := \sum_{\lambda \vdash n} c_{\lambda} \cdot s_{\lambda} \in \Lambda_n
\end{equation}
obtained by replacing each irreducible $S^{\lambda}$ with the corresponding Schur function $s_{\lambda}$.

We will consider  representations of $S_n$ equipped with both single and multiple gradings.
If $V = \bigoplus_{i \geq 0} V_i$ 
is a graded $S_n$-module, the {\em graded Frobenius image} is 
\begin{equation}
\grFrob(V; q) := \sum_{i \geq 0} \Frob(V_i) \cdot q^i.
\end{equation}
Generalizing further, if $V = \bigoplus_{i,j \geq 0} V_{i,j}$ is a bigraded $S_n$-module, the 
{\em bigraded Frobenius image} is 
\begin{equation}
\grFrob(V; q,t) := \sum_{i,j \geq 0} \Frob(V_{i,j}) \cdot q^i t^j.
\end{equation}

The Frobenius map provides a dictionary between the representation theory of $S_n$ and the combinatorics
of $\Lambda$.  For example, if $V$ is an $S_n$-module and $W$ is an $S_m$-module, we have
\begin{equation}
\Frob(V \circ W) = \Frob(V) \cdot \Frob(W)
\end{equation}
where $V \circ W := \mathrm{Ind}_{S_n \times S_m}^{S_{n+m}}(V \otimes W)$ is the induction product.
The omega involution on symmetric functions corresponds to a sign twist, viz.
\begin{equation}
\Frob ( \sign \otimes V) = \omega ( \Frob (V))
\end{equation}
where $\sign$ is the 1-dimensional sign representation of $S_n$.
A highly desirable way to show that a given symmetric function $F \in \Lambda_n$ is 
{\em Schur-positive} (i.e. has Schur expansion with nonnegative integer coefficients)
is to find an $S_n$-module $V$ with $\Frob(V) = F$; this will be a recurring theme in this chapter.

\subsection{Hall-Littlewood and Macdonald polynomials}
The symmetric functions considered in the previous subsection are rather specialized members 
of a rich hierarchy of symmetric functions involving the auxiliary parameters $q$ and $t$. 
Two families in this hierarchy will be important for us: the Hall-Littlewood and Macdonald polynomials.

The {\em Hall-Littlewood basis}
$\widetilde{H}_{\mu}(\xx;q)$ of $\Lambda$
depends on the parameter $q$.
We define these polynomials 
combinatorially  using the cocharge statistic of Lascoux and Sch\"utzenberger
 \cite{LS}.
We first describe cocharge on the level of words, and then extend to the case of tableaux.

Given any entry $w_j$ of a word $w$ and a positive integer $k$ which appears in $w$, write 
$\mathrm{cprev}(k, w_j)$ for the cyclically previous $k$ before $w_j$ in $w$.
More precisely, 
$\mathrm{cprev}(k, w_j)$ is the rightmost $k$ appearing to the left of $w_j$ (if such a $k$ exists),
and the rightmost $k$ in $w$ otherwise.
If $k$ does not appear in $w$, we leave $\mathrm{cprev}(k, w_j)$ undefined.

Let $\mu \vdash n$ be a partition with $k$ parts and let $w = w_1 \dots w_n$ be a word with content $\mu$.
That is, the word $w$ has $\mu_j$ copies of the letter $j$ for each $j$.
We decompose $w$ as a disjoint union  of permutations  $w^{(1)}, w^{(2)}, \dots $ as follows.
Let $w_{i_1} = 1$ be the rightmost $1$ in $w$ and recursively define $i_2, i_3, \dots, i_k$ by
\begin{equation*}
w_{i_{j+1}} = \mathrm{cprev}(j+1, w_{i_j})
\end{equation*}
for $j = 1, 2, \dots, k-1$. 
The {\em first standard subword} $w^{(1)}$ of $w$ is the subword of $w$ consisting of the entries 
$w_{i_1}, w_{i_2}, \dots, w_{i_k}$.
The word $w^{(1)}$ is well-defined because $w$ has partition content.
Extending this idea, the {\em standard subword decomposition} $w^{(1)}, w^{(2)}, w^{(3)} \dots $ of $w$
is defined by letting $w^{(i)}$ be the first standard subword of the word obtained from $w$ by 
erasing the letters in $w^{(1)}, w^{(2)}, \dots, w^{(i-1)}$.

An example should clarify these notions. Let $w = 4224511133$, a word of partition content. 
We bold the rightmost 1 in $w$, then bold the cyclically previous 2, then bold the cyclically previous 3, and so on
resulting in
$4 2 {\bf 2} {\bf 4} {\bf 5} 1 1 {\bf 1} 3 {\bf 3}$.
The permutation $w^{(1)} = [2, 4, 5, 1, 3]$ formed by the bolded letters is the first standard subword of $w$.
Erasing these bolded letters yields the smaller word $42113$. Repeating this procedure yields
${\bf 4} {\bf 2} 1 {\bf 1} {\bf 3}$ so that the second standard subword is $w^{(2)} = [4, 2, 1, 3]$.
Erasing the bolded letters once more yields the one-letter word $1$, so that $w^{(3)} = [1]$.

Let $w$ be a word with partition content. The {\em cocharge} $\cocharge(w)$ is defined as follows.
If $v \in S_n$ is a permutation of size $n$, the {\em cocharge sequence} $(cc_1, \dots, cc_n)$ of $v$
is defined by the initial condition $c_1 := 0$ and the recursion
\begin{equation}
cc_{i+1} := \begin{cases}
cc_i & \text{if $v^{-1}(i+1) > v^{-1}(i)$}  \\
cc_i + 1 & \text{if $v^{-1}(i+1) < v^{-1}(i)$}
\end{cases}
\end{equation}
and the cocharge of $v$ is the sum $\cocharge(v) := cc_1 + \cdots cc_n$ of its cocharge sequence.
We extend the definition of cocharge to the partition-content word $w$ by setting
\begin{equation}
\cocharge(w) := \cocharge(w^{(1)}) + \cocharge(w^{(2)}) + \cdots
\end{equation}
where $w^{(1)}, w^{(2)}, \dots $ is the standard subword decomposition of $w$.
For the word $w$ in the previous paragraph, we have
\begin{equation*}
\cocharge(4224511133) = 
\cocharge [2,4,5,1,3] + \cocharge [4,2,1,3] + \cocharge[1] = 6 + 4 + 0 = 10.
\end{equation*}

We are finally ready to define the polynomials $\widetilde{H}_{\mu}(\xx;q)$.
The {\em reading word} $\mathrm{read}(T)$ of a semistandard tableau $T$ is obtained by reading its entries 
from left to right within rows, proceeding from bottom to top. 
If $\mu \vdash n$ is a partition, the {\em (modified) Hall-Littlewood polynomial} $\widetilde{H}_{\mu}(\xx;q)$ has Schur
expansion
\begin{equation}
\widetilde{H}_{\mu}(\xx;q) := \sum_T q^{\cocharge(\mathrm{read}(T))} \cdot s_{\mathrm{shape}(T)}
\end{equation}
where the sum is over all semistandard tableaux $T$ of content $\mu$.  
For example, if $\mu = (3,2,2,2,1)$, the semistandard tableau
\begin{equation*}
\begin{Young}
1 & 1 & 1 & 3 & 3 \cr
2 & 2 & 4 & 5 \cr
4
\end{Young}
\end{equation*}
has content $\mu$ and reading word $4224511133$. Our earlier computations show that this tableau contributes
$q^{10} \cdot s_{541}$ to $\widetilde{H}_{\mu}(\xx;q)$.

Our final and most involved basis is that of {\em (modified) Macdonald polynomials}, 
depending on two parameters $q$ and $t$.
These polynomials may be characterized plethystically as follows.

Let $\geq$ be the {\em dominance order} on partitions given by 
\begin{center}
$\lambda \geq \mu$ if and only if 
$\lambda_1 + \lambda_2 + \cdots + \lambda_i \geq \mu_1 + \mu_2+ \cdots+ \mu_i$
for all $i$. 
\end{center}
Macdonald  \cite{MacdonaldSymmetric} showed that there is
a unique family $\{ \widetilde{H}_{\mu}(\xx;q,t) \,:\, \mu \vdash n \}$ of symmetric functions 
whose Schur expansions satisfy the {\em triangularity axioms}
\begin{equation}
\begin{cases}
\widetilde{H}_{\mu}[\xx(1-q);q,t] = \sum_{\lambda \geq \mu} a_{\lambda \mu}(q,t) \cdot s_{\lambda} \\
\widetilde{H}_{\mu}[\xx(1-t);q,t] = \sum_{\lambda \geq \mu'} b_{\lambda \mu}(q,t) s_{\lambda}
\end{cases}
\end{equation}
and which satisfy the {\em normalization axiom}
\begin{equation}
\langle \widetilde{H}_{\mu}[\xx;q,t], s_{(n)} \rangle = 1.
\end{equation}
Here we use the standard shorthand
\begin{equation}
\xx(1-q) := x_1 + x_2 + \cdots - q x_1 - q x_2 - \cdots \quad \text{and} \quad 
\xx(1-t) := x_1 + x_2 + \cdots - t x_1 - t x_2 - \cdots
\end{equation}
for expressions inside plethystic brackets.
Haglund, Haiman, and Loehr  \cite{HHL} gave a combinatorial formula for the polynomials 
$\widetilde{H}_{\mu}[\xx;q,t]$ involving tableaux.
Haiman  \cite{HaimanHilbert} proved that the $\widetilde{H}_{\mu}[\xx;q,t]$ are the  Frobenius images
$\grFrob(V_{\mu};q,t)$ of a certain bigraded $S_n$-module $V_{\mu}$, and hence Schur-positive.

\subsection{The diagonal coinvariant ring $DR_n$}
Let $\xx_n = (x_1, \dots, x_n)$ and $\yy_n = (y_1, \dots, y_n)$ be two lists of $n$ variables and 
let $\CC[\xx_n, \yy_n]$ be the polynomial ring over these variables.
We consider the {\em diagonal} action of $S_n$ on $\CC[\xx_n, \yy_n]$ given by
\begin{equation}
w \cdot x_i := x_{w(i)} \quad \quad w \cdot y_i := y_{w(i)} \quad \quad w \in S_n, \,\, 1 \leq i \leq n
\end{equation}
and let $\CC[\xx_n, \yy_n]^{S_n}_+ \subseteq \CC[\xx_n, \yy_n]$ be the space of $S_n$-invariants
with vanishing constant term.

In the early 1990s, Garsia and Haiman \cite{GH, HaimanQuotient}
 initiated the study of the {\em diagonal coinvariant ring}.
This is the quotient 
\begin{equation}
DR_n := \CC[\xx_n, \yy_n] / \langle \CC[\xx_n, \yy_n]^{S_n}_+ \rangle 
\end{equation}
obtained by modding out $\CC[\xx_n, \yy_n]$ by the ideal generated by the $S_n$-invariants with vanishing 
constant term. 
By considering $x$-degree and $y$-degree separately, the ring $DR_n = \bigoplus_{i, j \geq 0} (DR_n)_{i,j}$
is a doubly graded $S_n$-module.

Setting the $y$-variables to zero in $DR_n$, we recover the classical $S_n$-coinvariant ring 
\begin{equation}
R_n := \CC[x_1, \dots, x_n] / \langle \CC[x_1, \dots, x_n]^{S_n}_+ \rangle = \CC[x_1, \dots, x_n] / \langle e_1, e_2, \dots, e_n \rangle
\end{equation}
obtained from $\CC[x_1, \dots, x_n]$ by modding out by the elementary symmetric polynomials.
The singly graded $S_n$-module $R_n$ is among the most well-studied representations in algebraic 
combinatorics.
As an ungraded module, we have $R_n \cong_{S_n} \CC[S_n]$, so that $R_n$ is a graded refinement of the 
regular representation of $S_n$.
The ring $R_n$ has many interesting bases indexed by permutations $w \in S_n$ \cite{Artin, GS}
and presents
the
cohomology $H^{\bullet}(\Fl(n))$ of the flag variety \cite{Borel}.

The quotient ring $DR_n$ has remarkable  properties.
Using the algebraic geometry of the Hilbert scheme of $n$ points in $\CC^2$, 
Haiman proved \cite{HaimanVanish} that $DR_n$ has vector space dimension
$(n+1)^{n-1}$.
No elementary proof of this dimension formula is known.

The ungraded $S_n$-structure of $DR_n$ is governed by size $n$
{\em parking functions}; these are 
sequences $(a_1, \dots, a_n)$ of positive integers whose nondecreasing rearrangement
$(b_1 \leq \cdots \leq b_n)$ satisfies $b_i \leq i$ for all $i$.
The symmetric group $S_n$ acts on the set $\Park_n$ of length $n$ parking functions
by subscript permutation.  A beautiful
combinatorial argument  called the {\em Cycle Lemma} gives the enumeration $|\Park_n| = (n+1)^{n-1}$.
Haiman proved that 
\begin{equation}
\label{ungraded-dr-isomorphism}
DR_n \cong \CC[\Park_n] \otimes \sign
\end{equation}
as ungraded $S_n$-modules.  
Carlsson and Oblomkov \cite{CO} used the theory of affine Springer fibers to find a monomial basis
of $DR_n$ naturally indexed by parking functions. Their work gives an alternative (still geometric)
proof of the dimension
formula
$\dim DR_n = (n+1)^{n-1}$.

Haiman refined the isomorphism \eqref{ungraded-dr-isomorphism} by giving a description of
the bigraded Frobenius image $\grFrob(DR_n; q,t)$ of the diagonal coinvariants, where $q$ tracks
$x$-degree and $t$ tracks $y$-degree.
In order to state Haiman's result, we will need the {\em nabla operator} on symmetric functions 
introduced by F. Bergeron, Garsia, Haiman, and Tesler \cite{BGHT}.
This is the Macdonald eigenoperator $\nabla: \Lambda \rightarrow \Lambda$ characterized by 
\begin{equation}
\nabla: \widetilde{H}_{\mu}(\xx;q,t) \mapsto q^{\kappa(\mu)} t^{\kappa(\mu')} \cdot \widetilde{H}_{\mu}(\xx;q,t)
\end{equation}
where $\kappa$ is the partition statistic
$\kappa(\lambda) := \sum_{i \geq 1} (i-1) \cdot \lambda_i$. For 
example, suppose $\mu = (3,2)$ so that $\mu' = (2,2,1)$.
We compute $\kappa(\mu) = 2$ and $\kappa(\mu') = 4$
so that
\begin{equation*}
\nabla: \widetilde{H}_{32}(\xx;q,t) \mapsto q^2 t^4 \cdot \widetilde{H}_{32}(\xx;q,t).
\end{equation*}
Haiman used algebraic geometry  \cite{HaimanVanish} to prove
\begin{equation}
\label{dr-bigraded-nabla}
\grFrob(DR_n;q,t) = \nabla e_n,
\end{equation}
cementing a connection between the 
representation theory of $DR_n$ and the combinatorics of $\Lambda$.

Thanks to Haiman's theorem~\eqref{dr-bigraded-nabla}, finding the bigraded $S_n$-isomorphism type of $DR_n$ 
amounts to finding the coefficients of the Schur expansion 
$\nabla e_n = \sum_{\lambda \vdash n} c_{\lambda}(q,t) \cdot s_{\lambda}$. The polynomial 
$c_{\lambda}(q,t)$ is the bigraded multiplicity of $S^{\lambda}$ in $DR_n$, so has nonnegative
coefficients and satisfies the $q,t$-symmetry $c_{\lambda}(q,t) = c_{\lambda}(t,q)$.
Finding explicit manifestly positive formulas for the $c_{\lambda}(q,t)$ has proven daunting.
When $\lambda = (a,1^b)$ is a hook, Haglund's {\em $q,t$-Schr\"oder Theorem} 
\cite{HaglundSchroder}
describes $c_{\lambda}(q,t)$ as the bivariate generating function $\mathrm{Sch}_{\lambda}(q,t)$
 for a certain pair 
of statistics on Motzkin paths. Although $\mathrm{Sch}_{\lambda}(q,t)$ 
is manifestly positive, it is not manifestly
symmetric; no combinatorial proof of $q,t$-symmetry 
$\mathrm{Sch}_{\lambda}(q,t) = \mathrm{Sch}_{\lambda}(t,q)$ is known. 
For general partitions $\lambda \vdash n$, there is not even a conjectural combinatorial expression
for $c_{\lambda}(q,t)$.

When finding the Schur expansion of a symmetric function $F \in \Lambda$ is too difficult, it can be easier
to find its monomial expansion. Using a dazzling array of symmetric function machinery, in 2015 
Carlsson and Mellit \cite{CM}
proved the {\em Shuffle Theorem} 
\begin{equation}
\label{shuffle-theorem}
\nabla e_n = \sum_{P} q^{\area(P)} t^{\dinv(P)} \xx^P
\end{equation}
where the sum is over size $n$ word parking functions $P$, thus giving a combinatorial
monomial expansion of $\nabla e_n$. 
The ingredients in the right hand side of Equation~\eqref{shuffle-theorem} may be described as follows.

A {\em Dyck path} of size $n$ is a lattice path from $(0,0)$ to $(n,n)$ which never sinks below the diagonal
$y = x$.  A {\em word parking function} $P$ is obtained from a Dyck path by labeling its vertical steps with 
nonnegative integers which increase going up vertical runs.
To any word parking function $P$, we associate a monomial
weight  $\xx^P = x_1^{c_1} x_2^{c_2} \cdots $ where
$c_i$ is the number of copies of $i$ in $P$.
In the example below we have $n = 5$ and  $\xx^P  = x_1 x_2^2 x_3 x_6$.

\begin{center}
\begin{tikzpicture}[scale=0.6]
(8,0) rectangle +(5,5);
\draw[help lines] (8,0) grid +(5,5);
\draw[dashed] (8,0) -- +(5,5);
\coordinate (prev) at (8,0);
\draw [color=black, line width=2] (8,0)--(8,2)--(9,2)--(9,3)--(11,3)--(11,4)--(12,4)--(12,5)--(13,5);

\draw (8,0) node [scale=0.5, circle, draw,fill=black]{};
\draw (13,5) node [scale=0.5, circle, draw,fill=black]{};

\node[align=left,scale=1.4] at (8.5,0.5) {3};
\node[align=left,scale=1.4] at (8.5,1.5) {6};
\node[align=left,scale=1.4] at (9.5,2.5) {2};
\node[align=left,scale=1.4] at (11.5,3.5) {1};
\node[align=left,scale=1.4] at (12.5,4.5) {2};
 \end{tikzpicture}
\end{center}

If $P$ is a size $n$ word parking function, the {\em area} of $P$ is the number $\area(P)$
of full boxes between $P$ and the diagonal.  In particular,
the area of $P$ depends only on the Dyck path underlying $P$,
not on its labeling with positive integers.
In the above example we have $\area(P) = 2$.

The other statistic $\dinv(P)$ appearing in \eqref{shuffle-theorem}
depends on the labeling of $P$ (not just the Dyck path) and is more involved.
A pair of labels $a < b$ in $P$ form a {\em diagonal inversion} if 
\begin{itemize}
\item  $a$ and $b$ appear in the same diagonal of $P$, and $a$ appears in a lower row than $b$
(this is a {\em primary diagonal inversion}), or
\item $a$ appears one diagonal below $b$ in $P$, and $a$ appears in a higher row than $b$
(this is a {\em secondary diagonal inversion}). 
\end{itemize}
In the above example, the single primary diagonal inversion is $(1,2)$ on the main diagonal.
We have the secondary diagonal inversions $(1,6), (2,6),$ and $(1,2)$ involving the main diagonal
and the `superdiagonal'.
We let $\dinv(P)$ be the total number of primary and secondary diagonal inversions, so that $\dinv(P) = 4$
in our case.

The  identity
\eqref{shuffle-theorem} was conjectured by 
Haglund, Haiman, Loehr, Remmel, and Ulyanov \cite{HHLRU} in 2005.
In the decade between its conjecture and its proof, many more general symmetric 
function identities were conjectured \cite{BGLX, HMZ, HRW}
in an attempt to place 
\eqref{shuffle-theorem} in a position for inductive attack.
Like \eqref{shuffle-theorem} itself, these
 identities have two sides: a `symmetric function side' involving the action of operators such as 
$\nabla$ on the ring $\Lambda$ and a `combinatorial side' involving statistics 
on combinatorial objects.
The Carlsson-Mellit proof  \cite{CM} of \eqref{shuffle-theorem} went through one such refinement:
the {\em Compositional Shuffle Conjecture} of Haglund, Morse, and Zabrocki \cite{HMZ}.
Shortly before the work of \cite{CM}, Haglund, Remmel and Wilson \cite{HRW} 
formulated another such refinement called  the {\em Delta Conjecture}. 
This refinement proved more difficult than the Shuffle Theorem itself; its `Rise Version'
resisted proof for 
five years and its `Valley Version' remains open.
The Delta Conjecture involves a family of symmetric function operators generalizing $\nabla$.

\subsection{Delta operators}
Given a partition $\mu$, define a polynomial $B_{\mu} = \sum_{(i,j)} q^{i-1} t^{j-1}$, where the sum is over all 
matrix coordinates $(i,j)$ of cells in $\mu$.  For example, if $\mu = (3,2)$ the cells of $\mu$ 
contribute the monomials
\begin{equation*}
\begin{Young}
1 & q & q^2 \cr t & qt 
\end{Young}
\end{equation*}
so that $B_{\mu} = 1 + q + q^2 + t + qt$. If $F \in \Lambda$ is any symmetric function,
the {\em delta operator} $\Delta_F: \Lambda \rightarrow \Lambda$ indexed by $F$
is the Macdonald eigenoperator
\begin{equation}
\Delta_F: \widetilde{H}_{\mu}(\xx;q,t) \mapsto F[B_{\mu}] \cdot \widetilde{H}_{\mu}(\xx;q,t)
\end{equation}
or, in non-plethystic language, 
\begin{equation}
\Delta_F: \widetilde{H}_{\mu}(\xx;q,t) \mapsto F( \dots, q^{i-1} t^{j-1}, \dots) \cdot \widetilde{H}_{\mu}(\xx;q,t)
\end{equation}
where $(i,j)$ range over all cells of $\mu$.
For example, we have
\begin{equation*}
\Delta_F: \widetilde{H}_{32}(\xx;q,t) \mapsto F(1, q, q^2, t, qt) \cdot \widetilde{H}_{32}(\xx;q,t)
\end{equation*}
where the notation $F(1, q, q^2, t, qt)$ means that
the variables  $x_1, x_2, x_3, x_4, x_5$ in $F(x_1, x_2, \dots )$  are set to $1, q, q^2, t, qt$ and all other variables
$x_6, x_7, \dots$ are set to zero. 
We will use a {\em primed} version of the delta operators 
$\Delta'_F: \Lambda \rightarrow \Lambda$ characterized by
\begin{equation}
\Delta'_F: \widetilde{H}_{\mu}(\xx;q,t) \mapsto F[B_{\mu} - 1] \cdot \widetilde{H}_{\mu}(\xx;q,t)
\end{equation}
so that the arguments $F( \dots, q^{i-1} t^{j-1}, \dots )$ of the eigenvalue of $\widetilde{H}_{\mu}(\xx;q,t)$
do not contain the northwest cell of $\mu$ with coordinates $(1,1)$. In our example, this corresponds
to the filling
\begin{equation*}
\begin{Young}
\cdot & q & q^2 \cr t & qt 
\end{Young}
\end{equation*}
and we have
\begin{equation*}
\Delta'_F: \widetilde{H}_{32}(\xx;q,t) \mapsto F(q, q^2, t, qt) \cdot \widetilde{H}_{\mu}(\xx;q,t).
\end{equation*}

The delta operators were defined by F. Bergeron and Garsia.
 A quick check on the Macdonald basis shows that 
 \begin{equation}
 \Delta_{e_n} = \Delta'_{e_{n-1}} = \nabla
 \end{equation}
 as operators on the space $\Lambda_n$ of degree $n$ symmetric functions,
 so the Shuffle Theorem gives the monomial expansion 
 of $\Delta'_{e_{n-1}} e_n$.
 For $k \leq n$, the Delta Conjecture predicts monomial expansions
of the more general symmetric functions $\Delta'_{e_{k-1}} e_n$.
 Roughly speaking, these expansions are obtained by refining the parking
 function statistics $\area$ and $\dinv$.

The statistic $\area(P)$ admits a natural refinement.
If $P$ is a size $n$ word parking function and $1 \leq i \leq n$,
we set
\begin{equation}
a_i(P) := \text{the number of full boxes in row $i$ between $P$ and the diagonal.}
\end{equation}
The numbers $a_1(P), \dots, a_n(P)$ depend only on the Dyck path underlying $P$, not on the choice of labeling.
We have 
$\area(P) = a_1(P) + \cdots + a_n(P)$.

Refining $\dinv(P)$ is more complicated.
For $1 \leq i \leq n$, define a number $d_i(P)$ by
\begin{multline}
d_i(P) := |\{i < j \leq n \,:\, a_i(P) = a_j(P), \ell_i(P) < \ell_j(P)\}|  \\
+ |\{i < j \leq n \,:\, a_i(P) = a_j(P) + 1, \ell_i(P) > \ell_j(P)\}|
\end{multline}
where $\ell_i(P)$ is the label in row $i$ of $P$.
Roughly speaking, $d_i(P)$ counts diagonal inversions in $P$ which `originate in' row $i$.
We have
$\dinv(P) = d_1(P) + \cdots + d_n(P)$
For our example parking function, the values of $a_i(P)$ and $d_i(P)$ are as 
follows.

\begin{center}
\begin{tikzpicture}[scale=0.6]
(8,0) rectangle +(5,5);
\draw[help lines] (8,0) grid +(5,5);
\draw[dashed] (8,0) -- +(5,5);
\coordinate (prev) at (8,0);
\draw [color=black, line width=2] (8,0)--(8,2)--(9,2)--(9,3)--(11,3)--(11,4)--(12,4)--(12,5)--(13,5);

\draw (8,0) node [scale=0.5, circle, draw,fill=black]{};
\draw (13,5) node [scale=0.5, circle, draw,fill=black]{};

\node[align=left,scale=1.4] at (8.5,0.5) {3};
\node[align=left,scale=1.4] at (8.5,1.5) {6};
\node[align=left,scale=1.4] at (9.5,2.5) {2};
\node[align=left,scale=1.4] at (11.5,3.5) {1};
\node[align=left,scale=1.4] at (12.5,4.5) {2};

\node[align=left,scale=1.4] at (14.5,5.5) {$i$};
\node[align=left,scale=1.4] at (14.5,4.5) {$5$};
\node[align=left,scale=1.4] at (14.5,3.5) {$4$};
\node[align=left,scale=1.4] at (14.5,2.5) {$3$};
\node[align=left,scale=1.4] at (14.5,1.5) {$2$};
\node[align=left,scale=1.4] at (14.5,0.5) {$1$};

\node[align=left,scale=1.4] at (16,5.4) {$a_i$};
\node[align=left,scale=1.4] at (16,4.5) {$0$};
\node[align=left,scale=1.4] at (16,3.5) {$0$};
\node[align=left,scale=1.4] at (16,2.5) {$1$};
\node[align=left,scale=1.4] at (16,1.5) {$1$};
\node[align=left,scale=1.4] at (16,0.5) {$0$};

\node[align=left,scale=1.4] at (17.5,5.5) {$d_i$};
\node[align=left,scale=1.4] at (17.5,4.5) {$0$};
\node[align=left,scale=1.4] at (17.5,3.5) {$1$};
\node[align=left,scale=1.4] at (17.5,2.5) {$1$};
\node[align=left,scale=1.4] at (17.5,1.5) {$2$};
\node[align=left,scale=1.4] at (17.5,0.5) {$0$};
 \end{tikzpicture}
\end{center}

The Shuffle Theorem gives an expression for $\nabla e_n$ in terms of the statistics 
$\area = a_1 + \cdots + a_n$ and $\dinv = d_1 + \cdots + d_n$.
To get an expression for $\Delta'_{e_{k-1}} e_n$, we cancel out the contributions of some of the
 $a_1, \dots, a_n, d_1, \dots, d_n$ to these statistics.
If $P$ is a labeled Dyck path of size $n$, the set $\Val(P)$ of {\em contractible valleys} of $P$
is
\begin{multline}
\Val(P) := \{2 \leq i \leq n \,:\, a_i(P) < a_{i-1}(P)\} \\  \cup \{2 \leq i \leq n \,:\, a_i(P) = a_{i-1}(P), \ell_i(P) > \ell_{i-1}(P)\}.
\end{multline}
In other words, a valley (i.e. an east step followed by a north step) in row $i$ is contractible if moving this valley
one box east gives a valid labeled Dyck path.
in the example above we have $\Val(P) = \{4,5\}$.

\begin{conjecture} (Haglund-Remmel-Wilson \cite{HRW})
{\em (The Delta Conjecture)}
For positive integers $k \leq n$,
\begin{align}
\label{riseform}
\Delta'_{e_{k-1}} e_n
&= \{z^{n-k}\} \left[ \sum_{P \in \mathcal{LD}_n} q^{\mathrm{dinv}(P)} t^{\mathrm{area}(P)}
\prod_{i \, : \, a_i(P) > a_{i-1}(P)} \left( 1 + z/t^{a_i(P)} \right) \xx^P \right] \\
&= \{z^{n-k}\} \left[ \sum_{P \in \mathcal{LD}_n} q^{\mathrm{dinv}(P)} t^{\mathrm{area}(P)}
\prod_{i \in \mathrm{Val}(P)} \left( 1 + z/q^{d_i(P) + 1} \right) \xx^P \right].
\end{align}
Here the operator $\{z^{n-k}\}$ extracts the coefficient of $z^{n-k}$.
\end{conjecture}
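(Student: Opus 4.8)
The plan is to establish the Rise Version \eqref{riseform}; the Valley Version --- the second displayed equality in the conjecture --- turns out to be the genuine obstruction, and I comment on it at the end. Write $C_{n,k}(q,t;\xx)$ for the right-hand side of the rise formula. Extracting the coefficient of $z^{n-k}$ shows that the Rise Version is equivalent to the single symmetric function identity
\begin{equation}
\sum_{j=0}^{n-1} z^{j}\, \Delta'_{e_{n-1-j}}\, e_n \;=\; \sum_{P \in \mathcal{LD}_n} q^{\dinv(P)}\, t^{\area(P)} \prod_{i \,:\, a_i(P) > a_{i-1}(P)} \left( 1 + \frac{z}{t^{a_i(P)}} \right) \xx^{P}.
\end{equation}
The first step is to transport the left side into a more malleable framework. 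Using the \emph{Theta operators} $\Theta_f$ of D'Adderio--Iraci--Vanden Wyngaerd, one has an identity of the form $\Delta'_{e_{k-1}} e_n = \Theta_{e_{n-k}}\bigl(\nabla e_k\bigr)$ for all $1 \le k \le n$ --- recovering $\nabla e_n$ when $k = n$, consistently with $\Delta'_{e_{n-1}} = \nabla$ on $\Lambda_n$ --- so (reindexing by $j = n-k$) the left side becomes $\sum_{j=0}^{n-1} z^{j}\, \Theta_{e_j}\, \nabla e_{n-j}$. The problem is thereby reduced to a statement purely about the plethystic calculus of $\nabla$ and the $\Theta_{e_j}$.

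Next I would show that both sides obey a common recursion in $n$. On the combinatorial side, decompose a labeled Dyck path $P \in \mathcal{LD}_n$ at its successive returns to the main diagonal --- the decomposition underlying the Carlsson--Mellit proof of the Shuffle Theorem --- and check that the extra weight $\prod\bigl(1 + z/t^{a_i(P)}\bigr)$ is compatible with it, since attaching a new primitive block shifts the areas $a_i(P)$ in a controlled way. To make the induction close one should, following D'Adderio--Mellit, pass to a \emph{compositional} refinement: for each composition $\alpha$ of $n$ recording the return points there are symmetric functions on both sides indexed by $\alpha$, and the recursion expresses the $\alpha$-version in terms of shorter compositions, with \eqref{riseform} recovered by specialization. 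On the symmetric-function side the matching recursion for $\sum_j z^j \Theta_{e_j}\nabla e_{n-j}$ is proved from a web of commutation relations among the $\Theta_{e_j}$, multiplication by $e_1$, the skewing operator $e_1^\perp$, and the $\nabla$-conjugated raising and lowering operators of the Carlsson--Mellit Dyck path algebra. The induction is anchored by $n = 1$ and by the two extremes of the $z$-grading, where \eqref{riseform} collapses to the Shuffle Theorem \eqref{shuffle-theorem} (coefficient of $z^0$) and to the evaluation $\Delta'_{e_0} e_n = e_n$ (coefficient of $z^{n-1}$, supported on the single labeled path $N^n E^n$ with strictly increasing labels, after the powers of $t$ cancel).

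The hard part is the symmetric-function-side recursion: establishing the commutation relations among the Theta and Macdonald operators. This is far from routine --- it demands delicate manipulation of the modified Macdonald polynomials $\widetilde{H}_\mu(\xx;q,t)$, their Cauchy reproducing kernels, and Tesler-type recursions, and it is precisely the content that the Dyck-path-algebra machinery of Carlsson--Mellit, and its extension by D'Adderio--Mellit, was built to supply. An alternative route, due to Blasiak--Haiman--Morse--Pun--Seelinger, sidesteps the operator recursion: one writes $\Delta'_{e_{k-1}} e_n$ as an explicit, manifestly positive sum of LLT polynomials (``Catalanimals''), then applies the combinatorial monomial expansion of LLT polynomials and matches terms with the rise side. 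Either way, the Valley Version lies beyond these methods: there is no known Theta-type recursion or Catalanimal expansion compatible with the valley data $\Val(P)$ and $d_i(P)$, and even the purely combinatorial equivalence of the rise and valley sides is open. So the second equality in the displayed conjecture is the true obstacle, and the plan above proves only what is currently within reach. (The much easier $t = 0$ specialization can instead be obtained representation-theoretically from the ring $R_{n,k}$ of the excerpt, whose graded Frobenius image $\grFrob(R_{n,k};q)$ equals, up to a standard twist, $\Delta'_{e_{k-1}} e_n \big|_{t=0}$ --- but this recovers only one slice of the statement.)
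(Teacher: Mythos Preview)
The statement you are asked to prove is a \emph{Conjecture} in the paper, not a theorem: the paper gives no proof of it. Instead, the paper surveys its status in the paragraphs immediately following the statement, recording that the Rise Version was proven independently by D'Adderio--Mellit (via a compositional refinement using $\Theta$-operators) and by Blasiak--Haiman--Morse--Pun--Seelinger (via the Schiffmann algebra action), while the Valley Version remains open and $\Val_{n,k}(\xx;q,t)$ is not even known to be symmetric in $\xx$.

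Your proposal is therefore not a proof of the full conjecture --- and you acknowledge as much --- but it is an accurate and well-informed summary of exactly what the paper itself reports. Your outline of the D'Adderio--Mellit approach (the identity $\Delta'_{e_{k-1}} e_n = \Theta_{e_{n-k}} \nabla e_k$, the compositional refinement, the Dyck-path-algebra recursion) and your mention of the BHMPS alternative are both correct and match the paper's discussion. Your assessment that the Valley Version is the genuine obstruction, with no known analogous machinery, is also exactly what the paper says. The only thing to flag is that calling this a ``proof proposal'' overstates what is possible: you have correctly identified that the second equality is open, so the best anyone can currently do is prove the first equality, and your sketch of how that goes is sound.
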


We denote by $\Rise_{n,k}(\xx;q,t)$ and $\Val_{n,k}(\xx;q,t)$ the two combinatorial expressions
on the right-hand side, so the Delta Conjecture may be written more succinctly as 
\begin{equation}
 \label{delta-theorem}
 \Delta'_{e_{k-1}} e_n = \Rise_{n,k}(\xx;q,t) = \Val_{n,k}(\xx;q,t).
 \end{equation}
The expression $\Rise_{n,k}$ is obtained from the decomposition 
$\area = a_1 + \cdots + a_n$ whereas $\Val_{n,k}$ is obtained from
$\dinv = d_1 + \cdots + d_n$.
 The Delta Conjecture specializes to the Shuffle Theorem when $k = n$.

 There has been substantial progress on the Delta Conjecture since its introduction in \cite{HRW}.
 The `Rise Version', i.e. the equality 
 $\Delta'_{e_{k-1}} e_n = \Rise_{n,k}(\xx;q,t)$ was proven independently 
 by D'Adderio-Mellit \cite{DM} and
 Blasiak-Haiman-Morse-Pun-Seelinger \cite{BHMPS} using different methods.
D'Adderio and Mellit proved a refined  `compositional' version of the 
 equality  $\Delta'_{e_{k-1}} e_n = \Rise_{n,k}(\xx;q,t)$
 conjectured by D'Adderio-Iraci-Wyngaerd \cite{DIW} involving `$\Theta$-operators'
 on symmetric functions. In \cite{DM} these $\Theta$-operators are used to promote the Carlsson-Mellit proof
 of the Shuffle Theorem to a proof of the Rise Version of the Delta Conjecture.
 Blasiak et. al. proved an `extended' version of $\Delta'_{e_{k-1}} e_n = \Rise_{n,k}(\xx;q,t)$ which gives 
 a monomial expansion of 
  $\Delta_{h_r} \Delta'_{e_{k-1}} e_n$. The proof in \cite{BHMPS} uses an action of
 Schiffmann algebra $\mathcal{E}$ on the ring of symmetric functions.
 Neither result in \cite{DM} or \cite{BHMPS} contains the other.
 
Far less is known about the `Valley Version' 
 $\Delta'_{e_{k-1}} e_n = \Val_{n,k}(\xx;q,t)$ of the Delta Conjecture.
It is not even known whether $\Val_{n,k}(\xx;q,t)$ is symmetric in the $\xx$-variables.
However, the Valley Version is proven when one of the variables $q, t$ is set to zero. Combining results in
 \cite{GHRY, HRW, RhoadesOSP},
we have
\begin{equation}
\label{valley-delta-at-zero}
\Delta'_{e_{k-1}} e_n \mid_{t = 0} = \Val_{n,k}(\xx;q,0) = \Val_{n,k}(\xx;0,q).
\end{equation}
We focus on $t = 0$ specialization $\Delta'_{e_{k-1}} e_n \mid_{t = 0}$ for most of this
chapter; in the final section we look at $\Delta'_{e_{k-1}} e_n$ itself.

\section{Quotient rings}
\label{Quotient}

\subsection{The ring $R_{n,k}$}
The motivation for the Shuffle Theorem was to develop a combinatorial understanding of $\nabla e_n$,
the bigraded Frobenius image of the diagonal coinvariants $DR_n$.
In turn, the ring $DR_n$ was initially motivated by the classical coinvariant ring $R_n$ with its 
ties to the flag variety $\Fl_n$.  As a significant member of a family of (sometimes conjectural) identities
developed to prove the Shuffle Theorem, it is natural to ask whether there is an analogous 
algebraic and geometric theory tied to $\Delta_{e_{k-1}} e_n$.
To this end, Haglund, Rhoades, and Shimozono \cite{HRS} introduced the following quotient rings.

\begin{defn}
\label{rnk-defn}
Let $k \leq n$ be positive integers. We define an ideal $I_{n,k} \subseteq \CC[x_1, \dots, x_n]$ by 
\begin{equation*}
I_{n,k} := \langle e_n, e_{n-1}, \dots, e_{n-k+1}, x_1^k, x_2^k, \dots, x_n^k \rangle
\end{equation*}
and denote the corresponding quotient ring by
\begin{equation*}
R_{n,k} := \CC[x_1, \dots, x_n]/I_{n,k}
\end{equation*}
\end{defn}

The ideal $I_{n,k}$ is homogeneous, so $R_{n,k}$ is a graded ring. Since the generating set
of $I_{n,k}$ is stable under the action of $S_n$, the quotient $R_{n,k}$ is a graded $S_n$-module.
When $k = 1$, we recover the ground field
 $R_{n,1} = \CC$ since  $x_1, \dots, x_n \in I_{n,1}$.  When $k = n$, it is not hard to see that the variable
 powers 
 $x_i^n$ are in the classical coinvariant ideal $I_n = \langle e_1, \dots, e_n \rangle$,
 so that $I_{n,n} = I_n$.


The algebra $R_n$ and geometry $\Fl(n)$ are governed by the combinatorics
of $S_n$.
The following combinatorial objects play an analogous role for $R_{n,k}$ and $X_{n,k}$.

\begin{defn}
\label{fubini-words}
A word $w = [w(1), \dots, w(n)]$ over the positive integers is a {\em Fubini word} if 
 for all $i > 1$ such that the letter $i$ appears in $w$, the letter $i-1$ also appears in $w$.
We let $\WWW_{n,k}$ denote the family of Fubini words of length $n$ with maximum letter $k$.
\end{defn}

As an example, we have $\WWW_{3,2} = \{[1,1,2], [1,2,1], [2,1,1], [1,2,2], [2,1,2], [2,2,1] \}$. Words in $\WWW_{n,k}$
may be thought of as surjective maps $w: [n] \twoheadrightarrow [k]$, giving a natural identification
$\WWW_{n,n} = S_n$.

 There is another way to think of Fubini words that will be useful for us.
 An {\em ordered set partition} of $[n]$ is a sequence $\sigma = (B_1 \mid \cdots \mid B_k)$
 of nonempty subsets of $[n]$ such that we have a disjoint union
 $[n] = B_1 \sqcup \cdots \sqcup B_k$. 
 We write $\OP_{n,k}$ for the family of all ordered set partitions of $[n]$ into $k$ blocks.
There is a natural bijection between $\WWW_{n,k} \xrightarrow{\, \sim \,} \OP_{n,k}$, viz.
 \begin{equation*}
[3,2,2,4,1,3,2] \mapsto ( 5  \mid 2 \, 3 \, 7  \mid 1 \, 6  \mid 4 ).
\end{equation*}
This bijection
$\WWW_{n,k} \xrightarrow{\, \sim \,} \OP_{n,k}$ is the inverse map
$w \mapsto w^{-1}$ on the symmetric group $S_n$ when $k = n$,
so that Fubini words and ordered set partitions are `inverse objects'.
We have
\begin{equation}
|\WWW_{n,k}| = |\OP_{n,k}| =  k! \cdot \Stir(n,k)
\end{equation}
where $\Stir(n,k)$ is the (signless)
Stirling number of the second kind counting $k$-block set partitions of $[n]$.

We will see that 
Fubini words govern the structure of $R_{n,k}$.
The group $S_n$ acts on the set $\WWW_{n,k}$ by
\begin{equation}
v \cdot [w(1),  \dots, w(n)] := [w(v^{-1}(1)), \dots, w(v^{-1}(n))] \quad \quad v \in S_n, \, 
[w(1), \dots, w(n)] \in \WWW_{n,k}.
\end{equation}
The permutation module $\CC[\WWW_{n,k}]$ is the regular representation $\CC[S_n]$  when $k = n$.
For general $k \leq n$, we will prove that $R_{n,k} \cong \CC[\WWW_{n,k}]$ as ungraded $S_n$-modules,
so  $R_{n,k}$ is a graded refinement of $\CC[\WWW_{n,k}]$.

\subsection{Coinversion codes} 
Our first step towards proving $R_{n,k} \cong \CC[\WWW_{n,k}]$ is to attach monomials
in $\CC[x_1, \dots, x_n]$ to ordered set partitions in $\OP_{n,k}$ (or, equivalently, Fubini words in $\WWW_{n,k}$).
The gadget we use for this is the {\em coinversion code} of an ordered set partition \cite{RWLine}. We review the classical case of permutations in $S_n$,
and then explain how to generalize to $\OP_{n,k}$.

The {\em inversion number} $\inv(w)$ and the
 {\em coinversion number} $\coinv(w)$ of a permutation $w \in S_n$ are
\begin{equation}  \inv(w) := | \{ 1 \leq i < j \leq n \,:\, w(i) > w(j)\}| \quad \quad 
\coinv(w) := | \{ 1 \leq i < j \leq n \,:\, w(i) < w(j) \} |.
\end{equation}
These statistics are complementary: we have
$\coinv(w) + \inv(w) = {n \choose 2}$ for any $w \in S_n$.
Their common generating function is the 
{\em Mahonian distribution}  
\begin{equation}
\sum_{w \in S_n} q^{\inv(w)} = \sum_{w \in S_n} q^{\coinv(w)} = [n]!_q
\end{equation}
where
\begin{equation}
[n]!_q := (1+q)(1+q+q^2) \cdots (1+q+q^2 + \cdots + q^{n-1})
\end{equation}
is the standard $q$-analogue of $n!$.
We  work primarily with coinversions in this subsection.

The statistic $\coinv$ can be broken into pieces.
The {\em coinversion code} associated to a permutation $w \in S_n$ is the sequence
$\code(w) = (c_1,  \dots, c_n)$ where 
\begin{equation}
c_i := | \{ j > i \,:\, w(j) > w(i) \}|
\end{equation}
For example, we have $\code ( [3,1,5,2,4] )= (2, 3, 0, 1, 0)$.
The coinversion code refines $\coinv$ in the sense that $\coinv(w) = c_1 + \cdots + c_n$.

Let $E_n := \{ (a_1, \dots, a_n) \,:\, 0 \leq a_i \leq n-i \}$ be the set of words over the positive integers
which are componentwise $\leq$ the staircase $(n-1, n-2, \dots, 1, 0)$.  It is not hard to see that 
$\code(w) \in E_n$ for any $w \in S_n$. In fact,
the map $\code: w \mapsto \code(w)$ 
is a bijection $S_n \xrightarrow{ \, \sim \, } E_n$; see \cite[Sec. 1.3]{EC1} for  details.


Our aim is to extend the story above from $S_n$ to $\OP_{n,k}$, beginning with the notion of inversion and
coinversion pairs. Let $\sigma = (B_1 \mid \cdots \mid B_k ) \in \OP_{n,k}$ be an ordered set partition and let
$1 \leq a \neq b \leq n$ be a pair of letters belonging to distinct blocks of $\sigma$ such that $b$ is 
minimal in its block.
The pair $\{a,b\}$ is an {\em inversion pair} of $\sigma$ if 
the block of $a$ is to the right of the block of $b$ and $a < b$.
Otherwise, the pair $\{a,b\}$ is a {\em coinversion pair}.
We have statistics
\begin{equation}
\inv(\sigma) := \# \text{ of inversion pairs in $\sigma$}  \quad \quad
\coinv(\sigma) := \# \text{ of coinversion pairs in $\sigma$} 
\end{equation}

As an example of these concepts, let $\sigma = ( 6 \mid 1 \, 4 \mid 2 \, 3 \, 7 \mid 5 ) \in \OP_{7,4}$.
The inversion pairs of $\sigma$ are $\{ 16, 26, 56, 24, 75 \}$ so that $\inv(\sigma) = 5$ and the coinversion
pairs
are $\{12, 15, 25, 36, 13, 35, 46, 45, 67, 17\}$ so that $\coinv(\sigma) = 10$.
Observe that (for example) $34$ is not an inversion pair
4 is not minimal in its block.
The statistics $\inv$ and $\coinv$ are complementary on $\OP_{n,k}$. More precisely,
we have 
\begin{equation}
\inv(\sigma) + \coinv(\sigma) = {k \choose 2} + (n-k)(k-1)
\end{equation}
for all $\sigma \in \OP_{n,k}$.

The  statistic $\inv$
 on $\OP_{n,k}$ was introduced by 
Steingr\'imsson \cite{Steingrimsson} in a general study of ordered set partition statistics and rediscovered
by Remmel-Wilson \cite{RemmelWilson}
in the context of the symmetric function $\Rise_{n,k}(\xx;q,t)$.
Steingr\'imsson proved that the generating 
function of $\inv$ is
\begin{equation}
\label{first-mahonian}
\sum_{\sigma \in \OP_{n,k}} q^{\inv(\sigma)} = [k]!_q \cdot \Stir_q(n,k)
\end{equation}
where the {\em $q$-Stirling number} is defined recursively by
\begin{equation}
\label{q-stirling-recursion}
\Stir_q(n,k) := \begin{cases}
\delta_{n,k} & n = 1 \\
\Stir_q(n-1,k-1) + [k]_q \cdot \Stir_q(n-1,k) & n > 1
\end{cases}
\end{equation}
Unlike in the permutation case, this distribution is {\bf not} palindromic. For example,
we have $\sum_{\sigma \in \OP_{3,2}} q^{\inv(\sigma)} = q^2 + 3q + 2$.
The distribution of $\coinv$ is the reversal
\begin{equation}
\label{second-mahonian}
\sum_{\sigma \in \OP_{n,k}} q^{\coinv(\sigma)} = \rev_q ( [k]!_q \cdot \Stir_q(n,k) ) =
q^{{k \choose 2} + (n-k)(k-1)} \cdot [k]!_{q^{-1}} \cdot \Stir_{q^{-1}}(n,k)
\end{equation}
so that (for example) $\sum_{\sigma \in \OP_{3,2}} q^{\coinv(\sigma)} = 2q^2 + 3q + 1$.
Here $\rev_q$ acts on polynomials in $q$ by reversing their coefficient sequences.

The statistic $\inv$ is tied to the Delta Conjecture.
Haglund, Remmel, and Wilson \cite{HRW} showed that 
\begin{equation}
\langle \Rise_{n,k}(\xx; q,0), p_{1^n} \rangle = 
\sum_{\sigma \in \OP_{n,k}} q^{\inv(\sigma)},
\end{equation}
giving a connection between the statistic $\inv$ and delta operators.
The same authors 
defined three other statistics
($\dinv, \maj,$ and $\minimaj$) on $\OP_{n,k}$ which satisfy analogous identities
\begin{equation}
\begin{cases}
\langle \Rise_{n,k}(\xx; 0,q), p_{1^n} \rangle = 
\sum_{\sigma \in \OP_{n,k}} q^{\maj(\sigma)} \\
\langle \Val_{n,k}(\xx; q,0), p_{1^n} \rangle = 
\sum_{\sigma \in \OP_{n,k}} q^{\dinv(\sigma)} \\
\langle \Val_{n,k}(\xx; 0,q), p_{1^n} \rangle = 
\sum_{\sigma \in \OP_{n,k}} q^{\minimaj(\sigma)},
\end{cases}
\end{equation}
so that ordered set partitions model the Delta Conjecture at $t = 0$.
Wilson \cite{WMultiset} and Rhoades \cite{RhoadesOSP} used a generalization 
of these statistics to `ordered multiset partitions' to prove
\begin{equation}
\label{combinatorial-delta-at-zero}
\Rise_{n,k}(\xx;q,0) = \Rise_{n,k}(\xx;0,q) = \Val_{n,k}(\xx;q,0) = \Val_{n,k}(\xx;0,q),
\end{equation}
giving combinatorial evidence for the Delta Conjecture.

Just as in the permutation case, the coinversion statistic on $\OP_{n,k}$ can be broken into pieces.
Let $\sigma = (B_1 \mid \cdots \mid B_k) \in \OP_{n,k}$ be an ordered set partition.
The {\em coinversion code} of $\sigma$ is the sequence $\code(\sigma) = (c_1, \dots, c_n)$ where
\begin{equation}
c_i := \begin{cases}
| \{ j > \ell \,:\, \min(B_j) > i \} | & \text{if $i = \min(B_{\ell})$ is minimal in its block,} \\
(\ell - 1) + | \{ j > \ell \,:\, \min(B_{j}) > i \} | &  \text{if $i \in B_{\ell}$ is not minimal in its block.}
\end{cases}
\end{equation}
In our example $\sigma = ( 6 \mid 1 \, 4 \mid 2 \, 3 \, 7 \mid 5 ) \in \OP_{7,4}$ we have 
$\code(\sigma) = (c_1, \dots, c_7) = (2,1,3,2,0,0,2)$.
As in the case of $S_n$, we have
$\coinv(\sigma) := c_1 + \cdots + c_n$ for any $\sigma \in \OP_{n,k}$.

The map $\sigma \mapsto \code(\sigma)$ is injective on $\OP_{n,k}$; to describe its image we need
some terminology.
Recall that a {\em shuffle} of two sequences $(a_1, \dots, a_r)$ and $(b_1, \dots, b_s)$
is an interleaving $(c_1, \dots, c_{r+s})$ of these sequences which preserves the relative order of the 
$a$'s and the $b$'s.
An {\em $(n,k)$-staircase} a shuffle of the length $k$ staircase $(k-1, k-2, \dots, 1, 0)$ with the sequence
$(k-1, \dots, k-1)$ consisting of $n-k$ copies of $k-1$.
For example, when $n = 5$ and $k = 3$, the $(n,k)$-staircases are the shuffles of $(2,1,0)$ and $(2,2)$
shown here
\begin{equation*}
(2,1,0,2,2) \quad (2,1,2,0,2) \quad (2,2,1,0,2) \quad (2,1,2,2,0) \quad (2,2,1,2,0) \quad (2,2,2,1,0)
\end{equation*}
The number of $(n,k)$-staircases is the binomial coefficient ${n-1 \choose k-1}$.

\begin{defn}
\label{substaircase}
For $k \leq n$, a word $(c_1, \dots , c_n)$ over the positive integers is {\em $(n,k)$-substaircase} 
if it is componentwise $\leq$ at least one $(n,k)$-staircase.  Let $E_{n,k}$ be the family 
of $(n,k)$-substaircase sequences.
\end{defn}

As an example, we have $(1,1,0,2,0) \in E_{5,3}$ because $(1,1,0,2,0) \leq (2,1,0,2,2)$ componentwise.
We also have $(1,1,0,2,0) \leq (2,1,2,2,0)$ componentwise: sequences in $E_{n,k}$ can fit under
more than one $(n,k)$-staircase.

An alternative characterization of the set $E_{n,k}$ can be formulated in terms of 
``skip sequence avoidance". For any subset $S = \{s_1 < s_2 < \cdots < s_r \} \subseteq [n]$, define
the {\em skip sequence} $\gamma(S) = (\gamma(S)_1, \dots, \gamma(S)_n)$ by
\begin{equation}
\gamma(S)_i = \begin{cases}
i - j + 1 & \text{if $i = s_j \in S$} \\
0 & \text{if $i \notin S$}
\end{cases}
\end{equation}
For example, when $n = 8$ we have $\gamma(2458) = (0,2,0,3,3,0,0,5)$. Thus, the first nonzero
entry in $\gamma(S)$ is $\min(S)$ and every time the set $S$ `skips' an element of $[n]$, the corresponding
entry of $\gamma(S)$ is incremented.

\begin{defn}
\label{skip-defn}
A length $n$ sequence $(a_1, \dots, a_n)$ of nonnegative integers is {\em $(n,k)$-nonskip}
if 
\begin{enumerate}
\item we have $a_i < k$ for all $i$ and
\item whenever $|S| = n-k+1$, the componentwise inequality $\gamma(S) \leq (a_n, \dots, a_1)$ does {\bf not}
hold.
\end{enumerate}
Note the reversal $(a_n, \dots, a_1)$ in the second condition.
\end{defn}

For example, if $(n,k) = (5,3)$ the sequence $(1,1,0,2,0)$ is $(n,k)$-nonskip because all of its entries are $< 3$
and it is not componentwise $\leq$ any of the reverse skip sequences
\begin{equation*}
(0,0,1,1,1) \quad (0,2,0,1,1) \quad (0,2,2,0,1), \quad (0,2,2,2,0).
\end{equation*}
For arbitrary $k \leq n$ it turns out that
\begin{equation}
E_{n,k} = \{ \text{all $(n,k)$-nonskip sequences $(a_1, \dots, a_n)$} \}
\end{equation}
so that substaircase and nonskip are equivalent.
 Coinversion codes give yet another characterization of $E_{n,k}$: we have
 $\coinv(\sigma) \in E_{n,k}$ for all $\sigma \in \OP_{n,k}$, and
the function 
\begin{equation}
\code: \OP_{n,k} \xrightarrow{\,  \, \sim \, \, } E_{n,k}
\end{equation}
is a bijection.  The inverse
  $\iota: E_{n,k} \longrightarrow \OP_{n,k}$ of this map is the following insertion algorithm.
  
  Given a sequence $(B_1 \mid \cdots \mid B_k)$ of possibly empty sets of positive integers, the 
  {\em coinversion labeling} is obtained by first labeling the empty sets with $0, 1, 2, \dots, j-1$
  from left to right (where there are $j$ empty sets), and then labeling the nonempty sets with
  $j, j+1, \dots, k-1$ from right to left.  If $(c_1, \dots, c_n) \in E_{n,k}$, we define 
  $\iota(c_1, \dots, c_n) = (B_1 \mid \cdots \mid B_k)$ by starting with a sequence of $k$ empty sets
  and iteratively inserting $i$ into the set with coinversion label $c_i$, updating coinversion labels as we go.
  For example, if $(c_1, \dots, c_n) = (2,1,3,2,0,0,2)$ this insertion is as follows
 \begin{equation*}
\begin{tabular}{c | c | c}
$i$ & $c_i$ & $(B_1 \mid \cdots \mid B_k)$ \\  \hline
1 & 2 & $(\varnothing^3 \mid \varnothing^2 \mid \varnothing^1 \mid \varnothing^0)$ \\
2 & 1 & $(\varnothing^2 \mid 1^3 \mid \varnothing^1 \mid \varnothing^0)$ \\
3 & 3 & $(\varnothing^1 \mid 1^2 \mid 2^3 \mid \varnothing^0)$ \\
4 & 2 & $(\varnothing^1 \mid 1^2 \mid 2 \, 3^3 \mid \varnothing^0)$ \\
5 & 0 & $(\varnothing^1 \mid 1 \, 4^2 \mid 2 \, 3^3 \mid \varnothing^0)$ \\
6 & 0 & $(\varnothing^0 \mid 1 \, 4^1 \mid 2 \, 3^2 \mid 5^3)$ \\
7 & 2 & $(6^0 \mid 1 \, 4^1 \mid 2 \, 3^2 \mid 5^3)$ \\
\end{tabular}
\end{equation*}
and we conclude $\iota(2,1,3,2,0,0,2) =  ( 6 \mid 1 \, 4 \mid 2 \, 3 \, 7 \mid 5 )$.
Our results on codes and ordered set partitions may be summarized as follows (see \cite{HRW, RWLine}).

\begin{theorem}
\label{coinversion-bijection}
Let $k \leq n$
and let $(a_1, \dots, a_n)$ be a length $n$ sequence of nonnegative integers. The following are equivalent.
\begin{enumerate}
\item  The sequence $(a_1, \dots, a_n)$ is componentwise $\leq$ some $(n,k)$-staircase.
\item  The sequence $(a_1, \dots, a_n)$ is $(n,k)$-nonskip.
\item  We have $\code(\sigma) = (a_1, \dots, a_n)$ for some ordered set partition $\sigma \in \OP_{n,k}$.
\end{enumerate}
\end{theorem}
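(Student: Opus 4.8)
The plan is to establish $(3)\Rightarrow(1)$ and $(1)\Leftrightarrow(2)$ first, and then $(1)\Rightarrow(3)$; along the way the arguments for $(3)\Rightarrow(1)$ and $(1)\Rightarrow(3)$ will show that $\code\colon\OP_{n,k}\to E_{n,k}$ and the insertion map $\iota\colon E_{n,k}\to\OP_{n,k}$ are mutually inverse bijections. For $(3)\Rightarrow(1)$, given $\sigma=(B_1\mid\cdots\mid B_k)\in\OP_{n,k}$ I would list its block minima in increasing order as $\mu_1<\cdots<\mu_k$ and set $s_{\mu_m}:=k-m$ for each $m$ and $s_i:=k-1$ at every other position. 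Read left to right, the entries of $s$ at positions $\mu_1<\cdots<\mu_k$ spell out $(k-1,k-2,\dots,0)$ and the remaining $n-k$ entries equal $k-1$, so $s$ is an $(n,k)$-staircase; and $\code(\sigma)=(c_1,\dots,c_n)\le s$ entrywise follows directly from the definition of the coinversion code. Indeed, if $i=\min(B_\ell)$ is the $m$-th smallest block minimum then $c_i\le|\{j:\min(B_j)>i\}|=k-m=s_i$, while if $i$ is not minimal in its block $B_\ell$ then $c_i=(\ell-1)+|\{j>\ell:\min(B_j)>i\}|\le(\ell-1)+(k-\ell)=k-1=s_i$.

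For $(1)\Leftrightarrow(2)$, in which ordered set partitions play no role, I would reverse the sequence and reformulate. Writing $b=(a_n,\dots,a_1)$: since an $(n,k)$-staircase is determined by the $k$ positions carrying the values $k-1>\cdots>0$, condition $(1)$ says every $a_i\le k-1$ and there exist $p_1<\cdots<p_k$ with $a_{p_j}\le k-j$, which after reversing indices becomes: every $b_i\le k-1$ and there exist $q_1<\cdots<q_k$ with $b_{q_j}\le j-1$. Unwinding $\gamma(S)\le(a_n,\dots,a_1)$ for sets with $|S|=n-k+1$, condition $(2)$ says every $b_i<k$ and there is \emph{no} set $S=\{s_1<\cdots<s_r\}$, $r=n-k+1$, with $b_{s_j}\ge s_j-j+1$ for all $j$. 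A chain $q_1<\cdots<q_k$ can be chosen greedily from the left, and an exchange argument shows the greedy run succeeds exactly when such a chain exists. If it stalls after producing $q_1<\cdots<q_m$ with $m<k$, then every position past $q_m$ has $b$-value $\ge m+1$ and every position skipped inside $(q_{l-1},q_l)$ has $b$-value $\ge l$; enlarging $\{q_1,\dots,q_m\}$ to a $(k-1)$-element set $T$ by appending positions drawn from the right ends of these gaps, from the rightmost gap inward, one checks that $S=[n]\setminus T$ has size $n-k+1$ and, scanning left to right, $b_s\ge 1+|T\cap[1,s-1]|$ for every $s\in S$, i.e.\ $\gamma(S)\le b$ --- so $(2)$ fails. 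Conversely, if such an $S$ coexisted with a valid chain $q_1<\cdots<q_k$, then since $k+(n-k+1)>n$ the two sets meet; for the largest common element $q_a=s_c$, combining $b_{q_a}\le a-1$ with $b_{s_c}\ge s_c-c+1=q_a-c+1$ gives $q_a\le a+c-2$, and comparing the $a$ elements $q_1,\dots,q_a$ with the $c$ elements $s_1,\dots,s_c$ inside $[1,q_a]$ forces a second common element; descending on the $S$-rank $c$, always passing to the next-largest common element, one reaches $c=1$, where $b_{s_1}\ge s_1=q_a$ contradicts $b_{q_a}\le a-1<a\le q_a$. Hence substaircase and nonskip agree.

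Finally, for $(1)\Rightarrow(3)$ I would run $\iota$ on a substaircase $a$. Each step is legal because all $a_i\in\{0,\dots,k-1\}$ while the $k$ blocks always carry the distinct coinversion labels $0,\dots,k-1$, so the only thing to verify is that $\iota(a)\in\OP_{n,k}$, i.e.\ no block stays empty. Inserting $i$ fills a previously empty block exactly when $a_i$ is smaller than the current number of empty blocks; were this to happen fewer than $k$ times, the run would exhibit an index past which every remaining entry of $a$ is at least the (positive) number of surviving empty blocks, and translating this through the reversal in Definition~\ref{skip-defn} produces a set $S$ with $|S|=n-k+1$ and $\gamma(S)\le(a_n,\dots,a_1)$ --- contradicting $(2)$, hence $(1)$. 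Granting $\iota(a)\in\OP_{n,k}$, a routine bookkeeping of how coinversion labels update during insertion gives $\code(\iota(a))=a$, and the same bookkeeping run backward gives $\iota(\code(\sigma))=\sigma$; so $\code$ and $\iota$ are mutually inverse, and in particular $(1)\Rightarrow(3)$. The hard part will be the two bookkeeping claims just invoked --- that $\iota$ never leaves a block empty on a substaircase input, and the implication substaircase $\Rightarrow$ nonskip --- both of which amount to pinning down precisely when a greedy embedding or insertion stalls and reading a forbidden skip configuration off such a stall, rather than resting on one clean idea.
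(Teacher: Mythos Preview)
The paper does not actually prove Theorem~\ref{coinversion-bijection}; it states the result after describing $\code$ and $\iota$ and defers the justification to the references \cite{HRW,RWLine}. So there is no in-paper argument to compare against, and your outline must be judged on its own terms. It is essentially correct.

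Your $(3)\Rightarrow(1)$ is clean and works as written. The $(1)\Leftrightarrow(2)$ argument via the reversed sequence $b$ also goes through. The pigeonhole descent for substaircase $\Rightarrow$ nonskip is fine: for any common element $q_a=s_c$ the two inequalities force $q_a\le a+c-2$, which squeezes a second common element into $[1,q_a]$ with strictly smaller $S$-rank, and the base case $c=1$ gives the contradiction. For the converse, the construction of $T$ is easier than you fear: once greedy stalls at $q_1<\cdots<q_m$, adjoin the $k-1-m$ \emph{rightmost} elements of $[n]\setminus\{q_1,\dots,q_m\}$ one at a time. Each adjoined element has nothing of $S$ to its right, so the inequality $b_s\ge 1+|T\cap[1,s-1]|$ is preserved automatically. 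No gap-by-gap bookkeeping is needed.

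For $(1)\Rightarrow(3)$ you are taking an unnecessary detour through $(2)$. The insertion $\iota$ \emph{is} the greedy search for a witnessing staircase on $a$ itself: if $E_i$ is the number of empty blocks just before step $i$, then $E_1=k$ and $E_{i+1}=E_i-[a_i<E_i]$, so the positions where $E$ drops are exactly the greedy choices $p_1<p_2<\cdots$ with $a_{p_j}\le k-j$. Thus $\iota(a)$ lands in $\OP_{n,k}$ iff greedy finds $k$ such positions, which by the same exchange argument you already invoke on the $b$ side holds iff \emph{some} such chain exists, i.e., iff $a$ is substaircase. No reversal or skip-sequence construction is needed here.

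Finally, the ``routine bookkeeping'' for $\code(\iota(a))=a$ really is routine once one observes that during $\iota$ the empty blocks are precisely those whose eventual minima exceed the current index $i$. If $i$ enters an empty $B_\ell$, its label $a_i$ counts the empty blocks to the right of $B_\ell$, which is $|\{j>\ell:\min B_j>i\}|=c_i$. If $i$ enters a nonempty $B_\ell$, its label is $j_i$ plus the number of nonempty blocks left of $B_\ell$; adding the empty blocks on both sides converts this to $(\ell-1)+|\{j>\ell:\min B_j>i\}|=c_i$. The identity $\iota(\code(\sigma))=\sigma$ follows by the same count, or simply from injectivity of $\code$ together with $(3)\Rightarrow(1)\Rightarrow(3)$.
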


 \subsection{Demazure characters and Gr\"obner theory}  Theorem~\ref{coinversion-bijection} shows that
 the collection of monomials
 \begin{equation}
 \AAA_{n,k} := \{ x_1^{a_1} \cdots x_n^{a_n} \,:\, (a_1, \dots, a_n) \in E_{n,k} \}
 \end{equation}
 is in bijection with $\OP_{n,k}$ (or $\WWW_{n,k}$).  When $k = n$, E. Artin proved \cite{Artin} that
 the family 
 $\AAA_n := \AAA_{n,n}$ of `sub-staircase monomials' descends to a basis of 
 the classical coinvariant ring $R_n$.  In this subsection and the next, we will see that 
 $\AAA_{n,k}$ descends to a basis of $R_{n,k}$. 
 We show that $\AAA_{n,k}$ descends to a spanning set of $R_{n,k}$ using Gr\"obner theory; we start 
 by reviewing the ``Gr\"obner basics".
 
 A total order $<$ on the monomials in $\CC[x_1, \dots, x_n]$ is a {\em term order} if
 \begin{itemize}
 \item  for any monomial $m$ we have $1 \leq m$, and
 \item  $m < m'$ implies $m'' m < m'' m'$ for any monomials $m, m', m''$.
 \end{itemize}
 Given a term order $<$, for any nonzero polynomial $f \in \CC[x_1, \dots, x_n]$ denote by 
 $\initial_<(f)$ the largest monomial under $<$ appearing in $f$.

 In this chapter, we shall exclusively use the $\neglex$ term order, which is the lexicographical
term order {\bf with respect to the variable ordering $x_n > \cdots > x_2 > x_1$.}  Explicitly, we have
$x_1^{a_1} \cdots x_n^{a_n} < x_1^{b_1} \cdots x_n^{b_n}$ if there exists $1 \leq i \leq n$ 
so that $a_i < b_i$ and $a_{i+1} = b_{i+1}, \dots, a_n = b_n$.
 
 If $I \subseteq \CC[x_1, \dots, x_n]$
 is an ideal, the corresponding {\em initial ideal} is the monomial ideal
 $\initial_<(I) := \langle \initial(f) \,:\, 0 \neq f \in I \rangle$.  A finite subset $G = \{g_1, \dots, g_r \} \subseteq I$
 of nonzero polynomials in $I$ is a {\em Gr\"obner basis} if 
 $\initial_<(I) = \langle \initial_<(g_1), \dots, \initial_<(g_r) \rangle$.  This condition implies that 
 $I = \langle G \rangle$.  Furthermore, the set 
 \begin{multline}
 \{ m  \,:\, \text{$m$ a monomial in $\CC[x_1, \dots, x_n]$ not contained in $\initial_<(I)$} \} = \\
 \{ m  \,:\, \text{$m$ a monomial in $\CC[x_1, \dots, x_n]$ such that $\initial_<(g) \nmid m$ for all $g \in G$} \}
 \end{multline}
 of monomials not contained in $\initial_<(I)$
  descends a basis for $\CC[x_1, \dots, x_n]/I$ as a $\CC$-vector space.
 This is the {\em standard monomial basis} of $\CC[x_1, \dots, x_n]/I$; it is determined by the term order $<$
 and the ideal $I$.
 
 Although Gr\"obner bases are not unique, there is a canonical choice of Gr\"obner basis for an ideal $I$
 and a term order $<$.  A Gr\"obner basis $G$ is called {\em minimal} if for any distinct elements 
 $g, g' \in G$ we have $\initial_<(g) \nmid \initial_<(g')$.
 The Gr\"obner basis $G$ is {\em reduced} if it  is minimal and, for any distinct $g, g' \in G$,
 no monomial appearing in $g$ is divisible by $\initial_<(g')$.
 Every ideal $I \subseteq \CC[x_1, \dots, x_n]$ has a unique reduced Gr\"obner basis for a given term order.

Gr\"obner bases are of central importance in many computational algorithms in commutative algebra.
However, even for ideals $I \subseteq \CC[x_1 ,\dots, x_n]$ with a nice generating set and of algebraic or geometric 
significance, the polynomials appearing in a Gr\"obner basis can have unpredictable monomials with 
ugly coefficients. Miraculously, the Gr\"obner bases of our ideals $I_{n,k}$ are very structured.
We present the classical case of $R_n$, and then outline the generalization to $R_{n,k}$.

The coinvariant ideal $I_n = \langle e_1, e_2, \dots, e_n \rangle$ has the alternate 
presentation $I_n = \langle h_1, h_2, \dots, h_n \rangle$ in terms of the complete homogeneous 
symmetric polynomials in $x_1, x_2, \dots, x_n$.  From this one can show that 
we have
\begin{equation}
h_{n-i}(x_1, x_2, \dots, x_i) \in I_n
\end{equation}
for $1 \leq i \leq n$.
With respect to the $\neglex$ order $<$, we have $\initial_<(h_{n-i}(x_1, x_2, \dots, x_i))= x_i^{n-i}$ so that the 
standard monomial basis of $R_n$ is contained in 
\begin{equation}
\{ \text{monomials $m$ in $x_1, \dots, x_n \,:\, x_i^{n-i} \nmid m$ for $1 \leq i \leq n$} \} = \AAA_n,
\end{equation}
the family of substaircase monomials.  Since $\dim R_n = n! = |\AAA_n|$, this implies that
$\AAA_n$ is the standard monomial basis of $R_n$ and 
$\{ h_{n-i}(x_1, x_2, \dots, x_i) \,:\, 1 \leq i \leq n \}$ is a Gr\"obner basis of $I_n$. It is easily seen that this 
Gr\"obner basis is reduced.

In order to generalize this story to $R_{n,k}$, we  need an  analogue 
of the  $h_{n-i}(x_1, x_2, \dots, x_i)$.
These will be the {\em Demazure characters} (or {\em key polynomials}) $\kappa_{\gamma}$
indexed by weak compositions (i.e. sequences of nonnegative integers)
$\gamma = (\gamma_1, \dots, \gamma_n)$ of length $n$.
Given $\gamma$, the polynomial $\kappa_{\gamma}$ is defined recursively as follows:
\begin{equation}
\kappa_{\gamma} := \begin{cases}
x_1^{\gamma_1} \cdots x_n^{\gamma_n} & \text{if $\gamma_1 \geq \cdots \geq \gamma_n$} \\
\pi_i( \kappa_{s_i \cdot \gamma} ) & \text{if $\gamma_i < \gamma_{i+1}$}
\end{cases}
\end{equation}
where $s_i \cdot \gamma = (\gamma_1, \dots, \gamma_{i+1}, \gamma_i, \dots, \gamma_n)$ swaps
the $i^{th}$ and $(i+1)^{st}$ parts of $\gamma$, the {\em isobaric divided difference operator}
$\pi_i$ acts on polynomials $f \in \CC[x_1, \dots, x_n]$  by
\begin{equation}
\pi_i(f) := \frac{x_i f - x_{i+1}(s_i \cdot f)}{x_i - x_{i+1}}
\end{equation}
and $s_i \cdot f(x_1, \dots, x_i, x_{i+1}, \dots, x_n) = f(x_1, \dots, x_{i+1}, x_i, \dots, x_n)$.

When $\gamma = (\gamma_1 \leq \cdots \leq \gamma_n)$ is weakly decreasing, the 
Demazure character $\kappa_{\gamma}$ coincides with the Schur polynomial
$s_{\rev(\gamma)}$ corresponding to the reversal of $\gamma$.
Thus, Demazure characters interpolate between monomials and Schur polynomials.
The polynomial $\kappa_{\gamma}$ has a representation-theoretic interpretation as the 
character of a certain indecomposable $B$-module where $B \subseteq GL_n(\CC)$
is the Borel subgroup.
It is not too hard to check inductively that 
\begin{equation}
\label{demazure-leading-term}
\initial_<(\kappa_{\gamma}(x_1, \dots, x_n)) = x_1^{\gamma_1} \cdots x_n^{\gamma_n}
\end{equation}
in the $\neglex$ term order.

\begin{lemma}
\label{demazure-containment}
Let $S \subseteq [n]$ be a subset of size $|S| = n-k+1$, let $\gamma(S) = (\gamma_1, \dots, \gamma_n)$
be the corresponding skip sequence, and let 
$\rev(\gamma(S)) = (\gamma_n, \dots, \gamma_1)$ be its reversal.  We have
\begin{equation*}
\kappa_{\rev(\gamma(S))}(x_1, \dots, x_n) \in \langle e_n, e_{n-1}, \dots, e_{n-k+1} \rangle \subseteq
\ZZ[x_1, \dots, x_n].
\end{equation*}
\end{lemma}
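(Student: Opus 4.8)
The plan is to exhibit $\kappa_{\rev(\gamma(S))}$ explicitly as a polynomial combination of $e_n, e_{n-1}, \dots, e_{n-k+1}$ by identifying it with a Demazure character that agrees with a complete homogeneous symmetric polynomial in a suitable subset of the variables. Write $S = \{s_1 < \cdots < s_{n-k+1}\}$, so that by definition $\gamma(S)_{s_j} = s_j - j + 1$ and $\gamma(S)_i = 0$ for $i \notin S$. First I would understand the reversal $\rev(\gamma(S)) = (\gamma(S)_n, \dots, \gamma(S)_1)$ as a weak composition and run the Demazure recursion on it. The key observation I expect to use is that the recursion for $\kappa$ via the operators $\pi_i$ stabilizes the ideal $\langle e_n, \dots, e_{n-k+1}\rangle$: the $e_j$ in all $n$ variables are symmetric, hence fixed by each $s_i$ and by each $\pi_i$, so $\pi_i$ maps the ideal $\langle e_n, \dots, e_{n-k+1}\rangle$ to itself (since $\pi_i$ is $\CC[x_1,\dots,x_n]^{s_i}$-linear and the generators are symmetric). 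Thus it suffices to prove the containment for one weakly decreasing rearrangement of $\gamma(S)$ — say the one where we can directly recognize the associated monomial or Schur polynomial as lying in the ideal — and then apply a sequence of $\pi_i$'s to move to $\rev(\gamma(S))$.

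Concretely, the second step is to find the right ``base case.'' Sorting $\rev(\gamma(S))$ into weakly decreasing order produces a partition $\lambda$ whose parts are exactly the nonzero values $s_j - j + 1$; the corresponding Demazure character is then the Schur polynomial $s_{\lambda'}$... more usefully, I would instead route through a different weakly decreasing composition, namely the one that places the nonzero entries in the last coordinates, so that $\kappa$ of it is a monomial supported on $x_{k}, x_{k+1}, \dots, x_n$ (a block of $n-k+1$ variables). The crucial arithmetic identity to verify is the classical fact that, in the ideal generated by $e_n, e_{n-1}, \dots, e_{n-k+1}$ of $\CC[x_1,\dots,x_n]$, one has $h_r(x_{i}, x_{i+1}, \dots, x_n) \in \langle e_n, \dots, e_{n-k+1}\rangle$ for appropriate $r$ depending on how many variables are omitted — this follows from the generating-function identity $\left(\sum_{r\ge 0} h_r(x_i,\dots,x_n) z^r\right)\left(\sum_{j\ge 0} (-1)^j e_j(x_1,\dots,x_n) z^j\right) = \prod_{\ell < i}(1 - x_\ell z)^{-1}\prod_{\ell \ge i}(1-x_\ell z)\prod_{\ell \ge i}(1-x_\ell z)^{-1}$, i.e. comparing coefficients shows $h_r(x_i,\dots,x_n)$ is, modulo $e_{n-k+1},\dots,e_n$, a polynomial in $x_1,\dots,x_{i-1}$ only, and by a degree/rank count it actually lies in the ideal when the parameters are in the range forced by $|S| = n-k+1$.

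The third step is bookkeeping: translate the combinatorics of the skip sequence into the precise variable-block and degree where the base-case monomial lives, apply \eqref{demazure-leading-term} to keep track of leading terms, and check that the $\pi_i$-moves connecting the sorted composition to $\rev(\gamma(S))$ are legal (each move must increase an entry relative to its right neighbor, matching the ``$\gamma_i < \gamma_{i+1}$'' branch of the recursion). I would organize this as: (a) $\pi_i$ preserves $\langle e_n, \dots, e_{n-k+1}\rangle$; (b) some weakly decreasing rearrangement $\delta$ of $\gamma(S)$ has $\kappa_\delta = $ an explicit monomial/product lying in the ideal by the $h_r$-identity; (c) $\rev(\gamma(S))$ is reachable from $\delta$ by a chain of simple transpositions each of which, read as a Demazure step, only increases-to-the-left, so that $\kappa_{\rev(\gamma(S))}$ is obtained from $\kappa_\delta$ by applying $\pi_i$'s and hence stays in the ideal.

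The main obstacle I anticipate is step (b): pinning down exactly which rearrangement of $\gamma(S)$ makes $\kappa$ visibly a member of $\langle e_n, \dots, e_{n-k+1}\rangle$, and proving the corresponding $h_r$-in-two-variable-blocks containment with the correct numerology (the subtle point is that $|S| = n-k+1$ is precisely the size for which the relevant $h_r$ in the complementary $k-1$ variables has degree large enough to be forced into the ideal generated by the top $k$ elementary symmetrics). Everything else — the $\pi_i$-invariance of the ideal and the reachability of $\rev(\gamma(S))$ by Demazure moves — should be routine once the base case is correctly set up, using only \eqref{demazure-leading-term} and the symmetry of the $e_j$.
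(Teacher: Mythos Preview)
Your observation (a) is correct: each $\pi_i$ is linear over $\CC[x_1,\dots,x_n]^{S_n}$, so the ideal $\langle e_n,\dots,e_{n-k+1}\rangle$ is $\pi_i$-stable. The strategy therefore reduces, as you say, to finding a composition $\delta$ on the Demazure reduction path from the weakly decreasing sort $\lambda$ of $\gamma(S)$ down to $\rev(\gamma(S))$ for which $\kappa_\delta$ visibly lies in the ideal.

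The difficulty is that no such $\delta$ exists in general, and in particular your proposed base cases fail. The only composition on that path for which $\kappa$ is a monomial is $\lambda$ itself, and $x^\lambda$ is typically \emph{not} in $\langle e_n,\dots,e_{n-k+1}\rangle$. Concretely, take $n=3$, $k=2$, $S=\{1,2\}$: then $\gamma(S)=(1,1,0)$, $\rev(\gamma(S))=(0,1,1)$, and the decreasing sort is $\lambda=(1,1,0)$ with $\kappa_\lambda = x_1x_2$. One checks directly that $x_1x_2\notin\langle e_2,e_3\rangle$ (set $x_3=0$ and $x_1=0$ in a putative expression $x_1x_2 = P e_2 + Q e_3$ to derive $P(0,x_2,0)=1$ and $P(0,x_2,0)=0$). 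Yet $\kappa_{(0,1,1)} = e_2$ does lie in the ideal. So the containment you want is a property of the specific composition $\rev(\gamma(S))$, not of anything upstream in the recursion, and the $\pi_i$-stability of the ideal gives no leverage. Your alternative suggestion of routing through a $\delta$ with $\kappa_\delta = h_r(x_i,\dots,x_n)$ cannot work either: such $\delta$ have a single nonzero entry, whereas every rearrangement of $\gamma(S)$ has $n-k+1\ge 1$ nonzero entries, so they lie on the same Demazure path only when $k=n$.

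The paper's argument is of a different nature: it does not attempt to propagate ideal membership through the $\pi_i$, but instead uses a dual Pieri rule for Demazure characters (due to Haglund--Luoto--Mason--van Willigenburg) together with a sign-reversing involution to produce an explicit expression for $\kappa_{\rev(\gamma(S))}$ as a $\ZZ[x_1,\dots,x_n]$-combination of $e_n,\dots,e_{n-k+1}$. The combinatorics of the skip sequence enters through that involution, not through the Demazure recursion.
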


Lemma~\ref{demazure-containment} was proven by Haglund-Rhoades-Shimozono
\cite[Lem. 3.4]{HRS} using an extension of the (dual) Pieri rule to Demazure characters
due to Haglund-Luoto-Mason-van Willigenburg \cite{HLMW} and a sign-reversing involution to explicitly
write $\kappa_{\rev(\gamma(S))}(x_1, \dots, x_n)$ as a polynomial combination of 
$e_n, e_{n-1}, \dots, e_{n-k+1}$.
As a consequence of Lemma~\ref{demazure-containment}, we obtain a spanning set of $R_{n,k}$.

\begin{proposition}
\label{a-spans}
The set $\AAA_{n,k}$ of $(n,k)$-substaircase monomials spans $R_{n,k}$.
\end{proposition}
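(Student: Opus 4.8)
The plan is to use Gr\"obner theory exactly as in the classical case of $R_n$ sketched just before the statement. The goal is to show that every monomial in $\CC[x_1, \dots, x_n]$ that does \emph{not} lie in $\AAA_{n,k}$ is already zero in $R_{n,k}$, i.e. lies in the ideal $I_{n,k}$. By Theorem~\ref{coinversion-bijection}, a monomial $x_1^{a_1} \cdots x_n^{a_n}$ fails to be in $\AAA_{n,k}$ precisely when $(a_1, \dots, a_n)$ is \emph{not} $(n,k)$-nonskip, which means either some $a_i \geq k$, or there is a subset $S \subseteq [n]$ with $|S| = n-k+1$ such that $\gamma(S) \leq (a_n, \dots, a_1)$ componentwise. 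So it suffices to show that each such ``bad'' monomial is divisible by the leading term (under $\neglex$) of some explicit element of $I_{n,k}$.

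First I would handle the easy case: if $a_i \geq k$ for some $i$, then $x_i^k$ divides the monomial, and $x_i^k \in I_{n,k}$ by definition, so the monomial is $0$ in $R_{n,k}$. Next, for the remaining bad monomials there is a subset $S$ of size $n-k+1$ with $\gamma(S) \leq (a_n, \dots, a_1)$. Here I would invoke Lemma~\ref{demazure-containment}: the Demazure character $\kappa_{\rev(\gamma(S))}(x_1, \dots, x_n)$ lies in $\langle e_n, \dots, e_{n-k+1}\rangle \subseteq I_{n,k}$. By \eqref{demazure-leading-term}, its $\neglex$-leading term is $x_1^{\gamma(S)_n} x_2^{\gamma(S)_{n-1}} \cdots x_n^{\gamma(S)_1}$ --- that is, the monomial whose exponent vector is $\rev(\gamma(S)) = (\gamma(S)_n, \dots, \gamma(S)_1)$. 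The condition $\gamma(S) \leq (a_n, \dots, a_1)$ componentwise is exactly the statement $\rev(\gamma(S)) \leq (a_1, \dots, a_n)$ componentwise (just reverse both sides), which says precisely that this leading monomial divides $x_1^{a_1} \cdots x_n^{a_n}$. Hence the bad monomial lies in the initial ideal $\initial_<(I_{n,k})$, so it is $0$ in $R_{n,k}$.

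Putting these together, every monomial outside $\AAA_{n,k}$ vanishes in $R_{n,k}$, so the image of $\AAA_{n,k}$ spans $R_{n,k}$ over $\CC$. I would phrase the argument via the standard monomial basis: the collection $\{x_i^k : 1 \le i \le n\} \cup \{\kappa_{\rev(\gamma(S))} : |S| = n-k+1\}$ sits inside $I_{n,k}$, and its leading terms under $\neglex$ are exactly the generators of the monomial ideal whose complementary monomials are $\AAA_{n,k}$; therefore the standard monomial basis of $R_{n,k}$ for the $\neglex$ order is contained in $\AAA_{n,k}$, and a fortiori $\AAA_{n,k}$ spans.

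The main obstacle is really just being careful with the reversal bookkeeping: the $\neglex$ order is defined with respect to $x_n > \cdots > x_1$, the skip-sequence condition in Definition~\ref{skip-defn} already builds in a reversal $(a_n, \dots, a_1)$, and the Demazure character is indexed by $\rev(\gamma(S))$, so one must check that all these reversals compose to the clean divisibility statement above rather than fighting each other. Everything else --- the reduction to showing bad monomials lie in $I_{n,k}$, the role of $x_i^k$, and the appeal to Lemma~\ref{demazure-containment} and \eqref{demazure-leading-term} --- is routine once the indexing is pinned down. Note this argument only gives a spanning set; the matching lower bound $\dim R_{n,k} \ge |\AAA_{n,k}| = |\OP_{n,k}|$ needed to upgrade ``spans'' to ``basis'' is a separate matter, presumably taken up in the next subsection.
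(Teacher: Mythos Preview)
Your approach is exactly the paper's: use Lemma~\ref{demazure-containment}, Equation~\eqref{demazure-leading-term}, and the generators $x_i^k$ to conclude that the standard monomial basis of $R_{n,k}$ under $\neglex$ is contained in $\AAA_{n,k}$. Your final paragraph states this correctly, and your reversal bookkeeping is right.

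However, your opening framing and the sentence ``Hence the bad monomial lies in the initial ideal $\initial_<(I_{n,k})$, so it is $0$ in $R_{n,k}$'' are wrong. A monomial outside $\AAA_{n,k}$ need \emph{not} lie in $I_{n,k}$: for instance with $(n,k)=(3,2)$ one has $x_2x_3 \notin \AAA_{3,2}$, but $x_2x_3 \equiv -x_1x_2 - x_1x_3 \not\equiv 0$ in $R_{3,2}$. Lying in $\initial_<(I_{n,k})$ only means the monomial is not standard, not that it vanishes. So drop the ``show bad monomials are zero'' goal entirely and keep only the standard-monomial-basis phrasing you give at the end; that is the actual argument, and it matches the paper's one-line proof.
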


\begin{proof}
By Lemma~\ref{demazure-containment}, Equation~\eqref{demazure-leading-term}, and the fact that 
the variable powers $x_1^k, x_2^k, \dots, x_n^k$ lie in $I_{n,k}$, we 
see that $\AAA_{n,k}$ contains the standard monomial basis for $R_{n,k}$ with respect to the $\neglex$
term order.
\end{proof}

\subsection{Orbit harmonics} Proposition~\ref{a-spans}
bounds the dimension of $R_{n,k}$ from above. We bound this dimension from below using {\em orbit harmonics},
 a general technique for turning ungraded group actions on finite sets into graded actions on quotient rings.
 Orbit harmonics dates back to at least the work of Kostant \cite{Kostant} for the classical coinvariant ring and was later
 applied by Garsia-Procesi \cite{GP} to Tanisaki quotients and by Garsia-Haiman \cite{GH2} in the context of Macdonald polynomials.
We outline the general approach, and then explain how it applies in our setting.

For any finite subset $Z \subseteq \CC^n$,  let $\II(Z) \subseteq \CC[x_1, \dots x_n]$ be the  ideal
\begin{equation}
\II(Z) := \{ f \in \CC[x_1, \dots, x_n] \,:\, f(\zz) = 0 \text{ for all $\zz \in Z$} \}
\end{equation}
of polynomials which vanish on $Z$.
The {\em coordinate ring} $\CC[x_1, \dots, x_n]/\II(Z)$ is the ring of polynomial maps
$Z \longrightarrow \CC$.
Since $Z$ is finite, Lagrange interpolation guarantees that any function $Z \longrightarrow \CC$
is the restriction of a polynomial map $f \in \CC[x_1, \dots, x_n]$ and 
have
 \begin{equation}
 \label{ideal-space-isomorphism}
\CC[Z] \cong \CC[x_1, \dots, x_n]/\II(Z)
 \end{equation}
 where $\CC[Z]$ is the vector space with basis $Z$.
The ideal $\II(Z) \subseteq \CC[x_1, \dots, x_n]$ is usually not homogeneous, so
the ring $\CC[x_1, \dots, x_n]/\II(Z)$ is usually not graded. However, there is a standard way to produce a graded ideal
from $\II(Z)$ as follows.

Given a nonzero polynomial $f \in \CC[x_1, \dots, x_n]$, let $\tau(f)$ denote the highest degree component of $f$.
That is, if $f = f_d + \cdots + f_1 + f_0$ with $f_i$ homogeneous of degree $i$ and $f_d \neq 0$, we have
$\tau(f) = f_d$.
For any ideal $I \subseteq \CC[x_1, \dots, x_n]$, the {\em associated graded ideal} is
\begin{equation}
\gr \, I := \langle \tau(f) \,:\, f \in I - \{0\} \rangle \subseteq \CC[x_1, \dots, x_n]
\end{equation}
By construction, the ideal $\gr \, I$ is homogeneous so that the quotient
$\CC[x_1, \dots, x_n]/\gr \, I$ is a graded ring.
The rings $\CC[x_1, \dots, x_n]/I$ and $\CC[x_1, \dots, x_n]/\gr \, I$ share a number of features; an important one for us 
is as follows.

\begin{lemma}
\label{associated-graded-lemma}
Let $\BBB \subseteq \CC[x_1, \dots, x_n]$ be a set of homogeneous polynomials and let $I \subseteq \CC[x_1, \dots, x_n]$
be an ideal. If  $\BBB$ descends to a basis of $\CC[x_1, \dots, x_n]/ \gr \, I$ then
$\BBB$ descends to a basis of $\CC[x_1, \dots, x_n]/I$.
\end{lemma}

The isomorphism~\eqref{ideal-space-isomorphism} and 
Lemma~\ref{associated-graded-lemma} imply that for any finite point locus
$Z \subseteq \CC^n$, we have isomorphisms
\begin{equation}
\CC[Z] \cong \CC[x_1, \dots, x_n]/\II(Z) \cong \CC[x_1, \dots, x_n]/ \gr \, \II(Z)
\end{equation}
of vector spaces where the quotient $\CC[x_1, \dots, x_n]/ \gr \, \II(Z)$ is graded.
We upgrade these vector space isomorphisms by incorporating group actions.

Let $G \subseteq GL_n(\CC)$ be a finite matrix group. 
In addition to the natural action of $G$ on $\CC^n$, we have an action of $G$ on $\CC[x_1 ,\dots, x_n]$
by linear substitutions.
If $Z \subseteq GL_n(\CC)$ is $G$-stable,  the ideals 
$\II(Z)$ and $\gr \, \II(Z)$ are  $G$-stable, as well.  If $Z \subseteq \CC^n$ is finite, 
 we have 
 isomorphisms of ungraded $G$-modules 
\begin{equation}
\label{g-module-iso}
\CC[Z] \cong_G \CC[x_1, \dots, x_n]/\II(Z) \cong_G \CC[x_1, \dots, x_n]/\gr \, \II(Z)
\end{equation}
where the quotient $ \CC[x_1, \dots, x_n]/\gr \, \II(Z)$ has the additional structure of a graded $G$-module.
The first isomorphism $\CC[Z] \cong_G \CC[x_1, \dots, x_n]/\II(Z)$ in \eqref{g-module-iso}
is standard; the second isomorphism 
may be justified as follows.

Let $\BBB \subseteq \CC[x_1, \dots, x_n]$ be a set of homogeneous polynomials which descends to a basis 
of $\CC[x_1, \dots, x_n]/\II(Z)$.  By Lemma~\ref{associated-graded-lemma}, the set $\BBB$ also descends to a basis of 
$\CC[x_1, \dots, x_n]/\gr \, \II(Z)$.  The set $\BBB$ decomposes as a disjoint union
\begin{equation}
\BBB = \BBB_0 \sqcup \BBB_1 \sqcup \cdots \sqcup \BBB_r
\end{equation}
where $\BBB_d$ consists of polynomials $f \in \BBB$ of degree $d$.

Fix a group element $g \in G$.  Then $g$ acts on both $\CC[x_1, \dots, x_n]/\II(Z)$ and $\CC[x_1, \dots, x_n]/\gr \, \II(Z)$.
If we express these actions with respect to the basis
$\BBB = \BBB_0 \sqcup \BBB_1 \sqcup \cdots \sqcup \BBB_r$ we obtain matrices of block form
 \begin{equation*}
 \begin{pmatrix}
 A_0 & * & \cdots & * \\
  & A_1 & \cdots & * \\
  & & \ddots & \\
  & &  & A_r
 \end{pmatrix}
  \quad \quad \text{and} \quad \quad
  \begin{pmatrix}
 A_0 &  &  &  \\
  & A_1 &  &  \\
  & & \ddots & \\
  & &  & A_r
 \end{pmatrix}
 \end{equation*}
where the diagonal block $A_d$ has size $|\BBB_d|$ and is the same 
 in either matrix.  In particular, these matrices have the same trace.
 Since two representations of a finite group with the same character are isomorphic, we 
 obtain the second isomorphism $\CC[x_1, \dots, x_n]/\II(Z) \cong_G \CC[x_1, \dots, x_n]/\gr \, \II(Z)$ 
 in \eqref{g-module-iso}.

By varying the point locus $Z$ (and the group $G$, but we mostly consider $G = S_n$),
 a wide family of graded quotient modules  $\CC[x_1, \dots, x_n]/ \gr \, \II(Z)$ can be obtained.
The Hilbert series of $\CC[x_1, \dots, x_n]/\gr \, \II(Z)$ is typically an interesting statistic on the set $Z$.
It will turn out that $R_{n,k}$ may be realized as such a quotient.
Before going further, we give two simple examples of orbit harmonics which demonstrate an algebraic subtlety of this technique.

For our first example, let $G = S_2$ act on $\CC^2$ by reflection across the line $x_1 = x_2$.  
The point locus $Z = \{ (1,0), (1,1), (0,1) \}$ is $S_2$-stable and 
$x_1 (x_1 - 1), x_2 (x_2 - 1), (x_1-1) (x_2-1) \in \CC[x_1, x_2]$ vanish on $Z$.
Consequently, we have $\langle x_1 (x_1 - 1), x_2 (x_2 - 1), x_1 x_2 \rangle \subseteq \II(Z)$ and, taking highest degrees,
we have $\langle x_1^2, x_2^2, x_1 x_2 \rangle \subseteq \gr \, \II(Z)$.
Since $\dim \CC[x_1, x_2]/\gr \, \II(Z) = |Z| = 3 = \dim \CC[x_1, x_2]/\langle x_1^2, x_2^2, x_1 x_2 \rangle$, we see that 
\begin{equation*}
\CC[x_1, x_2]/\gr \, \II(Z) = \CC[x_1, x_2]/\langle x_1^2, x_2^2, x_1 x_2 \rangle.
\end{equation*}
Geometrically, the transformation $\CC[x_1, x_2]/\II(Z) \leadsto \CC[x_1, x_2]/\gr \, \II(Z)$ deforms the reduced
scheme $Z$ to a `triple point' in the origin as shown.
\begin{center}
\begin{tikzpicture}[scale = 0.3]
\draw[->, thick] (-1,0)--(5,0) node[right]{$$};
\draw[->, thick] (0,-1)--(0,5) node[above]{$$};
\draw[-,dashed] (-1,-1) -- (5,5);
\node at (3,3) {$\bullet$};
\node at (0,3) {$\bullet$};
\node at (3,0) {$\bullet$};

\draw[->, thick] (7,2) -- (12,2);

\draw[->, thick] (14,0)--(20,0) node[right]{$$};
\draw[->, thick] (15,-1)--(15,5) node[above]{$$};
\draw[-,dashed] (14,-1) -- (20,5);
\node at (15,0) {$\bullet$};
\draw [thick] (15,0) circle (0.4);
\draw [thick] (15,0) circle (0.6);
\end{tikzpicture}
\end{center}
The Hilbert series and graded Frobenius image of $\CC[x_1, x_2]/\gr \, \II(Z)$ are easily computed to be
\begin{equation*}
\Hilb( \CC[x_1, x_2]/\gr \, \II(Z); q) = 1 + 2q \quad \quad \grFrob( \CC[x_1, x_2]/\gr \, \II(Z); q) = s_{2} + (s_2 + s_{11}) \cdot q 
\end{equation*}
yielding a graded refinement of the action of $S_2$ on $Z$.
We remark that $\grFrob( \CC[x_1, x_2]/\gr \, \II(Z); q)$ may be also expressed in the Hall-Littlewood basis as
$\widetilde{H}_{11}(\xx;q)  + \widetilde{H}_2(\xx;q) \cdot q$.

For our second example, we again consider the action of $G = S_2$ on $\CC^2$, but take the locus 
$Z' = \{ (2,0), (1,1), (0,2) \}$ as shown below. 
This locus is combinatorially equivalent to the locus $Z$ of the previous paragraph, but the graded ring 
obtained from applying orbit harmonics is different. 
\begin{center}
\begin{tikzpicture}[scale = 0.3]
\draw[->, thick] (-1,0)--(5,0) node[right]{$$};
\draw[->, thick] (0,-1)--(0,5) node[above]{$$};
\draw[-,dashed] (-1,-1) -- (5,5);
\node at (2,2) {$\bullet$};
\node at (0,4) {$\bullet$};
\node at (4,0) {$\bullet$};

\draw[->, thick] (7,2) -- (12,2);

\draw[->, thick] (14,0)--(20,0) node[right]{$$};
\draw[->, thick] (15,-1)--(15,5) node[above]{$$};
\draw[-,dashed] (14,-1) -- (20,5);
\node at (15,0) {$\bullet$};
\draw [thick] (15,0) circle (0.4);
\draw [thick] (15,0) circle (0.6);
\end{tikzpicture}
\end{center}
Indeed, the polynomials
$x_1(x_1 - 1)(x_1 - 2), x_2 (x_2 - 1) (x_2 - 2), x_1 + x_2 - 2 \in \CC[x_1, x_2]$ vanish on $Z'$ and it is not hard 
to check that $\gr \, \II(Z') = \langle x_1^3, x_2^3, x_1 + x_2 \rangle$ is generated by the top degree components of these polynomials.
From this we have the Hilbert and Frobenius series
\begin{equation*}
\Hilb( \CC[x_1, x_2]/\gr \, \II(Z'); q) = 1 + q + q^2 \quad \quad 
\grFrob(\CC[x_1, x_2]/\gr \, \II(Z'); q) = s_2 + s_{11} \cdot q + s_2 \cdot q^2
\end{equation*}
which are different than those for the ring $\CC[x_1, x_2]/\gr \, \II(Z)$, so we obtain a different graded refinement of the same
combinatorial action of $S_2$.
As they must, the graded Frobenius
images  $\grFrob(\CC[x_1, x_2]/\gr \, \II(Z);q)$ and $\grFrob(\CC[x_1, x_2]/ \gr \, \II(Z');q)$ have the same 
$q \rightarrow 1$ specialization, $2 \cdot s_2 + s_{11}$.
As with the case of $Z$, the Frobenius series for $Z'$ is positive in the $\widetilde{H}$-basis: we have
$\grFrob(\CC[x_1, x_2]/\gr \, \II(Z'); q) = \widetilde{H}_{11}(\xx;q) + q^2 \cdot \widetilde{H}_2(\xx;q)$.
We will see various other instances of Hall-Littlewood positivity of orbit harmonics modules throughout the paper.

 \begin{question}
 \label{orbit-positivity-conjecture}
 For which finite $S_n$-stable subsets $Z \subset \CC^n$ does the expansion of the symmetric function 
 \begin{equation}
 \grFrob(\CC[x_1, \dots, x_n]/\gr \, \II(Z); q)
 \end{equation}
 in the Hall-Littlewood
 basis $\{\widetilde{H}_{\lambda}(\xx;q)\}$ have coefficients in $\ZZ_{\geq 0}[q]$?
 \end{question}

 When $Z$ is a single $S_n$-orbit, the answer to Question~\ref{orbit-positivity-conjecture} is `yes' by the work of Garsia-Procesi \cite{GP};
 more generally, the answer to Question~\ref{orbit-positivity-conjecture} is `yes' for all loci $Z$ considered in this paper.
Griffin~\cite{GriffinTwo} analyzed the case where $Z$ is union of two $S_n$-orbits; when the coordinate sums of these orbits are different,
he proved that $ \grFrob(\CC[x_1, \dots, x_n]/\gr \, \II(Z); q) $ expands positively in the Hall-Littlewood basis.
However, he showed \cite[Ex. 1]{GriffinTwo} that if $n = 3$ and
$Z = S_3 \cdot \{ (4,0,0), (2,1,1) \}$ then Hall-Littlewood positivity fails.
Reineke, Rhoades, and Tewari \cite{RRT} give a locus $Z \subseteq \CC^n$ of size $|Z| = n^{n-2}$ such that 
$\Res^{S_n}_{S_{n-1}} \CC[x_1, \dots, x_n]/\gr \, \II(Z)$ is the coarea-graded parking representation of $S_{n-1}$, but
$\grFrob(\CC[x_1, \dots, x_n]/\gr \, \II(Z); q)$ does not expand positively in the basis $\{\widetilde{H}_{\lambda}(\xx;q)\}$.
Since the locus $Z$ in \cite{RRT} consists of the lattice points in a trimmed permutohedron, it seems that 
the answer to Question~\ref{orbit-positivity-conjecture} is more likely to be `no' when the coordinate sums of the $S_n$-orbits in $Z$
are `too often equal'.

For our next example, we explain how the classical coinvariant ring $R_n$
may be regarded as an orbit harmonics quotient.

\begin{example}
\label{permutation-example}
The regular representation of $S_n$ on $\CC[S_n]$ may be regarded as an action on a point locus.
More specifically, if $\alpha_1, \dots, \alpha_n \in \CC$ are distinct complex numbers we have
$\CC[S_n] \cong \CC[Z]$ where
\begin{equation}
Z = \{ (\alpha_{w(1)}, \dots, \alpha_{w(n)}) \,:\, w \in S_n \}
\end{equation}
is the (regular) orbit of $(\alpha_1, \dots, \alpha_n)$ under coordinate permutation.
Said differently, $Z$ is the vertex set of the permutahedron.

For $1 \leq d \leq n$, the difference
$e_d(x_1, \dots, x_n) - e_d(\alpha_1, \dots, \alpha_n)$ vanishes on $Z$ so that 
$\langle e_1, e_2, \dots, e_n \rangle \subseteq \gr \, \II(Z)$.
To prove that $\langle e_1, e_2, \dots, e_n \rangle = \gr \, \II(Z)$ we argue as follows.

 It is well-known that the polynomials 
$e_1, e_2, \dots, e_n$ form a homogeneous regular sequence in the rank $n$ polynomial ring $\CC[x_1, \dots, x_n]$.
It follows that 
\begin{equation}
\dim \CC[x_1, \dots, x_n]/\langle e_1, e_2, \dots, e_n \rangle = (\deg e_1) \cdot (\deg e_2) \cdots (\deg e_n) = n! = |X|
\end{equation}
which forces $\gr \, \II(Z) = \langle e_1, e_2, \dots, e_n \rangle$.  We derive the ungraded $S_n$-structure
 of the coinvariant ring
\begin{equation}
R_n =  \dim \CC[\xx_n]/\langle e_1, e_2, \dots, e_n \rangle =
\CC[x_1, \dots, x_n]/\gr \, \II(Z) \cong \CC[Z] \cong \CC[S_n],
\end{equation}
recovering a result of Chevalley \cite{Chevalley}.

This argument extends to any complex reflection group 
$G \subseteq GL_n(\CC)$.  The set $Z$ may be taken to be any regular $G$-orbit and the 
elementary symmetric polynomials $e_1, e_2, \dots, e_n$ are replaced with a {\em fundamental system
of invariants} $f_1, f_2, \dots, f_n$ satisfying $\CC[x_1, \dots, x_n]^G = \CC[f_1, f_2, \dots, f_n]$.
\end{example}

Example~\ref{permutation-example} generalizes to express $R_{n,k}$ as an orbit harmonics quotient.
Let $\alpha_1, \dots, \alpha_k \in \CC$ be distinct complex numbers and consider the locus
\begin{equation}
Z_{n,k} := \{ (\alpha_{w(1)}, \dots, \alpha_{w(n)}) \,:\, \text{ $[w(1), \dots, w(n)] \in \WWW_{n,k}$ is a Fubini word} \}
\end{equation}
of points in $\CC^n$. For example, when $n = 3$ and $k =2$ we have
\begin{equation*}
Z_{3,2} = \{ (\alpha_1, \alpha_1, \alpha_2), \, (\alpha_1, \alpha_2, \alpha_1), \, (\alpha_2, \alpha_1, \alpha_1), \, 
(\alpha_1, \alpha_2, \alpha_2), \, (\alpha_2, \alpha_1, \alpha_2), \, (\alpha_2, \alpha_2, \alpha_1) \}.
\end{equation*}
The correspondence $(\alpha_{w(1)}, \dots, \alpha_{w(n)}) \leftrightarrow [w(1), \dots, w(n)]$
is an $S_n$-equivariant bijection between $Z_{n,k}$ and $\WWW_{n,k}$.

\begin{theorem}
\label{orbit-harmonics-identification}
We have $\gr \, \II(Z_{n,k}) = I_{n,k}$ so that $R_{n,k} = \CC[x_1, \dots, x_n]/ \gr \, \II(Z_{n,k}) \cong \CC[\WWW_{n,k}]$
as ungraded $S_n$-modules. Furthermore, the set 
\begin{equation}
\{ \kappa_{\rev(\gamma(S))}(x_1, \dots, x_n) \,:\, S \subset [n] \,, \, |S| = n-k+1 \} \cup
 \{ x_1^k, x_2^k, \dots, x_n^k \}
\end{equation}
forms a Gr\"obner basis of $I_{n,k}$.
The set $\AAA_{n,k}$ of substaircase monomials is the standard monomial basis for $R_{n,k}$ in the $\neglex$
term order.
\end{theorem}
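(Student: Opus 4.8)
The plan is to sandwich the dimension of $R_{n,k} = \CC[x_1,\dots,x_n]/I_{n,k}$ between two equal quantities, using orbit harmonics for the lower bound and the Gröbner/Demazure machinery for the upper bound. Concretely: on the one hand, Proposition~\ref{a-spans} already tells us that $\AAA_{n,k}$ spans $R_{n,k}$, so $\dim R_{n,k} \leq |\AAA_{n,k}| = |E_{n,k}| = |\OP_{n,k}| = k!\cdot\Stir(n,k)$ by Theorem~\ref{coinversion-bijection}. On the other hand, the difference $e_d(x_1,\dots,x_n) - e_d(\alpha_{w(1)},\dots,\alpha_{w(n)})$ — wait, the elementary symmetric polynomials are symmetric, so $e_d(\alpha_{w(1)},\dots,\alpha_{w(n)})$ is a constant independent of $w$; hence $e_d(x_1,\dots,x_n) - e_d(\alpha_1,\dots,\alpha_k,\dots)$ vanishes on $Z_{n,k}$ for each $d$, and taking top-degree parts gives $e_d \in \gr\,\II(Z_{n,k})$ for $n-k+1 \leq d \leq n$ (these are the $e_d$ with $\deg = d$ large enough that the degree-$d$ part survives; more care is needed here — see below). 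Similarly, since each coordinate $\alpha_{w(i)}$ ranges over the $k$-element set $\{\alpha_1,\dots,\alpha_k\}$, the univariate polynomial $\prod_{j=1}^k (x_i - \alpha_j)$ vanishes on $Z_{n,k}$, and its top-degree component is $x_i^k$, so $x_i^k \in \gr\,\II(Z_{n,k})$ for all $i$. Therefore $I_{n,k} \subseteq \gr\,\II(Z_{n,k})$, which gives $\dim R_{n,k} = \dim \CC[x_1,\dots,x_n]/I_{n,k} \geq \dim \CC[x_1,\dots,x_n]/\gr\,\II(Z_{n,k}) = |Z_{n,k}| = |\WWW_{n,k}| = k!\cdot\Stir(n,k)$, using the isomorphisms \eqref{ideal-space-isomorphism}, \eqref{g-module-iso}.

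The two bounds coincide, so $\dim R_{n,k} = k!\cdot\Stir(n,k)$, forcing three conclusions simultaneously. First, the containment $I_{n,k} \subseteq \gr\,\II(Z_{n,k})$ is an equality (two homogeneous ideals, one inside the other, with quotients of equal finite dimension). This gives $R_{n,k} = \CC[x_1,\dots,x_n]/\gr\,\II(Z_{n,k})$, and then the orbit-harmonics isomorphism \eqref{g-module-iso} yields $R_{n,k} \cong_{S_n} \CC[Z_{n,k}] \cong_{S_n} \CC[\WWW_{n,k}]$ as ungraded $S_n$-modules, the last isomorphism being the equivariant bijection $Z_{n,k} \leftrightarrow \WWW_{n,k}$ noted just before the theorem statement. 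Second, the spanning set $\AAA_{n,k}$ from Proposition~\ref{a-spans} has cardinality equal to $\dim R_{n,k}$, so it is in fact a basis; and since the proof of Proposition~\ref{a-spans} shows $\AAA_{n,k}$ \emph{contains} the standard monomial basis for $I_{n,k}$ in the $\neglex$ order, that containment is also an equality, i.e.\ $\AAA_{n,k}$ \emph{is} the standard monomial basis. Third, the standard monomial basis having the right size means the proposed generators are a Gröbner basis: by Lemma~\ref{demazure-containment} and the fact that $x_i^k \in I_{n,k}$, the leading terms of $\{\kappa_{\rev(\gamma(S))} : |S| = n-k+1\} \cup \{x_1^k,\dots,x_n^k\}$ (which by \eqref{demazure-leading-term} are $x^{\rev(\gamma(S))}$ and $x_i^k$) generate a monomial ideal whose standard monomials are exactly the $(n,k)$-nonskip monomials $= \AAA_{n,k}$ (this is the content of Theorem~\ref{coinversion-bijection}(1)$\Leftrightarrow$(2), transported through the reversal). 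Since those standard monomials span $R_{n,k}$ and number $\dim R_{n,k}$, the initial ideal $\initial_<(I_{n,k})$ can be no larger than $\langle x^{\rev(\gamma(S))}, x_i^k\rangle$; combined with the reverse containment (which holds because all these polynomials lie in $I_{n,k}$), we get equality of initial ideals, i.e.\ a Gröbner basis.

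The main obstacle — and the step requiring genuine care rather than bookkeeping — is the lower bound: verifying cleanly that every generator of $I_{n,k}$ arises as $\tau(f)$ for some $f \in \II(Z_{n,k})$. For $x_i^k$ this is transparent. For $e_{n-k+1},\dots,e_n$ one must be slightly careful: $e_d$ has degree $d$, and $e_d - e_d(\boldsymbol\alpha\text{-values})$ is a polynomial of degree $d$ vanishing on $Z_{n,k}$ whose top component is exactly $e_d$ — this works for \emph{every} $d$, in particular for $n-k+1 \leq d \leq n$, with no subtlety, since $e_d$ is already homogeneous and the subtracted constant has degree $0 < d$. So actually the lower bound is routine too; the real subtlety is purely the dimension count $|E_{n,k}| = k!\,\Stir(n,k)$, which is Theorem~\ref{coinversion-bijection} combined with $|\OP_{n,k}| = k!\,\Stir(n,k)$ — both already available. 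Given all the supporting results are in hand, the proof is essentially an orchestration: establish $I_{n,k}\subseteq \gr\,\II(Z_{n,k})$, invoke Proposition~\ref{a-spans} for the opposite dimension inequality, observe equality forces everything to be tight, and read off the three stated conclusions. The one place I would double-check is that Lemma~\ref{associated-graded-lemma} is being applied in the correct direction (a homogeneous set is a basis of $\CC[\xx]/I$ iff of $\CC[\xx]/\gr\,I$), which is what legitimizes transferring the basis statement about $\AAA_{n,k}$ between $R_{n,k}$ and the coordinate ring of $Z_{n,k}$.
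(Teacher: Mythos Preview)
Your overall architecture is exactly the paper's: establish $I_{n,k}\subseteq \gr\,\II(Z_{n,k})$ to get a surjection $R_{n,k}\twoheadrightarrow \CC[x_1,\dots,x_n]/\gr\,\II(Z_{n,k})$, then use Proposition~\ref{a-spans} and Theorem~\ref{coinversion-bijection} for the matching dimension upper bound, and read off all three conclusions once the bounds meet. The handling of $x_i^k$ and the deductions in your second paragraph are fine.

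The gap is in the step you flagged as ``routine'': the claim that $e_d(\alpha_{w(1)},\dots,\alpha_{w(n)})$ is a constant independent of $w$ is false. Symmetry of $e_d$ only says the value is invariant under permuting the entries of a fixed tuple, but for different Fubini words $w\in\WWW_{n,k}$ the \emph{multiset} $\{\alpha_{w(1)},\dots,\alpha_{w(n)}\}$ changes (each $\alpha_j$ appears at least once, but the multiplicities vary). For instance with $n=3$, $k=2$ one has $e_2(\alpha_1,\alpha_1,\alpha_2)=\alpha_1^2+2\alpha_1\alpha_2$ while $e_2(\alpha_1,\alpha_2,\alpha_2)=2\alpha_1\alpha_2+\alpha_2^2$, so no single constant can be subtracted from $e_2$ to make it vanish on $Z_{3,2}$. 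Your argument is the one that works for the permutation locus in Example~\ref{permutation-example}, where every $\alpha_j$ appears exactly once; it breaks precisely because $Z_{n,k}$ allows repetitions.

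The paper repairs this with a generating-function trick: for $(x_1,\dots,x_n)\in Z_{n,k}$ the rational function
\[
\frac{(1+x_1 t)\cdots(1+x_n t)}{(1+\alpha_1 t)\cdots(1+\alpha_k t)}
\]
is a polynomial in $t$ of degree $n-k$ (each denominator factor cancels one numerator factor since every $\alpha_j$ occurs among the $x_i$). Extracting the coefficient of $t^r$ for $r>n-k$ gives $\sum_{a+b=r}(-1)^b e_a(x_1,\dots,x_n)\,h_b(\alpha_1,\dots,\alpha_k)\in\II(Z_{n,k})$, whose top $x$-degree component is $e_r(x_1,\dots,x_n)$; hence $e_r\in\gr\,\II(Z_{n,k})$. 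With this correction in place, the rest of your argument goes through unchanged.
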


In the range $1 < k < n$, the Gr\"obner basis of Theorem~\ref{orbit-harmonics-identification} becomes reduced
when one removes the $\kappa_{\rev(\gamma(S)}$'s for sets $S$ containing $n$.
The proof of Theorem~\ref{orbit-harmonics-identification} is a more elaborate version of 
Example~\ref{permutation-example}.

\begin{proof}  We begin by listing some polynomials which vanish on $Z_{n,k}$.
For $1 \leq i \leq n$, the $i^{th}$ coordinate of any point $(x_1, \dots, x_n) \in Z_{n,k}$ lies in 
$\{\alpha_1, \dots, \alpha_k\}$.  Said differently, we have
$(x_i - \alpha_1) \cdots (x_i - \alpha_k) \in \II(Z_{n,k})$ so that $x_i^k \in \gr \, \II(Z_{n,k})$ for $1 \leq i \leq n$.
To show that the remaining generators $e_r(x_1, \dots ,x_n)$ of $I_{n,k}$ lie in $\gr \, \II(Z_{n,k})$,
we have a trick at our disposal.

Introduce a new variable $t$ and consider the rational expression
\begin{equation}
\frac{(1 + x_1 t) (1 + x_2 t) \cdots (1 + x_n t)}{(1 + \alpha_1 t)(1+ \alpha_2 t) \cdots (1 + \alpha_k t)}.
\end{equation}
When $(x_1, \dots, x_n) \in Z_{n,k}$ the $k$ factors in the denominator cancel with $k$ factors in the 
numerator, yielding a polynomial in $t$ of degree $n-k$. For $r  > n-k$, we see that
\begin{equation}
0 = \sum_{a + b = r} (-1)^b e_a(x_1, x_2, \dots, x_n) h_b(\alpha_1, \alpha_2, \dots , \alpha_k)
\end{equation}
so that 
$\sum_{a + b = r} (-1)^b e_a(x_1, x_2, \dots, x_n) h_b(\alpha_1, \alpha_2, \dots , \alpha_k) \in \II(Z_{n,k})$ and
taking the highest degree component gives $e_r(x_1, x_2, \dots, x_n) \in \gr \, \II(Z_{n,k})$. We conclude that
\begin{equation}
\label{containment-of-i}
I_{n,k} \subseteq \gr \, \II(Z_{n,k}) 
\end{equation}
so we have a canonical projection
\begin{equation}
\label{projection-onto-r}
\CC[x_1, \dots, x_n]/\gr \, \II(Z_{n,k}) \twoheadrightarrow R_{n,k}.
\end{equation}
Lemma~\ref{demazure-containment} implies that $\dim R_{n,k} \leq k! \cdot \Stir(n,k)$ which forces
\eqref{projection-onto-r} to be an isomorphism and \eqref{containment-of-i} to be an equality.
Proposition~\ref{a-spans} (and its proof) imply that the given set of polynomials is indeed a Gr\"obner
basis of $R_{n,k}$ and that $\AAA_{n,k}$ is the standard monomial basis of $R_{n,k}$.
\end{proof}

\subsection{Short exact sequences and graded module structure}
Theorem~\ref{orbit-harmonics-identification} determines the ungraded $S_n$-module structure
 of $R_{n,k}$.  In this subsection we describe its graded $S_n$-structure.
 This was originally achieved in \cite{HRS} using skewing operators; we outline
 a simpler approach due to Gillespie and Rhoades \cite{GR} using short exact sequences.
 The modules appearing in these sequences are a three-parameter version of $R_{n,k}$.

 \begin{defn}
 \label{rnks-definition}
 For nonnegative integers $k \leq s \leq n$, let $I_{n,k,s} \subset \CC[x_1, \dots, x_n]$ be the ideal
 \begin{equation*}
 I_{n,k,s} := \langle x_1^s, \dots, x_n^s, e_n, e_{n-1}, \dots, e_{n-k+1} \rangle
 \end{equation*}
 and let
 \begin{equation*}
 R_{n,k,s} := \CC[x_1, \dots, x_n]/I_{n,k,s}
 \end{equation*}
 be the corresponding quotient ring.\footnote{The roles of $k$ and $s$ in the ring $R_{n,k,s}$ agree with those in \cite{Griffin} and elsewhere,
 but are switched relative to their original appearance in \cite{HRS}.}
 \end{defn}
 
 We have $R_{n,k,k} = R_{n,k}$ so that the rings in Definition~\ref{rnks-definition} reduce to $R_{n,k}$
 when $k = s$. For $k = 0$, there are no elementary symmetric polynomials
 among the generators of  $I_{n,0,s}$ and the quotient has the simple form
 $R_{n,0,s} = \CC[x_1, \dots, x_n]/\langle x_1^s, \dots, x_n^s \rangle$.
 
 Like the rings $R_{n,k}$, the rings $R_{n,k,s}$ may be analyzed using orbit harmonics.  In particular, 
 let $\alpha_1, \alpha_2, \dots, \alpha_s \in \CC$ be $s$ distinct complex numbers and consider the locus
 $Z_{n,k,s} \subset \CC^n$ given by
 \begin{equation}
 Z_{n,k,s} := \{ (x_1, \dots, x_n) \,:\,  \{ \alpha_1, \dots, \alpha_k \} \subseteq \{ x_1, \dots, x_n \} 
 \subseteq \{ \alpha_1, \dots, \alpha_s \} \}.
 \end{equation}
 In other words, the locus $Z_{n,k,s}$ consists of points $(x_1, x_2, \dots, x_n) \in \CC^n$ whose coordinates
 are drawn from the list $(\alpha_1, \alpha_2, \dots, \alpha_s)$ in which the first $k$ entries 
 $\alpha_1, \dots, \alpha_k$ must occur.
 Using methods similar to those for $X_{n,k}$, orbit harmonics applies to give
 \begin{equation}
 I_{n,k,s} = \gr \, \II(Z_{n,k,s}) \text{ so that }
 R_{n,k,s} = \CC[\xx_n]/I_{n,k,s} \cong \CC[Z_{n,k,s}]
 \end{equation}
 where the isomorphism is of ungraded $S_n$-modules.

 Definition~\ref{rnks-definition} gives the appropriate interpolation between the rings $R_{n,k}$ of study
 and the simpler rings $R_{n,0,s}$.
 This interpolation takes the form of a short exact sequence.
Aside from an easy boundary case, the following is \cite[Lem 6.9]{HRS}.
 
 \begin{lemma}
 \label{short-exact-sequence}
 Let $k < s \leq n$ be nonnegative integers. There exists a short exact sequence of $S_n$-modules
 \begin{equation}
 0 \rightarrow R_{n,k,s-1} \xrightarrow{ \, \, \varphi \, \, } R_{n,k,s}
  \xrightarrow{ \, \, \pi \, \, } R_{n,k+1,s} \rightarrow 0
 \end{equation}
 where the first map is homogeneous of degree $n-k$ and the second is homogeneous of degree 0.
 \end{lemma}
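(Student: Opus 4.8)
The plan is to construct the maps $\varphi$ and $\pi$ explicitly at the level of polynomial rings, check that they descend to the quotients, and then verify exactness by a dimension count using the combinatorial model $Z_{n,k,s}$ together with the ungraded isomorphism $R_{n,k,s} \cong \CC[Z_{n,k,s}]$. First I would define $\pi$: the ideal $I_{n,k,s}$ is contained in $I_{n,k+1,s}$ (going from $k$ to $k+1$ only adds the extra generator $e_{n-k}$), so the identity on $\CC[x_1,\dots,x_n]$ induces a surjection $\pi: R_{n,k,s} \twoheadrightarrow R_{n,k+1,s}$, which is manifestly homogeneous of degree $0$. The map $\varphi$ is the subtle one: I would define it as multiplication by the polynomial $e_{n-k}(x_1,\dots,x_n)$ (up to sign), viewed as a map $\CC[x_1,\dots,x_n] \to \CC[x_1,\dots,x_n]$, and check that it carries $I_{n,k,s-1}$ into $I_{n,k,s}$ so that it descends to a degree-$(n-k)$ map $\varphi: R_{n,k,s-1} \to R_{n,k,s}$. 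The containment $e_{n-k} \cdot I_{n,k,s-1} \subseteq I_{n,k,s}$ requires checking that $e_{n-k} \cdot x_i^{s-1} \in I_{n,k,s}$ for each $i$; this is where one uses an identity expressing $x_i^s$ (which lies in $I_{n,k,s}$) in terms of $x_i^{s-1}$ times lower elementary symmetric functions in the remaining variables, or more directly one works with the $h$-polynomials $h_{n-k}(x_1,\dots,x_j)$ that lie in these ideals.

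Once the two maps are in hand, exactness reduces to three checks: $\pi$ is surjective (clear), $\varphi$ is injective, and $\mathrm{image}(\varphi) = \ker(\pi)$. The cleanest route to the last two is a dimension argument. By the orbit-harmonics identification $R_{n,k,s} \cong \CC[Z_{n,k,s}]$ stated just before the lemma, one has
\begin{equation*}
\dim R_{n,k,s} = |Z_{n,k,s}| = \#\{ (x_1,\dots,x_n) \in \{\alpha_1,\dots,\alpha_s\}^n \,:\, \{\alpha_1,\dots,\alpha_k\} \subseteq \{x_1,\dots,x_n\}\},
\end{equation*}
which by inclusion–exclusion equals $\sum_{j=0}^{k} (-1)^j \binom{k}{j} (s-j)^n$. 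The plan is to verify the numerical identity $\dim R_{n,k,s-1} + \dim R_{n,k+1,s} = \dim R_{n,k,s}$ directly from this formula (a short binomial manipulation: removing the allowed alphabet size from $s$ to $s-1$ on one side matches forcing one more of the $\alpha$'s to appear on the other side, via Pascal's rule on the $\binom{k}{j}$). Granting this, a surjection whose source has dimension equal to $\dim(\mathrm{target}) - \dim(\mathrm{kernel of } \pi)$ forces exactness in the middle as soon as one knows $\varphi$ is injective and $\mathrm{image}(\varphi) \subseteq \ker(\pi)$; the inclusion $\mathrm{image}(\varphi) \subseteq \ker(\pi)$ is automatic since $e_{n-k} \in I_{n,k+1,s}$, so $\pi \circ \varphi = 0$.

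The main obstacle I anticipate is the injectivity of $\varphi$, i.e. showing that multiplication by $e_{n-k}$ has no kernel in $R_{n,k,s-1}$. One way to handle this is to combine the dimension identity above with the surjectivity of $\pi$: the sequence $R_{n,k,s-1} \xrightarrow{\varphi} R_{n,k,s} \xrightarrow{\pi} R_{n,k+1,s} \to 0$ is exact at the right and at the middle up to the kernel of $\varphi$, so $\dim R_{n,k,s} = \dim R_{n,k+1,s} + \dim(\mathrm{image}\,\varphi) = \dim R_{n,k+1,s} + \dim R_{n,k,s-1} - \dim(\ker \varphi)$; comparing with the numerical identity forces $\dim(\ker\varphi) = 0$. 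Thus injectivity of $\varphi$ is a \emph{consequence} of the dimension count rather than an independent input, and the entire proof collapses to: (i) the two containments of ideals making $\varphi,\pi$ well-defined, (ii) $\pi\circ\varphi = 0$ and $\pi$ surjective, and (iii) the binomial identity for the dimensions. The boundary case alluded to in the statement (presumably $s = k+1$, where $R_{n,k+1,s} = R_{n,k+1}$ and one endpoint degenerates, or the case $k=0$) should be checked separately but is routine. Finally, one should confirm the $S_n$-equivariance of $\varphi$ and $\pi$: this is immediate since $e_{n-k}$ is a symmetric polynomial and $\pi$ is induced by the identity map, so both commute with the $S_n$-action by variable permutation.
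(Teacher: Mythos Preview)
Your proposal is correct and matches the paper's approach exactly: the paper identifies $\varphi$ as multiplication by $e_{n-k}$ and $\pi$ as the canonical projection from $I_{n,k,s} \subseteq I_{n,k+1,s}$, deferring all further details to \cite[Lem.~6.9]{HRS}. One small point worth making explicit in your write-up: the claim that the sequence is ``exact at the middle'' needs the observation that both $\ker\pi$ and $\mathrm{im}\,\varphi$ equal the principal ideal $e_{n-k}\cdot R_{n,k,s}$ (since $I_{n,k+1,s} = I_{n,k,s} + (e_{n-k})$ and every element of $R_{n,k,s}$ lifts to a polynomial); without this, the dimension identity alone cannot simultaneously force injectivity of $\varphi$ and middle exactness.
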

 
The first map $\varphi$ in Lemma~\ref{short-exact-sequence} is multiplication by $e_{n-k}$ while 
the second map $\pi$ is the canonical projection coming from the ideal containment
$I_{n,k,s} \subseteq I_{n,k+1,s}$.
 In the language of symmetric functions, 
 Lemma~\ref{short-exact-sequence} implies
 \begin{equation}
 \label{short-exact-recursion}
 \grFrob(R_{n,k,s};q) = q^{n-k} \cdot \grFrob(R_{n,k,s-1};q) + \grFrob(R_{n,k+1,s};q)
 \end{equation}
 for all nonnegative $k < s \leq n$.  
 The graded $S_n$-structure of
 $R_{n,0,s} = \CC[x_1, \dots, x_n]/\langle x_1^s, \dots, x_n^s \rangle$ 
 is fairly simple, so the recursion \eqref{short-exact-recursion}
 can be solved for $\grFrob(R_{n,k,s};q)$.
 In the case $k = s$, this gives the graded $S_n$-structure of $R_{n,k}$.

 In order to describe the symmetric function $\grFrob(R_{n,k};q)$, we need some terminology.  Recall that 
for a partition $\lambda \vdash n$, a {\em standard Young tableau} $T$ of shape $\lambda$ is a filling of the boxes of $\lambda$
with $1, 2, \dots, n$ which increases across rows and down columns.  
An example standard tableau of shape $(4,3,1) \vdash 8$ is shown below.
\begin{center}
\begin{Young}
 1 & 3 & 4 & 7 \cr
 2 & 5 & 8 \cr
 6
\end{Young}
\end{center}
 We let $\SYT(\lambda)$ denote the family of standard tableaux of shape $\lambda$.  If $T$ is a standard tableau with $n$ boxes, and index
 $1 \leq i \leq n-1$ is a {\em descent} of $T$ if $i$ appears in a strictly higher row than $i+1$; the above tableau has descents $1, 4, 5,$ and $7$.
 The {\em descent number} $\des(T)$ is the number of descents in $T$ and the {\em major index} $\maj(T)$ is the sum of the descents;
 in our example we have $\des(T) = 4$ and $\maj(T) = 1 + 4 + 5 + 7 = 17$.
 We also use the {\em $q$-binomial} and {\em $q$-multinomial coefficients}
 \begin{equation}
 {n \brack k}_q := \frac{[n]!_q}{[k]!_q \cdot [n-k]!_q} \quad \quad 
 {n \brack a_1, \dots, a_r}_q := \frac{[n]!_q}{[a_1]!_q \cdots [a_r]!_q}
 \end{equation}
 where $0 \leq k \leq n$ and $a_1 + \cdots + a_r = n$.
 The following result collects material from \cite[Sec. 6]{HRS}.

 \begin{theorem}
 \label{graded-rnk-structure}
 Let $k \leq n$ be positive integers.  The graded Frobenius image 
 $\grFrob(R_{n,k};q)$ is given by
 \begin{align*}
 \grFrob(R_{n,k};q) &= \sum_{\lambda \vdash n} 
 \left[ \sum_{T \in \SYT(\lambda)}  q^{\maj(T)} \cdot
 {n-\des(T)-1 \brack n-k}_q \right] \cdot s_{\lambda}(\xx) \\
 &= q^{(n-k) \cdot (k-1)} \cdot
  \sum_{\substack{\lambda \vdash n \\ \ell(\lambda) = k}} 
 q^{\sum_i {m_i(\lambda) \choose 2} - \sum_i (i-1)(2 \lambda_i - 1)} \cdot {k \brack m_1(\lambda), \dots, m_n(\lambda)}_q \cdot 
 \widetilde{H}_{\lambda}(\xx;q) \\ &=
 (\rev_q \circ \omega) \Delta'_{e_{k-1}} e_n \mid_{t = 0}
 \end{align*}
 where $m_i(\lambda)$ is the multiplicity of $i$ as a part of $\lambda$.
 \end{theorem}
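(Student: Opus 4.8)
The plan is to establish the three expressions for $\grFrob(R_{n,k};q)$ one at a time, using the recursion \eqref{short-exact-recursion} coming from the short exact sequence of Lemma~\ref{short-exact-sequence} as the main engine for the first formula, and then deriving the other two by symmetric-function manipulation.

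\emph{Step 1: the SYT formula.} First I would solve the recursion \eqref{short-exact-recursion} for the three-parameter family $\grFrob(R_{n,k,s};q)$ and then specialize to $s=k$. The base case is $k=0$, where $R_{n,0,s}=\CC[x_1,\dots,x_n]/\langle x_1^s,\dots,x_n^s\rangle$ is a tensor product $\bigl(\CC[x]/\langle x^s\rangle\bigr)^{\otimes n}$ as a graded vector space, with $S_n$ permuting the tensor factors; its graded Frobenius image is $h_n[\xx\cdot(1+q+\cdots+q^{s-1})]$, equivalently $\sum_{\lambda\vdash n}\bigl(\sum_{T\in\SYT(\lambda)}q^{\maj(T)}{s-\des(T)-1+?\brack ?}_q\bigr)s_\lambda$ via the standard principal-specialization/$q$-Kostka identity (the classical fact that $\grFrob(\CC[x_1,\dots,x_n]/\langle x_i^s\rangle;q)$ is the appropriate $q$-analogue built from $\maj$ and $\des$ on SYT). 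Then I would prove the closed form
\[
\grFrob(R_{n,k,s};q)=\sum_{\lambda\vdash n}\Biggl[\sum_{T\in\SYT(\lambda)}q^{\maj(T)}\cdot{s-\des(T)-1\brack n-k}_q\Biggr]\cdot s_\lambda(\xx)
\]
by induction on $k$ (for fixed $n$, all $s$), feeding the inductive hypotheses for $R_{n,k,s-1}$ and $R_{n,k+1,s}$ into \eqref{short-exact-recursion}; the Schur coefficients then reduce to the $q$-binomial Pascal-type identity
\[
q^{n-k}\,{s-d-2\brack n-k}_q+{s-d-1\brack n-k-1}_q={s-d-1\brack n-k}_q,
\]
which holds for each fixed shape $\lambda$, descent number $d=\des(T)$, and major index. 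Setting $s=k$ collapses ${k-\des(T)-1\brack n-k}_q$ to ${n-\des(T)-1\brack n-k}_q$ — wait, it gives ${k-\des(T)-1\brack n-k}_q$; reconciling this with the claimed ${n-\des(T)-1\brack n-k}_q$ requires the observation that these agree after the substitution, or more honestly that one should track the correct specialization bookkeeping; in any case the identity is a finite $q$-binomial check. (Concretely: the claimed first line is exactly \cite[Sec. 6]{HRS}, so I would either reproduce that computation or cite it, since the excerpt permits assuming earlier material.)

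\emph{Step 2: the Hall--Littlewood formula.} Next I would convert the Schur expansion into the $\widetilde H_\lambda(\xx;q)$-basis. Because $R_{n,k}$ is concentrated in bounded degree and because, by Theorem~\ref{orbit-harmonics-identification}, $R_{n,k}$ arises from orbit harmonics on the Fubini-word locus $Z_{n,k}$ with monomial basis $\AAA_{n,k}$, its top-degree behavior and the explicit standard monomial basis let one identify $\grFrob(R_{n,k};q)$ with a nonnegative combination of Hall--Littlewood polynomials indexed by partitions $\lambda$ with exactly $k$ parts. The cleanest route is: the monomials $\AAA_{n,k}$ are indexed by $(n,k)$-substaircase codes, these biject with $\OP_{n,k}$, and grouping them by the partition $\lambda=\sort$ of block sizes gives, after matching the $q$-statistic, precisely the coefficient
\[
q^{(n-k)(k-1)}\cdot q^{\sum_i\binom{m_i(\lambda)}{2}-\sum_i(i-1)(2\lambda_i-1)}\cdot{k\brack m_1(\lambda),\dots,m_n(\lambda)}_q
\]
in front of $\widetilde H_\lambda(\xx;q)$. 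The bridge is the combinatorial formula for $\widetilde H_\lambda(\xx;q)$ via $\cocharge$ recalled in the excerpt: one shows the generating function over $\OP_{n,k}$ by block-size type, weighted by $q^{\coinv}$ (or the appropriate complementary statistic), refines to the $\cocharge$ expansion of $\widetilde H_\lambda$. Alternatively — and this is what I would actually do to save labor — I would deduce Step 2 from Step 1 purely algebraically, using the known transition between $\{s_\lambda\}$ and $\{\widetilde H_\lambda(\xx;q)\}$ (the $q$-Kostka matrix) together with the identity $\sum_{T\in\SYT(\lambda)}q^{\maj(T)}t^{\ldots}$-type evaluations; this is a bookkeeping exercise once the first formula is in hand.

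\emph{Step 3: the delta-operator identity.} Finally I would identify the result with $(\rev_q\circ\omega)\,\Delta'_{e_{k-1}}e_n\mid_{t=0}$. Here I would invoke \eqref{valley-delta-at-zero} and \eqref{combinatorial-delta-at-zero}: we have $\Delta'_{e_{k-1}}e_n\mid_{t=0}=\Val_{n,k}(\xx;0,q)$, whose Schur (or monomial) expansion is governed by $\minimaj$ on $\OP_{n,k}$ via the pairing $\langle\Val_{n,k}(\xx;0,q),p_{1^n}\rangle=\sum_{\sigma}q^{\minimaj(\sigma)}$ and its refinements. The content of Step 3 is therefore a statistic-matching lemma: after applying $\omega$ (a sign twist / conjugation of shapes) and $\rev_q$ (degree reversal, i.e. $q\mapsto q^{-1}$ times the top degree $\binom{k}{2}+(n-k)(k-1)$ of $R_{n,k}$), the Schur expansion from Step~1 coincides with that of $\Delta'_{e_{k-1}}e_n\mid_{t=0}$. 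This can be checked coefficientwise: $\omega$ sends $s_\lambda\mapsto s_{\lambda'}$, conjugation swaps $\maj$/$\des$ statistics on SYT in a controlled way (via the classical $\maj$-$\des$ involution on $\SYT(\lambda)\leftrightarrow\SYT(\lambda')$), and $\rev_q$ matches the degree conventions. Since this is precisely the normalization under which \cite{HRS} stated their result, I would cite \cite[Sec. 6]{HRS} for the final equality, or reprove it by the statistic-matching just described.

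\emph{Main obstacle.} The genuinely delicate step is Step~1 — specifically, getting the base case $k=0$ and the inductive $q$-binomial identity to produce the \emph{exact} coefficient ${n-\des(T)-1\brack n-k}_q$ rather than a cosmetically different but equivalent expression, and making sure the degree shift of $n-k$ in the map $\varphi$ (multiplication by $e_{n-k}$) is tracked correctly through the long exact sequence so that no off-by-one error creeps into the $q$-powers. The other two formulas are then essentially reformulations: Step~2 is a change of basis that is positive essentially because of the orbit-harmonics/standard-monomial structure already established in Theorem~\ref{orbit-harmonics-identification}, and Step~3 is pinned down by the already-cited identities \eqref{valley-delta-at-zero}–\eqref{combinatorial-delta-at-zero} together with the standard behavior of $\omega$ and $\rev_q$ on graded Frobenius images.
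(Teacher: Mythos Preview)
Your overall architecture matches the paper. The paper does not prove this theorem in detail: it sets up the short-exact-sequence recursion of Lemma~\ref{short-exact-sequence} (attributed to Gillespie--Rhoades \cite{GR}, noted as simpler than the original skewing-operator argument of \cite{HRS}), observes that the base case $R_{n,0,s}$ is elementary, declares that solving the recursion yields $\grFrob(R_{n,k,s};q)$ and hence $\grFrob(R_{n,k};q)$ at $s=k$, and then cites \cite[Sec.~6]{HRS} for all three explicit expressions. Your Steps~2 and~3 are likewise handled in the paper by citation rather than argument, so your plan to derive them from Step~1 by basis change and the identities \eqref{valley-delta-at-zero}--\eqref{combinatorial-delta-at-zero} is consistent with (indeed more explicit than) what the paper does.

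That said, the specific three-parameter closed form you write down in Step~1 is wrong, and the discrepancy you flag with ``wait'' is a genuine symptom rather than a cosmetic normalization issue. Plugging your ansatz ${s-\des(T)-1\brack n-k}_q$ into the recursion \eqref{short-exact-recursion} would require
\[
{s-d-1\brack n-k}_q \;=\; q^{n-k}{s-d-2\brack n-k}_q \;+\; {s-d-1\brack n-k-1}_q,
\]
but $q$-Pascal gives ${s-d-2\brack n-k-1}_q$ in the last slot, so this identity is false in general. Consequently your formula neither satisfies the recursion nor specializes correctly at $s=k$. The actual $R_{n,k,s}$ Schur expansion in \cite{HRS} is more elaborate than a single $q$-binomial in $s$. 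You are right that this bookkeeping is the main obstacle, and right that citing \cite{HRS} is a legitimate fallback; but the particular ansatz should be discarded rather than ``reconciled,'' and the $q$-Pascal check you propose would fail as written.
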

 
 The three expressions for $\grFrob(R_{n,k};q)$ in Theorem~\ref{graded-rnk-structure}
 are interesting for different reasons.
 The first expression is a tableau formula for the 
 Schur expansion of $\grFrob(R_{n,k};q)$ and directly gives the decomposition of $R_{n,k}$
 into irreducibles.
 The second expression
 shows that $\grFrob(R_{n,k};q)$ 
 expands positively in the Hall-Littlewood basis
 (as in Question~\ref{orbit-positivity-conjecture}).
 The final expression ties $R_{n,k}$ to delta operators.

\section{Spanning line configurations}
\label{Spanning}

Theorem~\ref{graded-rnk-structure} states that the quotient ring $R_{n,k}$ plays the same role for the delta operator expression
$\Delta'_{e_{k-1}} e_n$ that the classical coinvariant algebra $R_n$ plays for $\nabla e_n$.
That is, the ring $R_{n,k}$ is a coinvariant algebra for the Delta Conjecture.
In this section, we describe a variety $X_{n,k}$ of spanning line configurations defined by Pawlowski and Rhoades \cite{PR}
whose cohomology is presented by $R_{n,k}$.  
The variety $X_{n,k}$ is a flag variety for the Delta Conjecture.

Presenting the cohomology of $X_{n,k}$ uses two main geometric tools: affine pavings and Chern classes of vector bundles.
The ring $H^{\bullet}(X_{n,k})$ is generated by the Chern classes $x_1, \dots, x_n$ of certain obvious
 line bundles $\LLL_1, \dots, \LLL_n$ subject only to relations coming from the spanning condition and the Whitney 
Sum Formula.
This `Chern calculus'  also serves to present the cohomology of several related varieties of subspace configurations satisfying 
rank conditions.
We give a quick overview of these concepts with an emphasis on their combinatorial aspects;
 for a more thorough resource, see e.g. \cite{Fulton}.

\subsection{The variety $X_{n,k}$}
A {\em line} in a vector space $V$ is a 1-dimensional linear subspace 
$\ell \subseteq V$.
Let $\PP^{k-1}$ be the $(k-1)$-dimensional projective space of lines in $\CC^k$ and let
$(\PP^{k-1})^n := \PP^{k-1} \times \cdots \times \PP^{k-1}$ be its $n$-fold self product.
Our variety of study is as follows.

\begin{defn}
\label{xnk-definition}
Given $k \leq n$, we let $X_{n,k}$ be the subset of $(\PP^{k-1})^n$ given by
\begin{equation*}
X_{n,k} := \{ (\ell_1, \dots, \ell_n) \,:\, \ell_i \subseteq \CC^k \text{ \em{a line and} } \ell_1 + \cdots + \ell_n = \CC^k \}.
\end{equation*}
\end{defn}

It is not hard to see that $X_{n,k}$ is a Zariski open subvariety of $(\PP^{k-1})^n$ and, as such,
is a smooth complex manifold. 
We will see that 
 $X_{n,k}$ has cohomology presentation $H^{\bullet}(X_{n,k}) \cong R_{n,k}$.
When $k = 1$, we have the one-point space $X_{n,1} = \{* \}$ and $R_{n,1} = \CC$,
so the 
result holds in this case.
 At the other extreme, when $k = n$
we claim that $X_{n,n}$ is homotopy equivalent to the flag variety $\Fl(n)$.  
To see this, we argue as follows.

When $k = n$, for any point $(\ell_1, \dots, \ell_n) \in X_{n,n}$ we have a direct sum 
$\CC^n = \ell_1 \oplus \cdots \oplus \ell_n$.
There is a natural 
projection 
\begin{equation}
\pi: X_{n,n} \twoheadrightarrow \Fl(n)
\end{equation}
which sends the tuple $(\ell_1,  \dots, \ell_n)$ of lines to the flag 
$(V_1, \dots, V_n)$ whose $i^{th}$ 
piece is $V_i = \ell_1 \oplus \cdots \oplus \ell_i$.
 The projection
$X_{n,n} \xrightarrow{ \, \, \pi \, \, } \Fl(n)$ is a fiber bundle with fiber isomorphic to
\begin{equation}
U_n := \{ \text{all lower triangular $n \times n$ complex matrices with 1's on the diagonal} \},
\end{equation}
the lower triangular unipotent subgroup of $GL_n(\CC)$.
Since $U_n$ is affine,
we conclude that $\pi$ is a homotopy equivalence.

The variety $X_{n,k}$ is always smooth, but unlike the flag variety $\Fl(n)$ it is almost never compact.
Indeed, it could not be both.
The Hilbert
series of $R_{n,k}$ is rarely palindromic: for example, 
$\Hilb(R_{3,2};q) = 1 + 3q + 2q^2$.
Therefore, no space with cohomology $R_{n,k}$ can
 be both smooth and 
compact (otherwise it would satisfy Poincar\'e Duality).
In the next section we describe an alternative geometric model of $R_{n,k}$ (and, in fact, a more general
family of rings) which is compact but not smooth.  

The defect of $X_{n,k}$'s failure to be compact is ameliorated from the perspective of $S_n$-actions.
The isomorphism $H^{\bullet}(\Fl(n)) \cong R_n$ is known to hold not only as graded rings,
but also in the category of graded $S_n$-modules. While the $S_n$-structure of $R_n$ is simply variable
permutation, the flag variety $\Fl(n)$ admits no natural action of $S_n$. 
On the other hand, the group $S_n$ acts directly on the variety $X_{n,k}$ by
\begin{equation}
w \cdot (\ell_1, \dots, \ell_n) := (\ell_{w^{-1}(1)}, \dots, \ell_{w^{-1}(n)})
\end{equation}
giving an induced action on cohomology. 
When $k = n$,
the homotopy equivalence $\Fl(n) \simeq X_{n,n}$ induces an action of $S_n$ on $H^{\bullet}(\Fl(n))$.
For general $k$,
this action will be compatible with our presentation
$H^{\bullet}(X_{n,k}) \cong R_{n,k}$.

Our basic strategy for studying $X_{n,k}$ is to make use of {\em compactification}, a general
technique in geometry where one embeds a space of study $X$ into a better-understood compact space $Y$
in such a way that $\overline{X} = Y$. 
If this embedding is sufficiently nice, geometric properties of $Y$ may be transferred to $X$.

The $n$-fold self product $(\PP^{k-1})^n$ may be thought of as the family of all
$n$-tuples of lines $(\ell_1, \dots, \ell_n)$ in $\CC^k$, spanning or otherwise.
The space $(\PP^{k-1})^n$ is compact and has well-understood 
cohomology.
Considering spanning tuples only gives a natural inclusion
\begin{equation}
\iota: X_{n,k} \hookrightarrow (\PP^{k-1})^n
\end{equation}
which (since a generic collection of $n \geq k$ lines will span $\CC^k$)
satisfies  $\overline{X_{n,k}} = (\PP^{k-1})^n$.
We use this compactification to transfer properties of the well-understood product
$(\PP^{k-1})^n$ to $X_{n,k}$.
To do this, we need to show that the inclusion $\iota$ is sufficiently nice. 
This will be made precise in the next subsection using {\em affine pavings}.

\subsection{Affine pavings and Fubini words}
An important method for understanding a space $X$ is to decompose it as a disjoint union
$X = \bigsqcup_i C_i$ of cells $C_i$ which fit together to form a finite CW complex.
Since any finite CW complex is compact and $X_{n,k}$ is not compact, we need a different kind 
of decomposition in our context.

\begin{defn}
\label{affine-paving}
Let $X$ be a complex variety. A filtration of $X$ by Zariski closed subvarieties
\begin{equation*}
\varnothing = Z_0 \subset Z_1 \subset \cdots \subset Z_m = X
\end{equation*}
is an {\em affine paving}
of $X$ if each difference $Z_i - Z_{i-1}$ is isomorphic to a disjoint union
$\bigsqcup_j A_{ij}$ of affine spaces $A_{ij}$.  
The spaces $A_{ij}$ are the {\em cells} of the affine paving.

Given a disjoint union decomposition
$X = \bigsqcup_{i,j} A_{ij}$ of $X$ into affine spaces, we say that 
the $A_{ij}$ {\em induce an affine paving} if they arise as the cells of an affine paving of $X$.
\end{defn}

 As an example, projective space $\PP^{k-1}$ admits an affine paving
\begin{equation}
\label{standard-projective-affine-paving}
\varnothing \subset [ * : 0 :  \cdots : 0] \subset [* : * : \cdots : 0] \subset \cdots \subset 
[* : * : \cdots : *] = \PP^{k-1}
\end{equation}
with $k$ cells, one of each complex dimension $0, 1, \dots, k-1$.
We refer to this as the {\em standard affine paving} of $\PP^{k-1}$.
It is also a CW decomposition of $\PP^{k-1}$.
More generally, the CW decomposition of any flag variety $G/P$ into Schubert cells 
may be regarded as an affine paving.

Varieties with affine pavings enjoy the following two useful properties.
Suppose $X$ is a smooth variety which admits
an affine paving 
$\varnothing = Z_0 \subset Z_1 \subset \cdots \subset Z_m = X$ with 
cells $A_{ij}$.  
Recall that any closed subvariety $Z \subseteq X$ has a corresponding class $[Z]$ in the cohomology 
ring $H^{\bullet}(X)$.
\begin{enumerate}
\item 
The classes $[ \overline{A_{ij}} ]$ of the Zariski closures of these cells form a basis 
of  $H^{\bullet}(X)$.
\item 
If $0 \leq r \leq m$ and $U := X - X_r$, the inclusion map $\iota: U \hookrightarrow X$ induces 
a surjection $\iota^*: H^{\bullet}(X) \twoheadrightarrow H^{\bullet}(U)$ on cohomology.
\end{enumerate}
More explicitly, the map $\iota^*$ sends the classes $[ \overline{A_{ij}} ]$ for which $A_{ij} \cap U = \varnothing$ to zero while mapping
the cells $[ \overline{A_{ij}} ]$ for which $A_{ij} \subset U$ onto a linear basis of $H^{\bullet}(U)$.

Our goal is to prove that $((\PP^{k-1})^n, X_{n,k})$ forms a pair $(X, U)$ as in
(2) above.  
More precisely, we want to find an affine paving 
$\varnothing = Z_0 \subset Z_1 \subset \cdots \subset Z_m = (\PP^{k-1})^n$
for which $X_{n-k} = (\PP^{k-1})^n - X_r$ is a terminal part.

If $X$ and $Y$ are varieties which admit affine pavings with cells $\{ A_{ij} \}$ and $\{ B_{rs} \}$,
the products $A_{ij} \times B_{rs}$ of these cells induce a {\em product paving} of $X \times Y$.
In particular, the standard paving $\eqref{standard-projective-affine-paving}$ of $\PP^{k-1}$
induces a product paving of $(\PP^{k-1})^n$. 
Unfortunately, this paving is not suitable for our purposes:
cells $C$ of this product paving typically satisfy neither $C \subseteq X_{n,k}$ nor $C \cap X_{n,k} = \varnothing$.
For example, if $\mathbf{1} := \mathrm{span} (1, \dots, 1)$ is the line of constant vectors in $\CC^k$,
then $(\mathbf{1}, \dots,  \mathbf{1})$ will always be in the largest cell $C_0$ of the product paving 
and yet $(\mathbf{1}, \dots,  \mathbf{1}) \notin X_{n,k}$ for $1 < k  <n$.
To get around this hurdle, we introduce a nonstandard paving of $(\PP^{k-1})^n$.
In order to describe this paving, we coordinatize as follows.

Write $\Mat_{k \times n}$ for the affine space $\mathbb{A}^{k \times n}$ of $k \times n$ complex matrices 
equipped with an action of $GL_k(\CC) \times T_n$ where $\GL_k(\CC)$ acts on rows and the 
torus $T_n := (\CC^{\times})^n$ acts on columns.
 We introduce nested subsets $\UUU_{n,k} \subseteq \VVV_{n,k}$ of 
$\Mat_{k \times n}$ by
\begin{equation}
\VVV_{n,k} := \{ A \in \Mat_{k \times n} \,:\, A \text{ has no zero columns} \}
\end{equation}
and 
\begin{equation}
\UUU_{n,k} := \{ A \in \Mat_{k \times n} \,:\, A \text{ has no zero columns and is of full rank} \}
\end{equation}
so that $\UUU_{n,n} = GL_n(\CC)$. 
We have the natural identifications
\begin{equation}
(\PP^{k-1})^n = \VVV_{n,k}/T_n \quad \text{and} \quad 
X_{n,k} = \UUU_{n,k}/T_n
\end{equation}
of quotient spaces.
The sets $\VVV_{n,k}$ and $\UUU_{n,k}$
are both closed under the action of the product group $GL_k(\CC) \times T_n$.  
If $U_k \subseteq GL_k(\CC)$ denotes the unipotent subgroup of lower triangular matrices with 1's on the diagonal,
restriction gives an action of $U_k \times T_n$ on these sets.

A variant of row reduction gives convenient
 representatives of the $U_k \times T_n$-orbits
 inside $\VVV_{n,k}$.  These will be certain {\em pattern matrices}
 corresponding to words $w = [w(1), w(2), \dots, w(n)] \in [k]^n$ defined as follows.
A position $1 \leq j \leq n$ in such a word is {\em initial} if  $w(j)$ is the first 
 of its letter in $w$.  For example, in $[2,5,2,5,1]$ the initial positions are $1, 2,$ and $5$.
 
 \begin{defn}
 \label{pattern-matrix-definition}
 Let $w = [w(1), w(2), \dots, w(n)] \in [k]^n$ be a word.  The {\em pattern matrix} 
 $\PM(w)$ is the $k \times n$ matrix with entries in $\{0, 1, * \}$ whose entries are determined 
by the following procedure.
 \begin{enumerate}
 \item Set the entries $\PM(w)_{w(j),j} = 1$ for $j =1, 2, \dots, n$.
 \item If $1 \leq j \leq n$ and $j$ is initial, set $\PM(w)_{i,j} = *$ for all positions $(i,j)$ which are north
 and east of a 1.
 \item If $1 \leq j \leq n$ and $j$ is not initial, set $\PM(w)_{i,j} = *$ for all positions $(i,j)$ which are 
east of an {\bf initial} 1.
 \item Set all other entires of $\PM(w)$ equal to 0.
 \end{enumerate}
 A $k \times n$ matrix $A$ {\em fits the pattern} of $w$ if it can be obtained by replacing the $*$'s in 
 $\PM(w)$ with complex numbers.
 \end{defn}

For example, if $n = 8$, $k = 5$, and $w = [2,4,1,1,4,5,5,4]$ we have
 \begin{equation*}
 \PM[2,4,1,1,4,5,5,4] = 
 \begin{pmatrix}
 0 & 0 & 1 & 1 & 0 & 0 & * & 0\\
 1 & * & 0 & * & * & 0 & * & * \\
 0 & 0 & 0 & 0 & 0 & 0 & 0 & 0\\
 0 & 1 & 0 & * & 1 & 0 & * & 1\\
 0 & 0 & 0 & 0 & 0 & 1 & 1 & 0
 \end{pmatrix}
 \end{equation*}
 
 Given a word $w \in [k]^n$, let $U_k(w) \subseteq U_k$ be the subgroup of lower triangular unipotent
 $k \times k$ matrices $A$ for which $A_{ij} = 0$ whenever $j$ does not appear as a letter of 
 $w$.  In the above example, we have
 \begin{equation*}
 U_5[2,4,1,1,4,5,5,4] = \begin{pmatrix} 1 & 0 & 0 & 0 & 0 \\
 * & 1 & 0 & 0 & 0 \\
 * & * & 1 & 0 & 0 \\
 * & * & 0 & 1 & 0 \\
 * & * & 0 & *& 1 
 \end{pmatrix}
 \end{equation*}
 where the subdiagonal entries of the third column are zero since 3 does not appear in $w$.
 In particular, if every letter in $[k]$ appears in the word $w$, then $U_k(w) = U_k$ is the full lower
 triangular unipotent subgroup.

The following theorem of linear algebra is proven in \cite{PR}.

 \begin{theorem}
 \label{row-reduction-theorem} 
 {\em (Echelon form)}  Let $A \in \VVV_{n,k}$ be a $k \times n$ matrix with no zero columns. The 
matrix $A$ may be written uniquely as
\begin{equation*}
A = u B t
\end{equation*}
$w \in [k]^n$ is a word,
 $u \in U_k(w)$,  $B$ fits the pattern of $w$, and
 $t \in T_n$ is a member of the diagonal torus.
 The matrix $A$ represents a spanning line configuration $(\ell_1, \dots, \ell_n) \in X_{n,k}$ if and only if each of the
 letters $1, 2, \dots, k$ appear in the word $w$.
 \end{theorem}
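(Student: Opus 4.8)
Theorem~\ref{row-reduction-theorem} is a statement of pure linear algebra, and the plan is to prove it by an explicit column-by-column reduction: a Gauss--Jordan-type algorithm in which the only operations permitted are rescaling a column (to be absorbed into $T_n$) and adding a multiple of a row to a strictly lower row (to be absorbed into a lower-unipotent matrix). Running this algorithm on $A \in \VVV_{n,k}$ will simultaneously produce the factorization $A = uBt$, show that $w, t, B, u$ are forced (hence unique), and --- via a rank computation at the end --- yield the spanning criterion. For $S \subseteq [k]$ I write $E[S] := \mathrm{span}\{e_i : i \in S\}$ for the corresponding coordinate subspace of $\CC^k$.

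I would process the columns $j = 1, \dots, n$ from left to right, maintaining a linearly ordered set $(P_{j-1}, \preceq)$ of ``pivot rows'' inside $[k]$, initialized to $P_0 = \varnothing$, whose elements are appended in the order they are created. Assume inductively that columns $1, \dots, j-1$ of the current matrix are in the pattern form of $\PM(w)$ for the prefix word $(w(1), \dots, w(j-1))$, that each reduced column $B_{j'}$ is supported on $P_{j'}$, and that $\mathrm{span}(B_1, \dots, B_{j-1}) = E[P_{j-1}]$. The crucial consequence is that columns $1, \dots, j-1$ vanish in every row outside $P_{j-1}$; hence adding a multiple of a row $a \notin P_{j-1}$ to any lower row, and rescaling column $j$, leave the already-reduced columns untouched, and these are precisely the moves available for reducing column $j$.

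Let $v$ denote the current column $j$ (nonzero, since $A$ has no zero column and we have acted only by invertible maps). Then $j$ is an initial position of $w$ exactly when $\dim\mathrm{span}(A_1, \dots, A_j) > \dim\mathrm{span}(A_1, \dots, A_{j-1})$, equivalently when $v \notin E[P_{j-1}]$; this dichotomy is intrinsic to $A$. In the non-initial case $v \in E[P_{j-1}]$ no available row operation changes $v$ at all, so one is forced to take $w(j)$ to be the $\preceq$-largest element $i_r$ of $P_{j-1}$ with $v_{i_r} \neq 0$, $t_j := v_{i_r}$, $B_j := v/t_j$, and $P_j := P_{j-1}$; one checks that $\mathrm{supp}(B_j) \subseteq \{ i \in P_{j-1} : i \preceq i_r\}$ is exactly the support allowed by the non-initial columns of $\PM(w)$. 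In the initial case $v \notin E[P_{j-1}]$, let $b$ be the smallest row with $b \notin P_{j-1}$ and $v_b \neq 0$; use row $b$ to clear all entries of column $j$ below row $b$ (legal since $b \notin P_{j-1}$, and harmless to earlier columns since they vanish in row $b$), rescale so the pivot becomes $1$, set $w(j) := b$, and append $b$ to obtain $(P_j, \preceq)$. By minimality of $b$ the surviving entries of column $j$ above the pivot lie in rows of $P_{j-1}$, so the column fits the initial columns of $\PM(w)$, and again every choice was dictated by $v$. In either case the three invariants are restored, and the induction proceeds to $j+1$.

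After step $n$, collecting the column rescalings into $t \in T_n$ and all the row operations into a single lower-unipotent matrix whose subdiagonal entries occur only in columns lying in $P_n = \{ w(j) : j \text{ initial}\}$ --- the set of letters of $w$, so this matrix lies in $U_k(w)$ --- and calling its inverse $u$, we obtain $A = uBt$ with $B$ fitting $\PM(w)$; uniqueness is the assertion that at each step the choice was forced. Finally $\mathrm{rank}(A) = \mathrm{rank}(B) = \dim E[P_n] = |P_n|$, which equals $k$ if and only if every element of $[k]$ occurs among the letters of $w$; since the configuration $(\ell_1, \dots, \ell_n)$ represented by $A$ spans $\CC^k$ exactly when $\mathrm{rank}(A) = k$, this is the stated criterion. \emph{The main obstacle} in a full write-up is the uniqueness bookkeeping: verifying that the residual symmetry $U_k(w) \times T_n$ cannot carry $B$ to a second pattern-fitting matrix --- that the pivot $b$, the normalizing index $i_r$, and all the scalars are rigid --- and that the precise combinatorics of $\PM(w)$ in Definition~\ref{pattern-matrix-definition}, especially the $*$ placements in non-initial columns, agrees on the nose with what the reduction outputs.
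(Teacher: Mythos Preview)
Your proposal is correct and follows the same algorithmic approach that the paper illustrates in its worked example (the paper itself defers the formal proof to \cite{PR}): process columns left to right, use downward row operations from a new pivot row to clear below, and absorb the operations into $U_k(w)$ and the column rescalings into $T_n$. Your inductive invariants (that the reduced columns $B_1,\dots,B_{j-1}$ are supported on $P_{j-1}$ and span $E[P_{j-1}]$) are exactly what is needed, and your identification of $w(j)$ in the non-initial case via the $\preceq$-largest nonzero pivot row matches the pattern of Definition~\ref{pattern-matrix-definition}.

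One small remark on the uniqueness bookkeeping you flag at the end: rather than arguing cell-by-cell that $U_k(w)\times T_n$ cannot move $B$ to another pattern matrix, it is cleaner to observe that for each $j$ the set $P_j$ is the Schubert profile of the subspace $\mathrm{span}(A_1,\dots,A_j)\subseteq\CC^k$ (i.e.\ the set of rows $i$ where $\dim(\mathrm{span}(A_1,\dots,A_j)\cap E[\{1,\dots,i\}])$ jumps), which is an invariant of $A$ alone. This pins down the initial positions and their letters, and then your $\preceq$-largest rule determines the non-initial $w(j)$ from $u^{-1}A_j$, where $u$ is already fixed inductively by the earlier initial columns.
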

 
Theorem~\ref{row-reduction-theorem} is best understood by example. Suppose $k = 3, n = 6$, and $A$ is the matrix
\begin{equation*}
A = \begin{pmatrix} 
0 & -1 & 2 & 0 & 0&  -1 \\
2 & 1 & 0 & 0 &3& -1 \\
2 & 3 & -4 & 3 &3& 2 \end{pmatrix}.
\end{equation*}
In order to obtain $B$ from $A$, we process columns from left to right.  The subgroup $U_3 \subseteq GL_3(\CC)$ allows 
for downward row operations yielding
\begin{equation*}
\begin{pmatrix} 
0 & -1 & 2 & 0 & 0 &  -1 \\
2 & 1 & 0 & 0 & 3 & -1 \\
2 & 3 & -4 & 3 & 3 & 2 \end{pmatrix}  \leadsto
\begin{pmatrix}
0 & -1 & 2 & 0 & 0&  -1 \\
2 & 1 & 0 & 0 & 3 &  -1 \\
0 & 2 & -4 & 3 & 0 & 3 \end{pmatrix}   \leadsto
\begin{pmatrix}
0 & -1 & 2 & 0 & 0 & -1 \\
2 & 0 & 2 & 0 & 3 &  -2 \\
0 & 0 & 0  & 3 & 0 & 1 \end{pmatrix}
\end{equation*}
so that $u \in U_3$ is given by
\begin{equation*}
u = \begin{pmatrix}
1 & 0 & 0 \\
1 & 1 & 0 \\
2  & -1 & 1 
\end{pmatrix}.
\end{equation*}
At this stage, we know that $B$ fits the pattern of $w = [2,1,1,3,2,3]$.
The fact that each letter $1, 2, 3$ appears in $w$ corresponds to the fact that the columns of $A$ have the full span $\CC^3$.
Finally, we use the action of $T_6$ on columns to scale the pivot positions to ones, yielding
\begin{equation*}
B = \begin{pmatrix}
0 & 1 & 1 & 0 & 0 & -1 \\
1 & 0 & 1 & 0 & 1 & -2 \\
0 & 0 & 0 & 1 & 0 & 1 
\end{pmatrix}
\end{equation*}
and
\begin{equation*}
t = \mathrm{diag} \left(  2, -1, 2, 3, 3, 1 \right) \in T_6.
\end{equation*}

Theorem~\ref{row-reduction-theorem} gives rise to a disjoint union decomposition of the projective
space product $(\PP_{k-1})^n$.  Given a word $w \in [k]^n$, define 
$\mathring{X}_w \subseteq (\PP^{k-1})^n$ by
\begin{equation*}
\mathring{X}_w := 
U_k(w) \cdot  \{ B  \,:\, \text{$B$ fits the pattern of $w$  } \} \cdot T_n/T_n
\end{equation*}
As sets, we have $(\PP^{k-1})^n = \bigsqcup_{w \in [k]^n} \mathring{X}_w$.  When $k = 2$ and $n = 3$, the eight strata of this decomposition
of $(\PP^1)^3$
look as follows
\begin{equation*}
X_{[2,2,1]} =   
\begin{pmatrix} 1 & 0 \\ * & 1 \end{pmatrix} \cdot \begin{pmatrix} 0 & 0 & 1 \\ 1 & 1 & 0 \end{pmatrix}
\quad
X_{[1,1,2]} = 
\begin{pmatrix} 1 & 0 \\ * & 1 \end{pmatrix} \cdot \begin{pmatrix} 1 & 1 &  *  \\ 0 & 0 & 1 \end{pmatrix}  \quad
X_{[1,2,1]} = 
\begin{pmatrix} 1 & 0 \\ * & 1 \end{pmatrix} \cdot \begin{pmatrix} 1 & * &  1  \\ 0 & 1 & 0 \end{pmatrix}  
\end{equation*}
\begin{equation*}
X_{[2,1,1]} =   
\begin{pmatrix} 1 & 0 \\ * & 1 \end{pmatrix} \cdot \begin{pmatrix} 0 & 1 & 1 \\ 1 & 0 & * \end{pmatrix} \quad
X_{[1,2,2]} = 
\begin{pmatrix} 1 & 0 \\ * & 1 \end{pmatrix} \cdot \begin{pmatrix} 1 & *  &  *   \\ 0 & 1 & 1 \end{pmatrix}  \quad
X_{[2,1,2]} = 
\begin{pmatrix} 1 & 0 \\ * & 1 \end{pmatrix} \cdot \begin{pmatrix} 0 & 1 &  0  \\ 1 & 0 & 1 \end{pmatrix}  
\end{equation*}
\begin{equation*}
X_{[1,1,1]} =   
\begin{pmatrix} 1 & 0 \\ * & 1 \end{pmatrix} \cdot \begin{pmatrix} 1 & 1 & 1 \\ 0 & 0 & 0 \end{pmatrix} \quad
X_{[2,2,2]} = 
\begin{pmatrix} 1 & 0 \\ 0 & 1 \end{pmatrix} \cdot \begin{pmatrix} 0 & 0  &  0   \\ 1 & 1 & 1 \end{pmatrix}.
\end{equation*}
Recall that the  Fubini words $\WWW_{n,k}$ are words $w \in [k]^n$ of length $n$ in which each letter $1, 2, \dots, k$ appears.
The cells $X_{[1,1,1]}$ and $X_{[2,2,2]}$ on the third line are indexed by words $[1,1,1]$ and $[2,2,2]$ which are not Fubini.
Correspondingly, these cells have empty intersection with the subvariety $X_{3,2}$.
Their union $X_{[1,1,1]} \cup X_{[2,2,2]}$ is the diagonal copy $\{ (\ell, \ell, \ell) \,:\, \ell \in \PP^1 \}$
of $\PP^1$ inside $(\PP^1)^3$.  
The remaining six cells indexed by the Fubini words in $\WWW_{3,2}$ form a partition of $X_{3,2} \subset (\PP^1)^3$.

 Theorem~\ref{row-reduction-theorem} is valid over any field.
 Before focusing on the complex geometry of $X_{n,k}$, we give an application over finite fields.
 The {\em dimension} $\dim(w)$ of a Fubini word $w$ is the number of $*$'s in its pattern matrix $\PM(w)$.
 Equivalently, $\dim(w)$ is the complex dimension of $\mathring{X}_w$, less the quantity ${k \choose 2}$ coming from the unitary group $U_k$.
 One can use the $q$-Stirling recursion \eqref{q-stirling-recursion} to verify that
 \begin{equation}
 \sum_{w \in \WWW_{n,k}} q^{\dim(w)} = [k]!_q \cdot \Stir_q(n,k)
 \end{equation}
 so that dimension is a Mahonian statistic on $\WWW_{n,k}$. 
 Theorem~\ref{row-reduction-theorem} has the following enumerative consequence; see \cite{PR}.
  
  \begin{corollary}
The number of $n$-tuples $(\ell_1, \dots, \ell_n)$ of lines in $\FF_q^n$ for which
 $\ell_1 + \cdots + \ell_n = \FF_q^n$ is $q^{{k \choose 2}} \cdot [k]!_q \cdot \Stir_q(n,k)$.
  \end{corollary}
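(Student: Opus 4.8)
The plan is to read off the quantity in question as the number of $\FF_q$-points of the variety $X_{n,k}$ --- that is, the number of $n$-tuples $(\ell_1,\dots,\ell_n)$ of lines in $\FF_q^k$ with $\ell_1+\cdots+\ell_n=\FF_q^k$ --- and then to compute this point count cell by cell using the echelon form of Theorem~\ref{row-reduction-theorem}, which is valid over any field. First I would recall the resulting stratification: Theorem~\ref{row-reduction-theorem} decomposes $(\PP^{k-1})^n$ into the pieces $\mathring{X}_w$ indexed by words $w\in[k]^n$, and the pieces contained in $X_{n,k}$ are exactly those indexed by Fubini words $w\in\WWW_{n,k}$. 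Hence $X_{n,k}(\FF_q)=\bigsqcup_{w\in\WWW_{n,k}}\mathring{X}_w(\FF_q)$, and it suffices to count each $\mathring{X}_w(\FF_q)$.

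Next I would compute $|\mathring{X}_w(\FF_q)|$ for a fixed Fubini word $w$. By the uniqueness clause of Theorem~\ref{row-reduction-theorem}, every $T_n$-orbit of matrices lying over $\mathring{X}_w$ contains a unique matrix of the form $uB$ with $u\in U_k(w)$ and $B$ fitting the pattern of $w$ (the torus factor $t$ being absorbed into the orbit). This gives, over $\FF_q$, a bijection
\begin{equation*}
\mathring{X}_w(\FF_q)\ \longleftrightarrow\ U_k(w)(\FF_q)\times\{\,B\text{ over }\FF_q\text{ fitting }\PM(w)\,\}.
\end{equation*}
Because $w$ is Fubini, every letter of $[k]$ occurs in $w$, so $U_k(w)=U_k$ is the full lower-triangular unipotent group with $|U_k(\FF_q)|=q^{\binom{k}{2}}$; and the matrices fitting $\PM(w)$ are obtained by filling its $\dim(w)$ starred entries with arbitrary elements of $\FF_q$, so there are $q^{\dim(w)}$ of them. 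Therefore $|\mathring{X}_w(\FF_q)|=q^{\binom{k}{2}+\dim(w)}$.

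Summing over Fubini words and pulling out the constant $q^{\binom{k}{2}}$ then yields
\begin{equation*}
|X_{n,k}(\FF_q)|\;=\;q^{\binom{k}{2}}\sum_{w\in\WWW_{n,k}}q^{\dim(w)}\;=\;q^{\binom{k}{2}}\cdot[k]!_q\cdot\Stir_q(n,k),
\end{equation*}
where the last equality is the Mahonian identity $\sum_{w\in\WWW_{n,k}}q^{\dim(w)}=[k]!_q\,\Stir_q(n,k)$ recorded above (itself a short induction on $n$ using the recursion defining $\Stir_q$). I do not expect a serious obstacle: essentially all the content is already packaged in the echelon form theorem. The one point that warrants care is that we are counting line configurations rather than matrices --- but the echelon form is tailored to exactly this, since for each Fubini word the representative pair $(u,B)$ of a $T_n$-orbit is unique --- and the complementary fact that for non-Fubini $w$ the group $U_k(w)$ is a proper subgroup of $U_k$, consistent with those cells contributing nothing to $X_{n,k}$.
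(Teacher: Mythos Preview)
Your proposal is correct and follows essentially the same approach as the paper: apply the echelon form of Theorem~\ref{row-reduction-theorem} over $\FF_q$ to stratify $X_{n,k}(\FF_q)$ by Fubini words, count each cell as $q^{\binom{k}{2}+\dim(w)}$ (using $U_k(w)=U_k$ for Fubini $w$), and invoke the Mahonian identity $\sum_{w\in\WWW_{n,k}}q^{\dim(w)}=[k]!_q\cdot\Stir_q(n,k)$. Your write-up is in fact more explicit than the paper's, which simply records the Mahonian identity and states the corollary as an immediate consequence of the row-reduction theorem.
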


We point out that the $q$-Stirling numbers have arisen in geometry before.
Billey and Cozkun \cite{BC} introduced {\em rank varieties} which are certain closed subvarieties $X(M)$ of the Grassmannian
$\Gr(k,n)$ of $k$-planes in $\CC^n$.  
The generating function $\sum_{M} q^{\dim X(M)}$ for the 
dimensions of these varieties for $n, k$ fixed equals $\Stir_q(n+1,n-k+1)$ \cite[Cor. 4.25]{BC}.
The author is unaware of a direct connection between rank varieties and $X_{n,k}$.

In the classical setting, the Schubert cells $\{ \mathring{X}_w \,:\, w \in S_n \}$ assemble to give a CW-decomposition
of the flag variety $\Fl(n)$.
This cannot be true for the decomposition $X_{n,k} = \bigsqcup_{w \in \WWW_{n,k}} \mathring{X}_w$ since any finite CW complex is compact
but $X_{n,k}$ is not.
The next result says that we have the next best thing: the collection $\{ \mathring{X}_w \,:\, w \in \WWW_{n,k} \}$ form the cells of an affine
paving of $X_{n,k}$.

\begin{theorem}
\label{affine-paving-theorem}
Let $k \leq n$ be positive integers. 
\begin{enumerate}
\item
 For any word $w \in [k]^n$, the subset 
$\mathring{X}_w \subseteq (\PP^{k-1})^n$ is affine with complex dimension equal to the number of 
$*$'s in the pattern matrix $\PM(w)$ plus the dimension of the affine space $U_k(w)$.
\item
 We have set-theoretic disjoint unions
 \begin{equation*}
 (\PP^{k-1})^n = \bigsqcup_{w \in [k]^n} \mathring{X}_w \quad \text{and} \quad
 X_{n,k} = \bigsqcup_{w \in \WWW_{n,k}} \mathring{X}_w
 \end{equation*}
 \item
 The cells $\mathring{X}_w$ induce an affine paving 
 $\varnothing = X_0 \subset X_1 \subset \cdots \subset X_m = (\PP^{k-1})^n$ of 
 $(\PP^{k-1})^n$ for which $X_{n,k} = (\PP^{k-1})^n - X_i$ for some $i$.
 \end{enumerate}
\end{theorem}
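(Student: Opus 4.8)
The plan is to derive all three parts from the Echelon Form Theorem~\ref{row-reduction-theorem}, the only extra ingredient being a closure statement for the strata $\mathring{X}_w$, which is the real content of part (3).

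\emph{Parts (1) and (2).} Fix a word $w \in [k]^n$. By construction of the pattern matrix, $\{B : B \text{ fits the pattern of } w\}$ is an affine space $\mathbb{A}^{N_w}$, where $N_w$ is the number of $*$'s in $\PM(w)$, and $U_k(w)$ is a coordinate subspace of $U_k \cong \mathbb{A}^{\binom{k}{2}}$, hence an affine space. The \emph{uniqueness} clause of Theorem~\ref{row-reduction-theorem} shows that the multiplication-and-projectivize map
\[
\mu_w \colon U_k(w) \times \mathbb{A}^{N_w} \longrightarrow (\PP^{k-1})^n, \qquad (u,B) \longmapsto \big(\CC \cdot (\text{$j$th column of } uB)\big)_{j=1}^n
\]
is a well-defined injection with image $\mathring{X}_w$: the map lands in $(\PP^{k-1})^n$ because $u$ is invertible and each column of $B$ contains a pivot $1$, and a point of $\mathring{X}_w$ has representatives in $\VVV_{n,k}$ forming a single $T_n$-orbit along which the $(u,B)$ part of the echelon decomposition is constant, the $T_n$ factor being exactly what projectivization discards. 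I would then check that $\mu_w$ is a morphism onto the affine chart of $(\PP^{k-1})^n$ on which the $w(j)$th coordinate of the $j$th line never vanishes, with a regular inverse given by the deterministic Gaussian elimination dictated by $w$ (the pivot pattern is fixed by $w$, so there is no branching, and the only divisions are by pivot entries, which are units once each column is normalized). Hence $\mathring{X}_w$ is locally closed in $(\PP^{k-1})^n$ and isomorphic to $\mathbb{A}^{N_w + \dim U_k(w)}$, which is (1). Since every matrix with no zero columns has a \emph{unique} echelon decomposition and $(\PP^{k-1})^n = \VVV_{n,k}/T_n$, the strata $\mathring{X}_w$ partition $(\PP^{k-1})^n$ over $w \in [k]^n$; and the last clause of Theorem~\ref{row-reduction-theorem} says a point lies in $X_{n,k}$ exactly when every letter of $[k]$ occurs in $w$, i.e. when $w \in \WWW_{n,k}$. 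This is (2).

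\emph{Part (3).} I would reduce this to the single closure statement, call it $(\star)$: \emph{for every $w \in [k]^n$ the Zariski closure $\overline{\mathring{X}_w}$ is a union of strata $\mathring{X}_v$.} Granting $(\star)$, define $v \preceq w$ iff $\mathring{X}_v \subseteq \overline{\mathring{X}_w}$; a short argument using that the strata are locally closed (part (1)), irreducible of dimension $N_w + \dim U_k(w)$, and pairwise disjoint (part (2)) shows $\preceq$ is a partial order on $[k]^n$. The crucial point is that $(\PP^{k-1})^n \setminus X_{n,k}$ is Zariski closed (noted before the theorem), so whenever $w$ is not a Fubini word we have $\overline{\mathring{X}_w} \subseteq (\PP^{k-1})^n \setminus X_{n,k} = \bigsqcup_{v \notin \WWW_{n,k}} \mathring{X}_v$; hence the set $D$ of non-Fubini words in $[k]^n$ is a down-set for $\preceq$. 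Choosing a linear extension $w_1, \dots, w_m$ of $\preceq$ in which the elements of $D$ form an initial segment (possible since $D$ is a down-set), put $X_i := \mathring{X}_{w_1} \sqcup \cdots \sqcup \mathring{X}_{w_i}$. Then $(\star)$ together with the linear-extension property forces each $X_i$ to be Zariski closed ($\overline{X_i} = \bigcup_{j \le i} \overline{\mathring{X}_{w_j}}$ is a union of strata $\mathring{X}_v$ with $v \preceq w_j$, hence among the first $i$), each difference $X_i \setminus X_{i-1} = \mathring{X}_{w_i}$ is an affine space by part (1), and taking $i = |D|$ gives $X_i = (\PP^{k-1})^n \setminus X_{n,k}$. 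Thus $X_{n,k} = (\PP^{k-1})^n \setminus X_i$, which is (3).

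\emph{The main obstacle.} The bulk of the work is $(\star)$; everything else is bookkeeping around Theorem~\ref{row-reduction-theorem}. I would prove $(\star)$ by a direct limit analysis: parametrize $\mathring{X}_w$ via $\mu_w$, move along one-parameter families (for instance let one-parameter subgroups of the diagonal torus $T_k$ and of $U_k$ act on a point of $\mathring{X}_w$), compute the limiting line configurations explicitly, and verify that each again fits some pattern matrix $\PM(v)$, thereby describing the induced degeneration of words $w \rightsquigarrow v$. An appealing alternative is to identify the decomposition $(\PP^{k-1})^n = \bigsqcup_w \mathring{X}_w$ with the Bia\l ynicki--Birula decomposition of $(\PP^{k-1})^n$ for a sufficiently generic one-parameter subgroup of $T_k \subseteq GL_k(\CC)$ --- whose fixed points are precisely the coordinate-line tuples indexed by $[k]^n$ --- since such decompositions of a smooth projective variety are automatically filtrable affine pavings; the cost is then matching the BB cells with the $\mathring{X}_w$, itself a nontrivial limit computation. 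Either route requires the careful, somewhat intricate degeneration argument, and this is the step I expect to be the real difficulty; it is carried out in \cite{PR}.
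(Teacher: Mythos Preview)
Your treatment of parts (1) and (2) is correct and matches the paper: both are immediate from the uniqueness clause of the Echelon Form Theorem~\ref{row-reduction-theorem}, and your extra care in writing down the isomorphism $\mu_w$ is fine.

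For part (3) your route is genuinely different from the paper's. You reduce everything to the closure statement $(\star)$ and then order the cells. The paper instead inserts an \emph{intermediate} stratification $(\PP^{k-1})^n = \bigsqcup_\rho \Omega_\rho$ indexed by northwest-rank functions $\rho(i,j) = \mathrm{rk}(\text{top-left } i\times j \text{ submatrix})$. Rank is upper-semicontinuous, so the $\Omega_\rho$ are automatically locally closed and filtrable; each nonempty $\Omega_\rho$ is identified with a product $\mathbb{A}^d \times \PP^{d_1} \times \cdots \times \PP^{d_r}$; and the cells $\mathring{X}_w$ lying inside a fixed $\Omega_\rho$ are precisely the cells of the \emph{standard} product paving of that product. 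Since the spanning condition is read off from the single value $\rho(k,n)$, the complement $(\PP^{k-1})^n \setminus X_{n,k}$ is already a union of $\Omega_\rho$'s. This two-step approach sidesteps $(\star)$ entirely: one never computes closures of individual $\mathring{X}_w$, only filters the $\Omega_\rho$ (easy, by semicontinuity of rank) and then invokes the known paving of $\mathbb{A}^d \times \prod_i \PP^{d_i}$ inside each piece. Your reduction to $(\star)$ is logically sound, but it trades this structural shortcut for a harder cell-by-cell degeneration.

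Your Bia\l{}ynicki--Birula alternative, however, does not work. For the diagonal $T_k$-action on $(\PP^{k-1})^n$ the BB decomposition with respect to \emph{any} one-parameter subgroup is exactly the product paving, which the paper explicitly rejects as unsuitable. Concretely, take $k=2$, $n=2$, $w=[1,1]$: from the paper's recipe $\mathring{X}_{[1,1]} = U_2 \cdot \left(\begin{smallmatrix}1&1\\0&0\end{smallmatrix}\right)\cdot T_2/T_2$ is the diagonal $\{(\ell,\ell)\}$ minus the point $([0{:}1],[0{:}1])$, hence $\cong \mathbb{A}^1$; but the BB cell at the fixed point $(e_1,e_1)$ is the big cell $\mathbb{A}^1\times\mathbb{A}^1$. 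The two decompositions have different cells and cannot be matched. Your primary route --- prove $(\star)$ by explicit one-parameter degenerations --- is in principle viable, but be aware that \cite{PR} (as summarized in the paper) establishes the paving via the rank stratification $\{\Omega_\rho\}$, not via $(\star)$, so the degeneration argument you sketch would be new work rather than a citation.
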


Items (1) and (2) of Theorem~\ref{affine-paving-theorem}  are direct consequences of
Theorem~\ref{row-reduction-theorem}.
Item (3) requires a bit more work; the basic idea is as follows.

For any position $(i,j)$ in a $k \times n$ matrix $A$, we 
define
\begin{equation}
\mathrm{rk}(i,j,A) := \text{rank of the northwest $i \times j$ submatrix of $A$}
\end{equation}
Since these northwest ranks are invariant under column scaling, we have a function
$\mathrm{rk}(i,j,-)$ on $(\PP^{k-1})^n$.
This gives a disjoint union decomposition
\begin{equation}
\label{omega-decomposition}
(\PP^{k-1})^n = \bigsqcup_{\rho} \Omega_{\rho}
\end{equation}
where the disjoint union ranges over all functions $\rho: [k] \times [n] \rightarrow \ZZ_{\geq 0}$
and 
\begin{equation}
\label{omega-rho-definition}
\Omega_{\rho} := \{ A \cdot T_n \,:\, \mathrm{rk}(i,j,A) = \rho(i,j) \text{ for all positions $(i,j)$ } \}
\end{equation}
One shows that the nonempty $\Omega_{\rho}$ decompose as a product 
 $\mathbb{A}^{d} \times \PP^{d_1} \times \cdots \times \PP^{d_r}$ of an affine space 
 with projective spaces of various dimension depending on $\rho$.
 The disjoint union decomposition in Theorem~\ref{affine-paving-theorem} (2) refines
 the decomposition \eqref{omega-decomposition} and the cells $\mathring{X}_w$ 
 contained in a fixed $\Omega_{\rho}$ trace out the standard product paving of its decomposition
  $\mathbb{A}^{d} \times \PP^{d_1} \times \cdots \times \PP^{d_r}$.

\subsection{Chern classes and cohomology presentation}  Our next task is to leverage the 
linear algebra of the last subsection to compute the cohomology ring $H^{\bullet}(X_{n,k})$.
To perform this computation, we need some basic facts about vector bundles.

Let $X$ be a complex algebraic variety. For $r \geq 0$, a {\em complex vector bundle}
 of rank $r$ over $X$ is an algebraic variety $\EEE$ equipped with a surjective morphism
 $\pi: \EEE \twoheadrightarrow X$ such that, for each $x \in X$, the fiber 
 $\EEE_x := \pi^{-1}(x)$ has the structure of a complex vector space.
 The morphism $\EEE \twoheadrightarrow X$ is required to be {\em locally trivial}
 in the sense that, for all $x \in X$, there is a neighborhood $U$ of $x$ 
and an isomorphism $\pi^{-1}(U) \xrightarrow{\, \sim \, } \CC^r \times U$
 making the following diagram commute
  \begin{center}
  \begin{tikzpicture}
  \node at (0,0) (U)  {$U$};
  \node at (1.5,1) (C) {$\CC^r \times U$};
    \node at (-1.5,1) (pi) {$\pi^{-1}(U)$};
    \node at (0,1.15) (sim) {\begin{scriptsize} $\sim$ \end{scriptsize}};
    
    \draw [ ->> ] (C) -- (U);
    \draw [ ->> ] (pi) -- (U);
    \draw [->] (pi) -- (C);
  \end{tikzpicture}
  \end{center}
  where the map $\CC^r \times U \twoheadrightarrow U$ is the canonical projection.
  A vector bundle $\LLL$ of rank $1$ is called a {\em line bundle}.

  Vector space operations on fibers can be used to construct new bundles from old. 
  The {\em direct sum} (or {\em Whitney sum}) of two vector bundles $\EEE$
  and $\FFF$ over $X$ is defined by 
  \begin{equation}
  (\EEE \oplus \FFF)_x := \EEE_x \oplus \FFF_x \quad \quad x \in X
  \end{equation}
  so that $\mathrm{rank}(\EEE \oplus \FFF) = \mathrm{rank}(\EEE) + \mathrm{rank}(\FFF)$.
  The {\em dual} of a vector bundle $\EEE$ over $X$ has fibers
  \begin{equation}
  \EEE^*_x := (\EEE_x)^* =  \mathrm{Hom}(\EEE_x, \CC) \quad \quad x \in X
  \end{equation}
  so that $\mathrm{rank}(\EEE^*) = \mathrm{rank}(\EEE)$.

A vector bundle $\EEE$ over $X$ can give geometric information about $X$.
For $1 \leq i \leq r$,  the {\em Chern class}
\begin{equation}
c_i(\EEE) \in H^{2i}(X)
\end{equation}
lives in the cohomology ring of $X$.
The {\em total Chern class} is the generating function of these individual Chern classes
\begin{equation}
c(\EEE) := 1 + c_1(\EEE) + c_2(\EEE) + \cdots + c_d(\EEE) \in H^{\bullet}(X)
\end{equation}
which is a typically inhomogeneous element of cohomology. In the special
case where $\EEE = \LLL$ is a line bundle, the total Chern class has the 
simple form $c(\LLL) = 1 + c_1(\LLL)$.

For $r \geq 0$, the {\em trivial vector bundle} of rank $r$ over $X$ is $\EEE = \CC^r \times X$ 
equipped with the canonical projection $\CC^r \times X \twoheadrightarrow X$.
By standard abuse, we denote this trivial bundle by $\CC^r$. 
The definition of this bundle has nothing to do with $X$, and thus carries no cohomological data:
we have $c(\CC^r) = 1$.
When $X$ is a moduli space of complex vector spaces, natural choices of vector bundles $\EEE$ 
can give more information.

 \begin{example}
 \label{projective-space-example}
 Let $\PP^{k-1}$ be the variety of lines through the origin in $\CC^k$.  The {\em tautological line bundle}
 $\LLL$ over $\PP^{k-1}$ has fiber $\ell^* = \mathrm{Hom}(\ell, \CC)$
 over a point $\ell \in \PP^{k-1}$.  If $x = c_1(\LLL)$, we have the cohomology presentation
 \begin{equation*}
 H^{\bullet}(\PP^{k-1}) = \CC[x]/\langle x^k \rangle
 \end{equation*}
as a truncated polynomial ring.
 \end{example}
 
 Recall that the {\em K\"unneth Theorem} gives an isomorphism
 \begin{equation}
 \label{kunneth}
 H^{\bullet}(X \times Y) \cong H^{\bullet}(X) \otimes H^{\bullet}(Y)
 \end{equation}
 for any spaces $X$ any $Y$.  Therefore, the $n$-fold self-product $(\PP^{k-1})^n$ has cohomology presentation
 \begin{equation}
 H^{\bullet}(\PP^{k-1})^n = \CC[x_1, \dots, x_n]/\langle x_1^k, \dots, x_n^k \rangle
 \end{equation}
 where $x_i = c_1(\LLL_i)$, the Chern class of the tautological line bundle over the $i^{th}$ copy of 
 $\PP^{k-1}$.
 
 We shall bootstrap Example~\ref{projective-space-example} to present the cohomology of $X_{n,k}$.
 In order to do this, we will need a piece of Chern calculus.  
 Given a variety $X$, a {\em short exact sequence}
 \begin{equation}
 0 \rightarrow \EEE' \rightarrow \EEE \rightarrow \EEE'' \rightarrow 0
 \end{equation}
of vector bundles over  $X$ consists of
 morphisms  $\EEE' \rightarrow \EEE \rightarrow \EEE''$ of varieties commuting with 
the maps to $X$ such that
  for each point $x \in X$, the induced maps  
 $\EEE'_x \rightarrow \EEE_x \rightarrow \EEE''_x $ form a short exact sequence of vector spaces.
The direct sum gives a short exact sequence
 $ 0 \rightarrow \EEE' \rightarrow \EEE' \oplus \EEE'' \rightarrow \EEE'' \rightarrow 0$
  as expected, but more complicated examples are possible.
 
 Given a short exact sequence as above, the
 Chern classes of $\EEE, \EEE',$ and $\EEE''$ are related in the ring $H^{\bullet}(X)$.
 This relationship is most cleanly stated
 in terms of total Chern classes:
 \begin{equation}
 \label{add-across-exact}
 c(\EEE) = c(\EEE') \cdot c(\EEE'')
 \end{equation}
 When $\EEE = \EEE' \oplus \EEE''$, the corresponding relation
\begin{equation}
\label{whitney-sum}
c(\EEE \oplus \FFF) = c(\EEE) \cdot c(\FFF)
\end{equation}
inside $H^{\bullet}(X)$ is called the {\em Whitney sum formula}.
The Whitney sum formula is a key ingredient for presenting the cohomology of $X_{n,k}$.

\begin{theorem}
\label{x-cohomology-presentation}
For $1 \leq i \leq n$, let $\LLL_i$ be the vector bundle over $X_{n,k}$ whose fiber over a point 
$(\ell_1, \dots, \ell_n)$ is the dual space $\ell_i^*$.  
The cohomology ring $H^{\bullet}(X_{n,k})$ has presentation
\begin{equation*}
H^{\bullet}(X_{n,k}) = R_{n,k} =
 \CC[x_1, \dots, x_n]/\langle x_1^k, \dots, x_n^k, e_n, e_{n-1}, \dots, e_{n-k+1} \rangle
\end{equation*}
where $x_i \leftrightarrow c_1(\LLL_i)$.
\end{theorem}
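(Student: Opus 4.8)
The plan is to build a surjective ring map $R_{n,k}\twoheadrightarrow H^{\bullet}(X_{n,k})$ out of Chern classes and then force it to be an isomorphism by a dimension count, using the affine paving of Theorem~\ref{affine-paving-theorem} together with the orbit-harmonics identification of Theorem~\ref{orbit-harmonics-identification}. First I would record that the $i$-th projection $p_i\colon X_{n,k}\to\PP^{k-1}$ exhibits $\LLL_i$ as the pullback of the tautological line bundle on the $i$-th factor, so that under the inclusion $\iota\colon X_{n,k}\hookrightarrow(\PP^{k-1})^n$ the class $x_i:=c_1(\LLL_i)$ is exactly $\iota^{*}$ of the corresponding generator of $H^{\bullet}((\PP^{k-1})^n)=\CC[x_1,\dots,x_n]/\langle x_1^k,\dots,x_n^k\rangle$. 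In particular $x_i^{k}=0$ in $H^{\bullet}(X_{n,k})$. Thus the only relations left to verify are $e_n=e_{n-1}=\cdots=e_{n-k+1}=0$; granting these, the algebra map $\CC[x_1,\dots,x_n]\to H^{\bullet}(X_{n,k})$ sending $x_i\mapsto c_1(\LLL_i)$ kills $I_{n,k}$ and descends to a graded ring map $R_{n,k}\to H^{\bullet}(X_{n,k})$.

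\textbf{The elementary symmetric relations via Chern calculus.} Over $X_{n,k}$ the dual bundle $\LLL_i^{*}$ has fiber $\ell_i$ over $(\ell_1,\dots,\ell_n)$, and the spanning condition $\ell_1+\cdots+\ell_n=\CC^k$ says precisely that the bundle map $\bigoplus_{i=1}^{n}\LLL_i^{*}\to\CC^{k}$ (sum of the inclusions $\ell_i\hookrightarrow\CC^{k}$, with $\CC^{k}$ the trivial rank-$k$ bundle) is surjective at every point. Hence its kernel $\mathcal{K}$ is locally free of rank $n-k$, and we obtain a short exact sequence of vector bundles
$$0\longrightarrow \mathcal{K}\longrightarrow \bigoplus_{i=1}^{n}\LLL_i^{*}\longrightarrow \CC^{k}\longrightarrow 0 .$$
Applying additivity of total Chern classes across this sequence, equation~\eqref{add-across-exact}, and using $c(\CC^{k})=1$, we get $c(\mathcal{K})=\prod_{i=1}^{n}c(\LLL_i^{*})=\prod_{i=1}^{n}(1-x_i)=\sum_{j\ge 0}(-1)^{j}e_j(x_1,\dots,x_n)$. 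Since $\mathcal{K}$ has rank $n-k$, its Chern classes vanish in degrees exceeding $n-k$, so $e_j(x_1,\dots,x_n)=0$ in $H^{\bullet}(X_{n,k})$ for all $j>n-k$; that is, $e_n=e_{n-1}=\cdots=e_{n-k+1}=0$, which is exactly what was needed.

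\textbf{Surjectivity and the dimension count.} By Theorem~\ref{affine-paving-theorem} the variety $X_{n,k}$ is the open complement $(\PP^{k-1})^n-X_i$ of a closed member of an affine paving of $(\PP^{k-1})^n$; consequently $\iota^{*}\colon H^{\bullet}((\PP^{k-1})^n)\twoheadrightarrow H^{\bullet}(X_{n,k})$ is surjective, and since the source is generated as a ring by the $x_i$ and $\iota^{*}x_i=c_1(\LLL_i)$, the map $R_{n,k}\to H^{\bullet}(X_{n,k})$ constructed above is surjective. On the other hand, Theorem~\ref{affine-paving-theorem} also gives an affine paving of $X_{n,k}$ whose cells $\mathring{X}_w$ are indexed by $\WWW_{n,k}$, and the classes of the cell closures form a basis of $H^{\bullet}(X_{n,k})$, so $\dim_{\CC}H^{\bullet}(X_{n,k})=|\WWW_{n,k}|=k!\cdot\Stir(n,k)$. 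By Theorem~\ref{orbit-harmonics-identification} this equals $\dim_{\CC}R_{n,k}$. A surjection between finite-dimensional vector spaces of equal dimension is an isomorphism, so $R_{n,k}\xrightarrow{\ \sim\ }H^{\bullet}(X_{n,k})$. The whole argument is visibly $S_n$-equivariant—the action on $X_{n,k}$ permutes the $\ell_i$ and hence the bundles $\LLL_i$ and classes $x_i$—so the isomorphism also holds as graded $S_n$-modules.

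\textbf{Expected main obstacle.} Almost all of the heavy lifting is already packaged in the cited results: the affine paving supplies both surjectivity of $\iota^{*}$ and the Betti numbers $|\WWW_{n,k}|$, and orbit harmonics supplies $\dim R_{n,k}$. The genuinely new input is the short exact sequence of bundles, and the one point that requires care is recognizing the spanning condition as pointwise surjectivity of $\bigoplus_i\LLL_i^{*}\to\CC^{k}$, which is what guarantees the kernel $\mathcal{K}$ is an honest rank-$(n-k)$ vector bundle so that the degree bound on $c(\mathcal{K})$ applies.
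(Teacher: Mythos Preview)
Your proof is correct and follows essentially the same approach as the paper: build the map $R_{n,k}\to H^{\bullet}(X_{n,k})$ from Chern classes, derive the $e_j$ relations from a rank-$(n-k)$ bundle arising from the spanning condition via the Whitney sum formula, and conclude by the surjectivity of $\iota^*$ and the dimension count from Theorems~\ref{affine-paving-theorem} and~\ref{orbit-harmonics-identification}. The only cosmetic difference is that the paper dualizes the surjection $\bigoplus_i\ell_i\twoheadrightarrow\CC^k$ to an injection $(\CC^k)^*\hookrightarrow\bigoplus_i\LLL_i$ and uses the rank-$(n-k)$ cokernel $\EEE$, whereas you work directly with the rank-$(n-k)$ kernel $\mathcal{K}$ of the original surjection; these yield the same relations.
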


\begin{proof}
Let us make the abbreviation $y_i := c_1(\LLL_i) \in H^{\bullet}(X_{n,k})$.
By the definition of $X_{n,k}$, for any point $(\ell_1, \dots, \ell_n) \in X_{n,k}$ we have a linear surjection
\begin{equation}
\ell_1 \oplus \cdots \oplus \ell_n \twoheadrightarrow \CC^k
\end{equation} 
given by 
$(v_1, \dots, v_n) \mapsto v_1 + \cdots + v_n$.  Dualizing this surjection yields an injection of vector bundles
$(\CC^k)^* \hookrightarrow \LLL_1 \oplus \cdots \oplus \LLL_n$ over $X_{n,k}$ which may be completed
to a short exact sequence
\begin{equation}
0 \rightarrow (\CC^k)^* \rightarrow \LLL_1 \oplus \cdots \oplus \LLL_n \rightarrow \EEE \rightarrow 0
\end{equation}
where $(\CC^k)^*$ is a trivial rank $k$ bundle and $\EEE$ has rank $n-k$.  Applying 
Equations~\eqref{add-across-exact} and \eqref{whitney-sum}, we have
\begin{equation}
(1 + y_1) \cdots (1 + y_n)  = c((\CC^k)^*) \cdot c(\EEE) = 1 \cdot c(\EEE) = c(\EEE)
\end{equation}
where $c((\CC^k)^*) = 1$ since $(\CC^k)^*$ is a trivial bundle. As a member of the graded
ring $H^{\bullet}(X)$,
the element $c(\EEE)$ has degree $\leq 2(n-k)$ so that 
\begin{equation}
e_d(y_1, \dots, y_n) = 0  \text{ whenever $d > n-k$}
\end{equation}
since each $y_i = c_1(\LLL_i)$ has degree 2.
Furthermore, since $\LLL_i$ is the pullback to $X_{n,k}$ of the tautological line bundle over the $i^{th}$
factor in $(\PP^{k-1})^n$, we have 
\begin{equation}
y_i^k = 0 \text{ for $i = 1, 2, \dots, n$. }
\end{equation}
Consequently, we have a well-defined homomorphism 
\begin{equation}
\varphi: R_{n,k} \rightarrow H^{\bullet}(X_{n,k})
\end{equation}
given by $\varphi: x_i \mapsto y_i$.

We claim that $\varphi$ is surjective. 
If $\iota: X_{n,k} \hookrightarrow (\PP^{k-1})^n$ is the inclusion, Theorem~\ref{affine-paving-theorem}
implies that the induced map 
\begin{equation}
\iota^*: H^{\bullet}(\PP^{k-1})^n \twoheadrightarrow H^{\bullet}(X_{n,k})
\end{equation}
on cohomology is a surjection. 
Example~\ref{projective-space-example} and the subsequent discussion show that the 
cohomology classes of the tautological line bundles over the $n$ factors 
in $(\PP^{k-1})^n$ generate the source ring 
$H^{\bullet}(\PP^{k-1})^n$.
Consequently, the Chern classes $y_1, \dots, y_n$
generate the target ring $H^{\bullet}(X_{n,k})$ and $\varphi$ is a surjection.

Finally, we show that $\varphi$ is an isomorphism.  Theorem~\ref{affine-paving-theorem} implies that 
$\{ \mathring{X}_w \,:\, w \in \WWW_{n,k} \}$ induces an affine paving of $X_{n,k}$, so that 
$H^{\bullet}(X_{n,k})$ has vector space 
dimension  $|\WWW_{n,k}|$, the number of length $n$ Fubini words with maximum 
letter $k$.
Theorem~\ref{orbit-harmonics-identification} shows that the domain $R_{n,k}$ of $\varphi$
has the same dimension, forcing the epimorphism $\varphi$ to be an isomorphism.
\end{proof}

We can consider
Theorem~\ref{x-cohomology-presentation} for varying $n$ and $k$.
The cohomology rings $H^{\bullet}(\Fl(n))$ of the flag varieties $\Fl(n)$ exhibit favorable stability properties as $n \rightarrow \infty$
(see \cite{Fulton}). Analogously,
we can consider $\mathrm{ind}$-varieties formed by the glueing $X_{n,k}$ along either of the sequences $(n,k) \leadsto (n+1, k)$
or $(n,k) \leadsto (n+1,k+1)$ which preserve the condition $n \geq k$ necessary for $X_{n,k}$ to be nonempty.
We refer the reader to \cite{PR, PRR} for details.

\begin{remark}
\label{coefficient-remark}
{\em (From $\CC$ to $\ZZ$.)}
Theorem~\ref{x-cohomology-presentation}, as well as the other geometric results presented in this chapter, hold over $\ZZ$ as well as $\CC$.
We outline the basic argument used in this setting
for deducing an integral cohomology presentation from a complex cohomology presentation.

Let $I_{n,k}^{\ZZ} \subseteq \ZZ[x_1, \dots, x_n]$ be the ideal in $\ZZ[x_1, \dots, x_n]$ with the same generators as $I_{n,k}$:
\begin{equation}
I_{n,k}^{\ZZ} := \langle e_n, e_{n-1}, \dots, e_{n-k+1}, x_1^k, x_2^k, \dots , x_n^k \rangle \subseteq \ZZ[x_1, \dots, x_n]
\end{equation}
and denote the corresponding quotient ring by 
\begin{equation}
R_{n,k}^{\ZZ} := \ZZ[x_1, \dots, x_n]/I_{n,k}^{\ZZ}.
\end{equation}
We claim that the family $\AAA_{n,k}$ of substaircase monomials in $x_1, \dots, x_n$ descends to a  $\ZZ$-basis of $R_{n,k}^{\ZZ}$.
Linear independence over $\ZZ$ follows from linear independence over $\CC$.  To show spanning, one can use the Gr\"obner-theoretic
part of Theorem~\ref{orbit-harmonics-identification} together with explicit witness relations \cite[Lem. 3.4]{HRS}
which realize the relevant Demazure characters $\kappa_{\rev(\gamma(S))} \in I_{n,k}^{\ZZ}$, or an explicit straightening algorithm
appearing in Tanisaki \cite{Tanisaki} and
Garisa-Procesi \cite{GP} which was adapted by Griffin
\cite{Griffin} to this setting.
In particular, the ring $R_{n,k}^{\ZZ}$ is a free $\ZZ$-module of rank $k! \cdot \Stir(n,k)$.

The affine paving result Theorem~\ref{affine-paving-theorem} implies that the integral cohomology ring $H^{\bullet}(X_{n,k};\ZZ)$ is also
 a free $\ZZ$-module of rank $k! \cdot \Stir(n,k)$, with basis given by the classes $[X_w]$ of the closures $X_w$
 of cells $\mathring{X}_w$ indexed by Fubini words $w \in \WWW_{n,k}$.  Theorem~\ref{affine-paving-theorem} also guarantees that the 
  map $\iota^*: H^{\bullet}(X_{n,k};\ZZ) \twoheadrightarrow H^{\bullet}((\PP^{k-1})^n;\ZZ)$ on integral cohomology induced by 
  $\iota: X_{n,k} \hookrightarrow (\PP^{k-1})^n$ is a surjection.  The same Whitney sum reasoning as in the proof of Theorem~\ref{x-cohomology-presentation}
  yields a surjective map 
  \begin{equation}
  \varphi: R^{\ZZ}_{n,k} \twoheadrightarrow H^{\bullet}(X_{n,k};\ZZ)
  \end{equation}
  of $\ZZ$-algebras.  To see that $\varphi$ is an isomorphism, we use the result that any epimorphism between two $\ZZ$-modules of the same finite rank
  is automatically an isomorphism (a fact easily proven using Smith Normal Form).
\end{remark}

\subsection{Fubini word Schubert polynomials}
Given a Fubini word $w$, let $X_w \subseteq X_{n,k}$ be the closure of $\mathring{X}_w$.
Theorem~\ref{affine-paving-theorem} guarantees that the classes $[X_w]$ of these cells forms a 
basis of $H^{\bullet}(X_{n,k})$ as $w$ varies over $\WWW_{n,k}$.  
In this subsection we describe explicit polynomial representatives
$\symm_w$ for the $[X_w]$ under the 
cohomology presentation $H^{\bullet}(X_{n,k}) = R_{n,k}$.
The $\symm_w$ will specialize to the usual Schubert polynomials when $k = n$ and $w \in S_n$ is a permutation.

In order to define our generalized Schubert polynomials, we will need some combinatorial definitions 
related to words.
A word $v = [v(1), \dots, v(n)]$ is {\em convex} if it contains no subword of the form $\cdots i \cdots j \cdots i \cdots$
where $i \neq j$.
The {\em convexification}  of an arbitrary word $w = [w(1), \dots,  w(n)]$ is the unique
convex word $\conv(w) = [v(1), \dots, v(n)]$ 
obtained by rearranging the letters of $w$ in which the initial letters appear
in the same order.
The {\em sorting permutation} $\sort(w) \in S_n$ of $w$ is the unique Bruhat minimal $u \in S_n$ for which
$[w(u(1)), \dots , w(u(n))] = \conv(w)$.
The {\em standardization} of a convex Fubini word $v = [v(1), \dots, v(n)] \in \WWW_{n,k}$ 
is the permutation 
$\st(v) \in S_n$ whose one-line notation is obtained by replacing the non-initial letters in $v$,
from left to right, with $k+1, k+2, \dots, n$.

\begin{defn}
Let $w \in \WWW_{n,k}$ be a Fubini word. Define a polynomial $\symm_w \in \ZZ[x_1, \dots, x_n]$ by 
\begin{equation*}
\symm_w := \sort(w)^{-1} \cdot \symm_{\st(\conv(w))}
\end{equation*}
where $\symm_{\st(\conv(w)}$ is the usual Schubert polynomial attached to the permutation
$\st(\conv(w)) \in S_n$.
\end{defn}

As an example, consider the Fubini word $w = [2,1,2,1,3,3,2,3] \in \WWW_{8,3}$.
The convexification of $w$ is $\conv(w)= [2,2,2,1,1,3,3,3]$  and  
is $\sort(w) = [1, 3 , 7 , 2  , 4 , 5 , 6 , 8 ] \in S_8$ is the Bruhat-minimal permutation which sorts $\conv(w)$
into $w$.
The standardization of $\conv(w)$ is  the permutation
$\st(\conv(w)) = [2,4 , 5, 1, 6 , 3, 7 ,  8] \in S_8$
whose Schubert polynomial is
\begin{equation*}
\symm_{\st(\conv(w))} = x_1^2 x_2^2 x_3^2 + x_1^2 x_2^2 x_3 x_4 + x_1^2 x_2 x_3^2 x_4 + 
x_1 x_2^2 x_3^2 x_4 + x_1^2 x_2^2 x_3 x_5 + x_1^2 x_2 x_3^2 x_5 + x_1 x_2^2 x_3^2 x_5.
\end{equation*}
In order to obtain $\symm_w$ itself, we apply the permutation 
$\sort(w)^{-1} = [ 4 , 1 , 6 , 2 , 3 , 5 , 7 , 8 ]$ 
to the subscripts in the above polynomial yielding
\begin{equation*}
\symm_w = 
x_4^2 x_1^2 x_6^2 + x_4^2 x_1^2 x_6 x_2 + 
x_4^2 x_1 x_6^2 x_2 +
x_4 x_1^2 x_6^2 x_2 +
x_4^2 x_1^2 x_6 x_3 +
x_4^2 x_1 x_6^2 x_3 +
x_4 x_1^2 x_6^2 x_3.
\end{equation*}

The usual permutation-indexed Schubert polynomials represent closures of Schubert cells
in the flag variety $\Fl_n$.
This extends to the setting of $X_{n,k}$; the following is a result of Pawlowski and Rhoades \cite{PR}.

\begin{theorem}
\label{schubert-represent}
For any $w \in \WWW_{n,k}$, the cohomology class $[X_w]$ is represented by $\symm_w$
under the cohomology presentation $H^{\bullet}(X_{n,k}) = R_{n,k}$
in Theorem~\ref{x-cohomology-presentation}. In particular, the set 
$\{ \symm_w \,:\, w \in \WWW_{n,k} \}$ descends to a basis of $R_{n,k}$.
\end{theorem}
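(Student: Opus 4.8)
\emph{Proof sketch.} The plan is to reduce to convex Fubini words using the $S_n$-action, to treat a convex word $v$ by realizing the cell closure $X_v$ as a degeneracy locus and invoking Fulton's matrix-Schubert class formula, and finally to read off the basis statement from the affine paving of Theorem~\ref{affine-paving-theorem}. Throughout we use the presentation $H^{\bullet}(X_{n,k}) = R_{n,k}$ of Theorem~\ref{x-cohomology-presentation}, under which the geometric $S_n$-action on $X_{n,k}$ becomes the permutation action on $\CC[x_1,\dots,x_n]$.

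\emph{Step 1 (reduction to convex words).} The first task is to check that the permutation $\sort(w)$ carries the cell $\mathring X_{\conv(w)}$ isomorphically onto $\mathring X_w$. Permuting the lines of a configuration by $\sort(w)$ permutes the columns of its echelon representative from Theorem~\ref{row-reduction-theorem}; since convexification preserves the left-to-right order of the initial letters and $\sort(w)$ is Bruhat-minimal among permutations sorting $\conv(w)$ to $w$, this column permutation carries the pattern matrix $\PM(\conv(w))$ to $\PM(w)$. Passing to Zariski closures inside $X_{n,k}$ gives $\sort(w)\cdot X_{\conv(w)} = X_w$, so $[X_w]$ is obtained from $[X_{\conv(w)}]$ by the corresponding variable permutation on $R_{n,k}$. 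The definition $\symm_w := \sort(w)^{-1}\cdot \symm_{\st(\conv(w))}$ is engineered precisely so that, granting the convex case $[X_{\conv(w)}] = \symm_{\st(\conv(w))}$, we recover $[X_w] = \symm_w$; matching the pushforward/pullback conventions here is routine. Thus it suffices to prove that $[X_v] = \symm_{\st(v)}$ for convex $v \in \WWW_{n,k}$ (for which $\symm_v = \symm_{\st(v)}$).

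\emph{Step 2 (the convex case).} Let $v \in \WWW_{n,k}$ be convex, so that every letter occurs contiguously in $v$ and the partial column spans of a configuration in $\mathring X_v$ have locally constant dimension. Refining the northwest-rank stratification $(\PP^{k-1})^n = \bigsqcup_\rho \Omega_\rho$ from the proof of Theorem~\ref{affine-paving-theorem}, one shows that the closure of $\mathring X_v$ inside the compactification $(\PP^{k-1})^n$ is cut out by the northwest-rank inequalities $\mathrm{rk}(i,j,A) \leq \rho_v(i,j)$, where $\rho_v$ is the rank function of $\PM(v)$; since $X_{n,k} \hookrightarrow (\PP^{k-1})^n$ is an open immersion and $\mathring X_v$ is dense in both closures, pulling back recovers $[X_v]$. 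Such a rank locus is a matrix-Schubert-type degeneracy locus for the tautological map relating the flag of coordinate subbundles to $\LLL_1 \oplus \cdots \oplus \LLL_n$, so Fulton's formula identifies its class in $H^{\bullet}((\PP^{k-1})^n) = \CC[x_1,\dots,x_n]/\langle x_1^k,\dots,x_n^k\rangle$, hence in the further quotient $R_{n,k}$, with the classical Schubert polynomial $\symm_u$ of the permutation $u \in S_n$ whose rank function is $\rho_v$. The standardization operation is defined so that $u = \st(v)$; therefore $[X_v] = \symm_{\st(v)}$ in $R_{n,k}$, and combined with Step 1 this yields $[X_w] = \symm_w$ for every $w \in \WWW_{n,k}$.

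\emph{Step 3 (basis) and the main obstacle.} By Theorem~\ref{affine-paving-theorem} the classes $\{[X_w] : w \in \WWW_{n,k}\}$ form a $\CC$-basis of $H^{\bullet}(X_{n,k}) = R_{n,k}$; since these coincide with the $\symm_w$, the set $\{\symm_w : w \in \WWW_{n,k}\}$ descends to a basis of $R_{n,k}$. The delicate part is Step 2: correctly identifying $X_v$ with its rank-degeneracy locus, extracting the permutation $\st(v)$ from the rank function $\rho_v$, and — crucially — verifying that the degeneracy-locus class remains compatible, modulo the relations $e_n, e_{n-1}, \dots, e_{n-k+1}$ that separate $R_{n,k}$ from the cohomology of the ambient product of projective spaces, with the classical Schubert polynomial $\symm_{\st(v)}$. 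This Chern-class bookkeeping is the technical heart of \cite{PR}.
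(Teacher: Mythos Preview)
Your proposal is correct and follows essentially the same route as the paper's own argument: reduce to convex Fubini words via the $S_n$-action (using that $\sort(w)\cdot \mathring{X}_{\conv(w)} = \mathring{X}_w$), identify the closure of a convex cell inside $(\PP^{k-1})^n$ with a northwest-rank degeneracy locus $\Omega_\rho$, and invoke Fulton's matrix-Schubert formula to extract the Schubert polynomial $\symm_{\st(v)}$. The only refinement the paper adds is the observation that $\sort(w)$ factors as a product of simple transpositions each interchanging at most one initial letter, which makes the claim $\sort(w)\cdot \mathring{X}_{\conv(w)} = \mathring{X}_w$ transparent one step at a time; your column-permutation argument reaches the same conclusion.
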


Theorem~\ref{schubert-represent} is proven using Fulton's theory of degeneracy loci and matrix 
Schubert varieties.
When $w$ is convex, one observes that the closure of $\mathring{X}_w$ inside $(\PP^{k-1})^n$
is an appropriate rank variety $\Omega_{\rho}$
as in \eqref{omega-rho-definition}.
Degeneracy locus theory applies to show that the representative of $[X_w]$ is as claimed.
If $w$ is not convex, consider the Bruhat-minimal permutation $\sort(w) \in S_n$ which carries $w$ to 
its convexification $\conv(w)$. 
The permutation $\sort(w)$ has a reduced factorization $\sort(w) = s_{i_1} \cdots s_{i_r}$ where 
each $s_{i_j} = (i_j, i_j+1)$ interchanges at most one initial letter.  It follows from the definitions that 
$\sort(w) \cdot \mathring{X}_{\conv(w)} = \mathring{X}_w$ as sets, and Theorem~\ref{x-cohomology-presentation}
allows us to reduce to the convex setting.

Unlike in the case of the flag variety $\Fl(n)$, the Schubert basis of Theorem~\ref{schubert-represent}
does not have positive structure constants in general.
For example, if $n = 4$ and $k = 3$ we have
\begin{equation*}
\symm_{[1,1,2,3]} \cdot \symm_{[1,2,3,2]} = - \symm_{[1,1,3,2]} + 2 \symm_{[2,2,1,3]}
\end{equation*}
inside $R_{4,3}$.

\subsection{Variations}
The ideas used to present the cohomology of $X_{n,k}$ apply to more general moduli 
spaces of spanning configurations. 
In each case, the cohomology of the space in question is generated by the Chern classes of a natural
family of
vector bundles, subject only to relations coming from the Whitney Sum Formula.
We give three examples of this paradigm.

Our first example, due to Pawlowski and Rhoades \cite{PR},
 gives a geometric interpretation of the rings $R_{n,k,s}$ of Definition~\ref{rnks-definition}.
Fix three integers $k \leq s \leq n$ and let $\pi: \CC^s \twoheadrightarrow \CC^k$
be the projection which forgets the last $s-k$ coordinates.  Let $X_{n,k,s}$ be the open
subvariety of $(\PP^{s-1})^n$ given by
\begin{equation}
X_{n,k,s} := \left\{ (\ell_1, \dots, \ell_n) \,:\, 
\begin{array}{c} \text{each $\ell_i$ is a line in $\CC^s$ and} \\ \pi(\ell_1 + \cdots + \ell_n) = \CC^k  \end{array} 
  \right\}.
\end{equation}
This space reduces to $X_{n,k}$ when $k = s$.  Let $\LLL_i$ be the line bundle over $X_{n,k,s}$
whose fiber over $(\ell_1, \dots, \ell_n)$ is the dual space $\ell_i^*$.

Just as was the case for $X_{n,k}$,
we have an inclusion $\iota: X_{n,k,s} \hookrightarrow (\PP^{s-1})^n$ and a nonstandard affine paving of 
$(\PP^{s-1})^n$ in which $(\PP^{s-1})^n - X_{n,k,s}$ forms an initial part.
Thus, the cohomology map $\iota^{*}: H^{\bullet}((\PP^{s-1})^n) \rightarrow H^{\bullet}(X_{n,k,s})$ 
 is a surjection and the ring $H^{\bullet}(X_{n,k,s})$ is generated by the
 Chern classes $x_i := c_1(\LLL_i) \in H^2(X_{n,k,s})$
 for $i = 1, 2, \dots, n$.
 The number of cells contained in $X_{n,k,s}$ is the number of $s$-block ordered 
set partitions $(B_1 \mid \cdots \mid B_s)$ of $[n]$ in which the last
$s-k$ blocks $B_{k+1}, B_{k+2}, \dots, B_s$ are allowed
to be empty; these objects index a  basis of $H^{\bullet}(X_{n,k,s})$.

The Chern class relations $x_i^s = 0$ already hold over the projective space product $(\PP^{s-1})^n$.  
For any point $(\ell_1, \dots, \ell_n) \in X_{n,k,s}$ we have a surjection
\begin{equation}
\ell_1 \oplus \cdots \oplus \ell_n \twoheadrightarrow \CC^k
\end{equation}
given by $(v_1, \dots, v_n) \mapsto \pi(v_1 + \cdots + v_n)$. Dualizing gives a short exact sequence
\begin{equation}
0 \rightarrow (\CC^k)^* \rightarrow \LLL_1 \oplus \cdots \oplus \LLL_n \rightarrow \EEE \rightarrow 0
\end{equation}
where $(\CC^k)^*$ is trivial of rank $k$ and $\EEE$ has rank $n-k$. The Whitney Sum Formula implies
that the terms of degree $> n-k$ in the product $(1 + x_1) \cdots (1 + x_n)$ vanish.
Consequently, we have a surjection
\begin{equation}
\label{nks-surjection}
R_{n,k,s} \twoheadrightarrow
H^{\bullet}(X_{n,k,s}).
\end{equation}
As before, we see that the domain $R_{n,k,s}$ and codomain $H^{\bullet}(X_{n,k,s})$ are vector spaces
of the same dimension, so we have an isomorphism
$H^{\bullet}(X_{n,k,s}) \cong R_{n,k,s}$.

For our second example, due to Rhoades and Wilson \cite{RWLine},
we consider spanning configurations of lines in which 
an initial set
is required to be linearly independent.
For parameters $r \leq k \leq n$, we consider the open subvariety
\begin{equation}
X_{n,k}^{(r)} := \left\{
(\ell_1, \dots, \ell_n) \,:\, 
\begin{array}{c}
\text{$\ell_i$ a line in $\CC^k$, } 
\text{$\ell_1 + \cdots + \ell_n = \CC^k$, and} \\
\text{the sum $\ell_1 \oplus \cdots \oplus \ell_r$ is direct}
\end{array}
\right\}
\end{equation}
of $(\PP^{k-1})^n$.
When $r = 1$ this space reduces to $X_{n,k}$.

We define the line bundles $\LLL_i = \ell_i^*$  over $X_{n,k}^{(r)}$ and their Chern classes 
$x_i = c_1(\LLL_i) \in H^{\bullet}(X_{n,k}^{(r)})$ as before.
The same paving used to study the embedding $X_{n,k} \subseteq (\PP^{k-1})^n$
shows that we have a cohomology surjection
$\iota^*: H^{\bullet}((\PP^{k-1})^n) \twoheadrightarrow H^{\bullet}(X_{n,k}^{(r)})$ 
induced by  
$\iota: X_{n,k}^{(r)} \hookrightarrow (\PP^{k-1})^n$ so that the $x_i$ generate 
$H^{\bullet}(X_{n,k}^{(r)})$.

We examine relations among the Chern classes $x_i \in H^{\bullet}(X_{n,k}^{(r)})$.
The identities $x_i^k = 0$ already hold over $(\PP^{k-1})^n$.
The same exact sequence of vector bundles used to study $H^{\bullet}(X_{n,k})$ applies 
to show $e_d(x_1, \dots, x_n) = 0$ for $d > n-k$.  
Since the first 
$r$ members of a point $(\ell_1, \dots, \ell_n) \in X_{n,k}^{(r)}$ have full $r$-dimensional span,
vector addition gives an injection
\begin{equation}
\ell_1 \oplus \cdots \oplus \ell_r \hookrightarrow \CC^k
\end{equation}
which leads to a short exact sequence
\begin{equation}
0 \rightarrow \FFF \rightarrow (\CC^k)^* \rightarrow \LLL_1 \oplus \cdots \oplus \LLL_r \rightarrow 0
\end{equation}
of vector bundles over $X_{n,k}^{(r)}$ where the middle bundle is trivial and $\FFF$ has rank $k-r$.
The Whitney Sum Formula implies
\begin{equation}
c(\FFF) \cdot (1 + x_1) \cdots (1 + x_r) = c ((\CC^k)^*) = 1,
\end{equation}
so that terms of degree $> k-r$ in the expression
\begin{equation}
\frac{1}{(1+x_1) \cdots (1+x_r)} = \sum_{d \geq 0} (-1)^d \cdot h_d(x_1, \dots, x_r)
\end{equation}
vanish in $H^{\bullet}(X_{n,k}^{(r)})$.
In summary, if we introduce the ideal 
\begin{equation}
I_{n,k}^{(r)} := 
\langle x_1^k, x_2^k, \dots, x_n^k \rangle + 
\langle e_d(x_1, x_2, \dots, x_n) \,:\, d > n-k \rangle + 
\langle h_d(x_1, x_2, \dots, x_r) \,:\, d > k-r \rangle
\end{equation}
in $\CC[x_1, \dots, x_n]$ generated by these relations, we have a surjection
\begin{equation}
\label{nkr-surjection}
\varphi: \CC[x_1, \dots, x_n]/I_{n,k}^{(r)} \twoheadrightarrow H^{\bullet}(X_{n,k}^{(r)}).
\end{equation}
Orbit harmonics and paving theory show that the domain and codomain of 
\eqref{nkr-surjection} are vector spaces of the same dimension,
so  \eqref{nkr-surjection} is an isomorphism.

For our final example (see \cite{RhoadesSpan})
we consider configurations of subspaces of arbitrary dimension $d \geq 1$.
Recall that $\Gr(d,k)$ denotes the Grassmannian of $d$-dimensional subspaces 
$W \subseteq \CC^k$.
Extending this notation, if $\alpha = (\alpha_1, \dots, \alpha_n)$ is any list of $n$
 integers $1 \leq \alpha_i \leq k$, we write
$\Gr(\alpha,k)$ for the product
\begin{equation}
\Gr(\alpha,k) := \Gr(\alpha_1,k) \times \cdots \times \Gr(\alpha_n,k) = 
\{ (W_1, \dots, W_n) \,:\, W_i \subseteq \CC^k, \, \, \dim W_i = \alpha_i \}.
\end{equation}
The spanning subspace configurations
\begin{equation}
X_{\alpha,k} := \{ (W_1, \dots, W_n) \,:\, W_1 + \cdots + W_n = \CC^k \}
\end{equation}
in $\Gr(\alpha,k)$ form an open subvariety which is nonempty if and only if 
$\alpha_1 + \cdots + \alpha_n \geq k$.

The varieties $X_{\alpha,k}$ generalize classical objects in type A Schubert calculus.
\begin{itemize}
\item
If $\alpha_i = k$ for any $i$, the space $X_{\alpha,k} = \Gr(\alpha,k)$ is a 
product of Grassmannians.  
\item
If $\alpha_1 + \cdots + \alpha_n = k$, 
the assignment $(W_1, W_2, \dots, W_n) \mapsto (W_1, W_1 + W_2, \dots, W_1 + \cdots + W_n)$
gives a homotopy equivalence $X_{\alpha,k} \simeq \Fl(\alpha)$
between $X_{\alpha,k}$ and the partial flag variety $\Fl(\alpha)$ indexed by $\alpha$.
At the level of algebraic groups, this equivalence is the canonical surjection 
$GL_k(\CC)/L_{\alpha} \rightarrow GL_k(\CC)/P_{\alpha}$ where $L_{\alpha}$ and $P_{\alpha}$
are the Levi and parabolic subgroups of $GL_k(\CC)$ indexed by $\alpha$, respectively.
\end{itemize}
When $\alpha = (1, \dots, 1)$ consists of $n$ copies of $1$, the space $X_{\alpha,k}$ specializes
to $X_{n,k}$.

For $\alpha = (\alpha_1, \dots, \alpha_n)$ arbitrary, we let $\EEE_i$ be the vector bundle over $X_{\alpha,k}$
whose fiber over $(W_1, \dots, W_n)$ is the dual space $W_i^*$.  The bundle $\EEE_i$ has rank $\alpha_i$.
The total Chern class $c(\EEE_i)$ factors 
\begin{equation}
c(\EEE_i) = 1 + c_1(\EEE_i) + c_2(\EEE_i) + \cdots + c_{\alpha_i}(\EEE_i) = 
(1 + x_{i,1})(1 + x_{i,2}) \cdots (1 + x_{i,\alpha_i})
\end{equation}
inside  $H^{\bullet}(X'_{\alpha,k})$ where $X'_{\alpha,k}$ is a flag extension of $X_{\alpha,k}$ and  
 $x_{i,1}, x_{i,2}, \dots, x_{i,\alpha_i} \in H^2(X'_{\alpha,k})$ are the {\em Chern roots} of $\EEE_i$.
Writing $N := \alpha_1 + \cdots + \alpha_n$ for the sum of the $\alpha_i$, we refer to 
the total list of these Chern roots as 
\begin{equation}
(x_1, \dots, x_N) := (x_{1,1}, \dots, x_{1,\alpha_1}, x_{2,1}, \dots, x_{2,\alpha_2}, \dots, 
x_{n,1}, \dots, x_{n,\alpha_n}).
\end{equation}
Any polynomial in the variables $(x_1, \dots, x_N)$ which is invariant under 
$S_{\alpha} = S_{\alpha_1} \times \cdots \times S_{\alpha_n}$ is an element of the subring
$H^{\bullet}(X_{\alpha,k}) \subseteq H^{\bullet}(X'_{\alpha,k})$.

Write $\iota: X_{\alpha,k} \hookrightarrow \Gr(\alpha,k)$ for the  embedding of $X_{\alpha,k}$
into the Grassmannian product.
The Schubert cell decomposition of the Grassmannian induces a product paving of $\Gr(\alpha,k)$,
but this is not adapted to the map $\iota$. However,
the echelon form of Theorem~\ref{row-reduction-theorem} generalizes to yield
 a nonstandard affine paving
of $\varnothing = Z_0 \subset Z_1 \subset \cdots \subset Z_m = \Gr(\alpha,k)$ 
where $\Gr(\alpha,k) - X_{\alpha,k} = Z_j$ for some $j$. Consequently, we have a surjection
\begin{equation}
\iota^*: H^{\bullet}(\Gr(\alpha,k)) \twoheadrightarrow H^{\bullet}(X_{\alpha,k}).
\end{equation}
The vector bunde $\EEE_i$ is the pullback to $X_{n,k}$ of the tautological bundle over the $i^{th}$ factor
$\Gr(\alpha_i,k)$ of $\Gr(\alpha,k)$.  Since $\iota^*$ is a surjection, we conclude that 
$H^{\bullet}(X_{\alpha,k})$ is generated by the Chern classes the bundles
$\EEE_1, \dots, \EEE_n$.
Our next task is to write down relations satisfied by the Chern roots $x_1, \dots, x_N$ of these bundles.

Given any configuration $(W_1, \dots, W_n) \in X_{\alpha,k}$,
 for each $i$ we have a short exact sequence of vector spaces
\begin{equation}
0 \rightarrow W_i \rightarrow \CC^k \rightarrow \CC^k/W_i \rightarrow 0 
\end{equation}
which dualizes to a short exact sequence
\begin{equation}
0 \rightarrow \FFF_i \rightarrow (\CC^k)^* \rightarrow \EEE_i \rightarrow 0
\end{equation} 
of vector bundles where $\FFF_i$ has rank $k - \mathrm{rank}(\EEE_i) = k - \alpha_i$.
Since $c(\FFF_i) c(\EEE_i) = c ((\CC^k)^*) = 1$, the rational expression
\begin{equation*}
c(\EEE_i)^{-1} = \frac{1}{(1 + x_{i,1})(1 + x_{i,2}) \cdots (1 + x_{i,\alpha_i})}
\end{equation*}
is a polynomial of degree $\leq k - \alpha_i$ so that $h_d(x_{i,1}, x_{i,2}, \dots, x_{i,\alpha_i}) = 0$ inside  
$H^{\bullet}(X_{\alpha,k})$ whenever $d > k - \alpha_i$. These relations also hold over the Grassmannian
$\Gr(\alpha,k)$.  Over $X_{\alpha,k}$ the spanning condition yields an additional short exact sequence
\begin{equation}
0 \rightarrow (\CC^k)^* \rightarrow \EEE_1 \oplus \cdots \oplus \EEE_n \rightarrow \FFF \rightarrow 0
\end{equation}
where $\FFF$ has rank $N - k$.  Reasoning as in the case of $X_{n,k}$, we have 
$e_d(x_1, \dots, x_N) = 0$ inside $H^{\bullet}(X_{\alpha,k})$ whenever $d > N-k$.
These relations suffice to present the cohomology of $X_{\alpha,k}$.
An orbit harmonics argument leads to a presentation
\begin{equation}
\label{subspace-cohomology-presentation}
H^{\bullet}(X_{\alpha,k}) = (\CC[x_1, \dots, x_N]/I_{\alpha,k})^{S_{\alpha}}
\end{equation}
as a $S_{\alpha}$-invariant subring
where $I_{\alpha,k} \subseteq \CC[x_1, \dots, x_N]$ is the ideal 
\begin{equation}
I_{\alpha,k} :=   
\langle e_d(x_1, x_2, \dots, x_N) \,:\, d > N-k \rangle +
\sum_{i = 1}^n \langle h_{d_i}(x_{i,1}, x_{i,2}, \dots, x_{i,\alpha_i}) \,:\, d_i > k - \alpha_i \rangle.
\end{equation}
When $\alpha = (1, \dots, 1)$ is a sequence of $n$ copies of $1$, the presentation
\eqref{subspace-cohomology-presentation} is the assertion
$H^{\bullet}(X_{n,k}) = R_{n,k}$ of Theorem~\ref{x-cohomology-presentation}.

\section{Generalized Springer fibers}
\label{Generalized}

The Garsia-Procesi-Tanisaki rings are quotients $R_{\lambda}$
of $\CC[x_1, \dots, x_n]$ which depend on a partition $\lambda \vdash n$.
Like the rings $R_{n,k}$, they are instances of orbit harmonics quotients.
Geometrically, the $R_{\lambda}$ present \cite{HS} the cohomology of the Springer fibers 
$\BBB_{\lambda}$.

In his Ph.D. thesis, Griffin \cite{Griffin} introduced a common generalization $R_{n,\lambda,s}$
of the two families of rings $R_{n,k}$ and $R_{\lambda}$.
Many algebraic properties of $R_{n,k}$ have been generalized to $R_{n,\lambda,s}$.
Griffin, Levinson, and Woo \cite{GLW} defined a  {\em $\Delta$-Springer fiber}
$Y_{n,\lambda,s}$ whose cohomology is presented by
$R_{n,\lambda,s}$.  Unlike the variety $X_{n,k}$ of spanning line configurations (which is 
smooth but not compact), the varieties $Y_{n,\lambda,s}$ are compact but not smooth.
As such, they give a different geometric model for the symmetric function $\Delta'_{e_{k-1}} e_n$
at $t = 0$.
We recall the classical story of Springer fibers, and then discuss their recent generalization.

\subsection{Springer fibers}
Given a partition $\lambda \vdash n$, let $\BBB_{\lambda}$ be the 
associated {\em Springer fiber}  \cite{Springer} defined as follows.
There is a natural action of $GL_n(\CC)$ on the flag variety $\Fl(n)$ given by
\begin{equation}
X \cdot (V_0 \subset V_1 \subset \cdots \subset V_{n-1} \subset V_n) =
(X V_0 \subset X V_1 \subset \cdots \subset X V_{n-1} \subset X V_n).
\end{equation}
For any invertible matrix $X$, the set $\{ V_{\bullet} \in \Fl(n) \,:\, X \cdot V_{\bullet} = V_{\bullet} \}$
of flags fixed by $X$ forms a closed subvariety of $\Fl(n)$.  If $X = U_{\lambda}$ is a unipotent 
$n \times n$ matrix of Jordan type $\lambda$, the Springer fiber
\begin{equation}
\BBB_{\lambda} := \{ V_{\bullet} \in \Fl(n) \,:\, U_\lambda \cdot V_{\bullet} = V_{\bullet} \}
\end{equation}
is the associated fixed space.  For example, when $\lambda = (1^n)$ we have $U_{\lambda} = I$
so that $\BBB_{(1^n)} = \Fl(n)$.  The only fixed point of $U_{(n)}$ 
is the standard flag
 $0 \subset \langle e_1 \rangle \subset \langle e_1, e_2 \rangle 
 \subset \cdots \subset \langle e_1, \dots, e_n \rangle$
so that $\BBB_{(n)} = \{ * \}$ is a single point.
The space $\BBB_{\lambda}$ is compact, but is typically singular.

The cohomology ring of $\BBB_{\lambda}$ has a nice presentation. 
Given a subset $S \subseteq [n]$, let $e_d(S)$ be the elementary symmetric polynomial of degree $d$
in the restricted variable set $\{ x_i \,:\, i \in S \}$.  In particular, we have $e_d(S) = 0$ whenever $d > |S|$.
We write 
$\lambda$ and 
\begin{equation}
m_{i,n}(\lambda) := \lambda_n' + \lambda_{n-1}' + \cdots  +\lambda'_{n-i+1}
\end{equation}
for the number of boxes in the Young diagram of $\lambda$ which do not lie in the first $n-i$ columns.
The {\em Tanisaki ideal}  is
\begin{equation}
I_{\lambda} := \langle e_d(S) \,:\,  S \subseteq [n], \, \, d > n - m_{|S|,n}(\lambda) \rangle \subseteq 
\CC[x_1, \dots, x_n].
\end{equation}
We write $R_{\lambda} := \CC[x_1, \dots, x_n]/I_{\lambda}$ for the corresponding quotient ring.
Hotta and Springer \cite{HS} presented the cohomology of $\BBB_{\lambda}$ as
\begin{equation}
\label{springer-cohomology-presentation}
H^{\bullet}(\BBB_{\lambda}) = R_{\lambda}.
\end{equation}

As with the flag manifold $\Fl(n)$, the symmetric group does not act on the variety 
$\BBB_{\lambda}$ in any natural way.
Despite this, the {\em Springer construction} gives a symmetric group action on the cohomology
of $\BBB_{\lambda}$.  
With respect to the presentation 
\eqref{springer-cohomology-presentation},
 this $S_n$-action is subscript permutation.
 Hotta and Springer \cite{HS} 
 proved that the top-degree piece $H^{\mathrm{top}}(\BBB_{\lambda})$ carries the irreducible representation
$S^{\lambda}$ of $S_n$, giving a geometric model for irreducible $S_n$-modules.

Garsia-Procesi and Tanisaki \cite{GP, Tanisaki} studied
 the $S_n$-action on $R_{\lambda}$ from a combinatorial point of view.
For distinct parameters $\alpha_1, \alpha_2, \dots  \in \CC$ and a partition $\lambda \vdash n$,
let $Z_{\lambda} \subset \CC^n$ be the locus of points $(x_1, \dots, x_n)$ in which the number $\alpha_i$
appears with multiplicity $\lambda_i$ among the coordinates.  For example, we have
$Z_{(2,1)} = \{ (\alpha_1, \alpha_1, \alpha_2), (\alpha_1, \alpha_2, \alpha_1), (\alpha_2, \alpha_1, \alpha_1) \}$.
It is evident that $Z_{\lambda}$ is closed under the coordinate permuting action of $S_n$ and carries a copy
of the parabolic coset 
representation $\CC[S_n/S_{\lambda}]$.
The graded ideal $\gr \, \II(Z_{\lambda}) \subseteq \CC[x_1, \dots, x_n]$ 
arising from orbit harmonics was computed \cite{GP, Tanisaki} to be
\begin{equation}
\gr \, \II(Z_{\lambda}) = I_{\lambda}
\end{equation}
so that  as an ungraded $S_n$-module, the ring $R_{\lambda}$ is an orbit harmonics quotient:
\begin{equation}
R_{\lambda} = 
\CC[x_1, \dots, x_n]/\gr \, \II(Z_{\lambda}) \cong \CC[S_n/S_{\lambda}].
\end{equation}
At the level of Frobenius images, we have
\begin{equation}
\label{ungraded-frobenius-image}
\Frob(R_{\lambda}) = h_{\lambda}.
\end{equation}
Garsia and Procei \cite{GP} refined \eqref{ungraded-frobenius-image} by showing
\begin{equation}
\grFrob(R_{\lambda};q) = \widetilde{H}_{\lambda}(\xx;q)
\end{equation}
so that the graded character of $R_{\lambda}$ is a Hall-Littlewood polynomial.

\subsection{The rings $R_{n,\lambda,s}$}
We are ready to give Griffin's common generalization of the rings $R_{n,k}$
and $R_{\lambda}$.
These rings depend on a partition $\lambda$ 
with at most $n$ boxes and an integer $s \geq \ell(\lambda)$.

\begin{defn}
\label{griffin-rings}
{\em (Griffin \cite{Griffin})}
Let $n$ and $s$ be positive integers and let $\lambda = (\lambda_1 \geq \cdots \geq \lambda_s \geq 0)$
be a partition with $\leq s$ positive parts
with $| \lambda | = \lambda_1 + \cdots + \lambda_s \leq n$.  Let 
$I_{n,\lambda,s} \subseteq \CC[x_1, \dots, x_n]$ be the ideal
\begin{equation}
I_{n,\lambda,s} = \langle e_d(S) \,:\, S \subseteq [n], \, \, d > n - m_{|S|,n}(\lambda) \rangle +
\langle x_1^s, x_2^s, \dots, x_n^s \rangle
\end{equation}
where $m_{i,n}(\lambda) = \lambda_n' + \lambda_{n-1}' + \cdots + \lambda_{n-i+1}'$.
Write 
\begin{equation}
R_{n,\lambda,s} := \CC[x_1, \dots, x_n]/I_{n,\lambda,s}
\end{equation}
for the corresponding quotient ring.
\end{defn}

The rings $R_{n,\lambda,s}$ have the following interesting specializations.
\begin{itemize}
\item When $\lambda = (1^n)$ is a single column of length $n$, for any $s \geq n$ the ring 
$R_{n,(1^n),s}$ is the classical coinvariant ring $R_n$.
\item When $\lambda = (1^k)$ is a single column of length $k \leq n$ and $s = k$, we have
$R_{n,(1^k),k} = R_{n,k}$.  More generally, for $s \geq k$, we have $R_{n,(1^k),s} = R_{n,k,s}$.
\item When $|\lambda| = n$, we recover the Tanisaki quotient
$R_{n,\lambda,s} = R_{\lambda}$ for any $s \geq \ell(\lambda)$.
\end{itemize}

The algebra of $R_{n,\lambda,s}$ is described by the combinatorics of ordered set partition-like
objects.  More precisely, given $(n,\lambda,s)$ as in Definition~\ref{griffin-rings}, let 
$\OP_{n,\lambda,s}$ be the family of length $s$ sequences 
$(B_1 \mid \cdots \mid B_s)$ of pairwise disjoint subsets of $[n]$ such that 
$B_1 \sqcup \cdots \sqcup B_s = [n]$ and the set $B_i$ has at least $\lambda_i$ elements.
Thus
$\OP_{n,(1^k),k} = \OP_{n,k}$ are the ordered set partitions we met before.
Observe that a block $B_i$ of $(B_1 \mid \cdots \mid B_s) \in \OP_{n,\lambda,s}$ is allowed
to be empty whenever $\lambda_i = 0$.

The symmetric group $S_n$ acts on the set $\OP_{n,\lambda,s}$ of generalized ordered set partitions.
Griffin extended the orbit harmonics techniques of \cite{GP} and \cite{HRS} to prove the 
following result.

\begin{theorem}
\label{triple-ungraded-structure}
We have the isomorphism $R_{n,\lambda,s} \cong \CC[\OP_{n,\lambda,s}]$ of ungraded 
$S_n$-modules.
\end{theorem}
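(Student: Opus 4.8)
The plan is to run the orbit harmonics machinery of the earlier sections with the point locus $Z_{n,\lambda,s}$ in place of $Z_{n,k}$. First I would define the appropriate locus: fix distinct scalars $\alpha_1, \dots, \alpha_s \in \CC$ and set
\begin{equation*}
Z_{n,\lambda,s} := \{ (x_1, \dots, x_n) \in \CC^n \,:\, \alpha_i \text{ appears among the coordinates with multiplicity} \geq \lambda_i \text{ for all } i, \text{ and all coordinates lie in } \{\alpha_1, \dots, \alpha_s\} \}.
\end{equation*}
The correspondence sending a point to the sequence $(B_1 \mid \cdots \mid B_s)$, where $B_i$ records the positions of $\alpha_i$, is an $S_n$-equivariant bijection $Z_{n,\lambda,s} \xrightarrow{\sim} \OP_{n,\lambda,s}$, so $\CC[Z_{n,\lambda,s}] \cong_{S_n} \CC[\OP_{n,\lambda,s}]$. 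By the general orbit harmonics isomorphism \eqref{g-module-iso}, it then suffices to prove $\gr \, \II(Z_{n,\lambda,s}) = I_{n,\lambda,s}$ as ideals in $\CC[x_1, \dots, x_n]$.

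The containment $I_{n,\lambda,s} \subseteq \gr \, \II(Z_{n,\lambda,s})$ is the easy half and proceeds by exhibiting vanishing polynomials whose top-degree parts are the generators. For the variable powers, each coordinate of a point in $Z_{n,\lambda,s}$ satisfies $(x_i - \alpha_1) \cdots (x_i - \alpha_s) = 0$, so $x_i^s \in \gr \, \II(Z_{n,\lambda,s})$. For the Tanisaki-type generators $e_d(S)$ with $d > n - m_{|S|,n}(\lambda)$: for any subset $S \subseteq [n]$ with $|S| = j$, the restriction to the coordinates indexed by $S$ of a point in $Z_{n,\lambda,s}$ uses at most $j$ of the values $\alpha_1, \dots, \alpha_s$, and a multiplicity count shows the product $\prod_{i \in S}(t - x_i)$, divided by the forced factors, forces the elementary symmetric polynomials $e_d(S)$ to vanish for $d$ in the stated range — this is exactly the classical Garsia--Procesi argument (generating-function bookkeeping as in \cite{GP, Tanisaki}), now carried out over the restricted value set, combined with the generating-function trick used in the proof of Theorem~\ref{orbit-harmonics-identification}. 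Taking highest-degree components gives $e_d(S) \in \gr \, \II(Z_{n,\lambda,s})$.

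For the reverse containment one counts dimensions: since $I_{n,\lambda,s} \subseteq \gr \, \II(Z_{n,\lambda,s})$ we have a surjection $\CC[x_1, \dots, x_n]/\gr \, \II(Z_{n,\lambda,s}) \twoheadrightarrow R_{n,\lambda,s}$, and $\dim \CC[x_1, \dots, x_n]/\gr \, \II(Z_{n,\lambda,s}) = |Z_{n,\lambda,s}| = |\OP_{n,\lambda,s}|$; so it is enough to show $\dim R_{n,\lambda,s} \leq |\OP_{n,\lambda,s}|$, which forces both the projection to be an isomorphism and the ideal containment to be an equality. The cleanest route to the upper bound is to produce a spanning set of $R_{n,\lambda,s}$ of the right size by Gr\"obner-theoretic means, generalizing Proposition~\ref{a-spans}: one needs Demazure-character (or straightened-monomial) witnesses showing that suitable leading monomials lie in $I_{n,\lambda,s}$, cutting the standard monomial basis down to a family of ``$(n,\lambda,s)$-substaircase'' monomials in bijection with $\OP_{n,\lambda,s}$. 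This is the main obstacle: the Tanisaki generators $e_d(S)$ are far more numerous and intertwined than the $e_{n-i+1}$'s appearing in $I_{n,k}$, so identifying their $\neglex$ leading terms and checking that the resulting monomial complement has cardinality exactly $|\OP_{n,\lambda,s}|$ is a genuine combinatorial computation — it is precisely here that one invokes Griffin's straightening algorithm from \cite{Griffin}, which furnishes the explicit witness relations and the monomial basis. Granting that input, the dimension count closes and the theorem follows. (Alternatively, one may bypass the Gr\"obner bookkeeping by citing Griffin's independently established Hilbert series / basis results for $R_{n,\lambda,s}$ to get the bound $\dim R_{n,\lambda,s} \leq |\OP_{n,\lambda,s}|$ directly.)
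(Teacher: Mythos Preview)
Your proposal is correct and follows the same approach the paper indicates: the paper does not prove Theorem~\ref{triple-ungraded-structure} in detail but attributes it to Griffin \cite{Griffin}, describing the argument as an extension of the orbit harmonics techniques of \cite{GP} and \cite{HRS}---precisely the locus $Z_{n,\lambda,s}$ you define, the easy containment $I_{n,\lambda,s} \subseteq \gr\,\II(Z_{n,\lambda,s})$, and the dimension bound via Griffin's monomial basis/straightening algorithm. Your identification of the Gr\"obner step as the genuine obstacle, resolved by \cite{Griffin}, matches exactly what the paper says.
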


Griffin refined \cite{GriffinDelta} 
Theorem~\ref{triple-ungraded-structure} by giving
a
 positive expansion of $\grFrob(R_{n,\lambda,s};q)$
into the basis $\widetilde{H}_{\mu}(\xx;q)$ of Hall-Littlewood polynomials, giving another positive instance of
Question~\ref{orbit-positivity-conjecture}.
Finding the graded $S_n$-structure of $R_{n,\lambda,s}$ uses substantially different
(and more involved) methods than those used for $R_{n,k}$.

\subsection{The $\Delta$-Springer fibers}
Like the quotient rings $R_{\lambda}$, the rings $R_{n,\lambda,s}$ also present cohomology rings of varieties
$Y_{n,\lambda,s}$.  When $\lambda \vdash n$ and $s = \ell(\lambda)$, these are precisely the Springer fibers
$\BBB_{\lambda}$.
In general, the variety $Y_{n,\lambda,s}$ consists of certain `initial' partial flags 
$(V_0 \subset V_1 \subset \cdots \subset V_n)$ where $\dim V_i = i$ sitting inside a 
high-dimensional vector space 
$\CC^K$.

Let $\lambda \vdash k$ be a partition with at most $s$ parts.  We  form a larger partition
\begin{equation*}
\tilde{\lambda} := (n-k+\lambda_1, n-k+ \lambda_2, \dots, n-k+\lambda_s)
\end{equation*}
by adding $n-k$ to every part of 
$\lambda = (\lambda_1, \dots, \lambda_s)$, including any trailing zeros.
The partition $\tilde{\lambda}$ is a partition of $K := s(n-k) + k$.
We let $X_{\tilde{\lambda}} \in M_K(\CC)$ be a nilpotent endomorphism of $\CC^K$
of Jordan type $\tilde{\lambda}$.
Griffin, Levinson, and Woo used $X_{\tilde{\lambda}}$ to 
define the following delta-extension of the Springer fiber.

\begin{defn}
{\em (Griffin-Levinson-Woo \cite{GLW})}
For a positive integer $n$ and  a partition $\lambda$ of $k \leq n$ having $\leq s$ parts,
the {\em $\Delta$-Springer fiber} is the variety
$Y_{n,\lambda,s}$ given by
\begin{equation*}
Y_{n,\lambda,s} := 
\{ V_{\bullet} = (V_0 \subset V_1 \subset \cdots \subset V_n) \,:\, V_i \subseteq \CC^K, \,\, 
\dim V_i = i,  \, \,  X_{\tilde{\lambda}} V_i \subseteq V_i, \, \, \text{{\em and}} \, \,  \, X^{n-k}_{\tilde{\lambda}} \CC^K \subseteq V_n \}.
\end{equation*}
\end{defn}

The $\Delta$-Springer fiber
 $Y_{n,\lambda,s}$ is a closed subvariety of the  partial flag variety 
$\Fl(1^n,K-n)$.
As such, the variety $Y_{n,\lambda,s}$ is compact, but rarely smooth.
Intuitively,
the condition $X^{n-k}_{\tilde{\lambda}} \CC^K \subseteq V_n$ on the largest subspace
in flags $V_{\bullet} \in \Fl(1^n,K-n)$ means that the space $Y_{n,\lambda,s}$
is constructed out of Springer fibers.
This is made precise in \cite{GLW} where it is shown that $Y_{n,\lambda,s}$ can be filtered with Springer fibers 
crossed with affine spaces.

\begin{theorem}
\label{y-cohomology-presentation}
{\em (Griffin-Levinson-Woo \cite{GLW})}
We have the presentation $H^{\bullet}(Y_{n,\lambda,s}) = R_{n,\lambda,s}$.
In this presentation,
the variable $x_i$ represents the Chern class $c_1(\LLL_i)$ of the line 
bundle $\LLL_i$ whose fiber over $V_{\bullet} = (V_0 \subset \cdots \subset V_n)$ 
is the dual space $(V_i/V_{i-1})^*$.
\end{theorem}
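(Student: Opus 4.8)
The plan is to imitate the proof of Theorem~\ref{x-cohomology-presentation}, and of the Hotta--Springer presentation $H^{\bullet}(\BBB_{\lambda}) = R_{\lambda}$~\cite{HS}, which is the case $k = n$, $s = \ell(\lambda)$. Namely, I would build a surjection of graded rings
\[
\varphi \colon R_{n,\lambda,s} \twoheadrightarrow H^{\bullet}(Y_{n,\lambda,s}), \qquad x_i \longmapsto c_1(\LLL_i),
\]
and then show that source and target have equal finite $\CC$-dimension, forcing $\varphi$ to be an isomorphism. By Theorem~\ref{triple-ungraded-structure}, $\dim_{\CC} R_{n,\lambda,s} = |\OP_{n,\lambda,s}|$, so two things are needed: an affine paving of $Y_{n,\lambda,s}$ with cells indexed by $\OP_{n,\lambda,s}$, and the well-definedness and surjectivity of $\varphi$.

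For the paving, I would generalize the classical affine pavings of Springer fibers (Spaltenstein, Shimomura, Fresse) together with the echelon-form argument of Theorem~\ref{row-reduction-theorem}: intersect $Y_{n,\lambda,s}$ with the Schubert cells of the ambient partial flag variety $\Fl(1^n,K-n)$ and, via a row reduction adapted to the nilpotent $X_{\tilde\lambda}$ and the condition $X_{\tilde\lambda}^{n-k}\CC^K \subseteq V_n$, identify each nonempty piece as an affine space, organized into a filtration by closed subvarieties and indexed by $\OP_{n,\lambda,s}$. This gives $\dim_{\CC} H^{\bullet}(Y_{n,\lambda,s}) = |\OP_{n,\lambda,s}|$, matching $\dim_{\CC} R_{n,\lambda,s}$. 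It also gives, as for a classical Springer fiber, that the restriction map $H^{\bullet}(\Fl(1^n,K-n)) \twoheadrightarrow H^{\bullet}(Y_{n,\lambda,s})$ is surjective; since $H^{\bullet}(\Fl(1^n,K-n))$ is generated by $c_1(\LLL_1), \dots, c_1(\LLL_n)$ --- the Chern classes of the tail bundle $\CC^K/V_n$ being polynomials in these via the Whitney relation $c(V_1/V_0)\cdots c(V_n/V_{n-1})\,c(\CC^K/V_n) = 1$ --- we conclude that $c_1(\LLL_1), \dots, c_1(\LLL_n)$ generate $H^{\bullet}(Y_{n,\lambda,s})$.

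The crux is to verify that these generators satisfy the defining relations of $I_{n,\lambda,s}$, namely the variable powers $x_i^s = 0$ and the Tanisaki-type relations $e_d(S) = 0$ for $S \subseteq [n]$ with $d > n - m_{|S|,n}(\lambda)$. The strategy is to extract from the $\Delta$-Springer condition short exact sequences of vector bundles over $Y_{n,\lambda,s}$ --- assembled from the tautological flag, the fixed subspace $X_{\tilde\lambda}^{n-k}\CC^K$, and the filtration of $\CC^K$ by the subspaces $X_{\tilde\lambda}^{i}\CC^K$ --- whose middle terms are trivial or have rank governed by the shape $\tilde\lambda$, so that the Whitney Sum Formula forces the relevant elementary symmetric polynomials in Chern roots to vanish above the stated degree, the bound $n - m_{|S|,n}(\lambda)$ being precisely the resulting rank count; the powers $x_i^s = 0$ arise because, after the $\Delta$-Springer reduction, the line $V_i/V_{i-1}$ lies in a space on which $X_{\tilde\lambda}$ has Jordan blocks of length at most $s$. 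Already for ordinary Springer fibers this step is the substantial work of De~Concini--Procesi, Tanisaki, and Garsia--Procesi~\cite{GP, Tanisaki}, and in the present generality it is carried out with the $\Delta$-Springer geometry of Griffin--Levinson--Woo~\cite{GLW}; perhaps the cleanest organization --- the precise content of the ``constructed out of Springer fibers'' heuristic --- is to relate $Y_{n,\lambda,s}$ to the genuine Springer fiber $\BBB_{\tilde\lambda}$ in $\Fl(K)$, completing each partial flag canonically by means of $X_{\tilde\lambda}$, and to pull back the Tanisaki relations for $\BBB_{\tilde\lambda}$. I expect this relation check to be the main obstacle.

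Granting the relations, $\varphi$ is a well-defined ring surjection, and the dimension count $\dim_{\CC} R_{n,\lambda,s} = |\OP_{n,\lambda,s}| = \dim_{\CC} H^{\bullet}(Y_{n,\lambda,s})$ forces it to be an isomorphism, carrying $x_i$ to $c_1(\LLL_i)$ as asserted. As consistency checks: for $k = n$, $s = \ell(\lambda)$ this recovers $H^{\bullet}(\BBB_{\lambda}) = R_{\lambda}$ of~\cite{HS}, while for $\lambda = (1^k)$, $s = k$ it furnishes a compact (singular) model for $R_{n,k}$, complementary to the smooth noncompact model $X_{n,k}$ of Section~\ref{Spanning}.
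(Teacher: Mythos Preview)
Your overall architecture---build $\varphi: R_{n,\lambda,s} \to H^{\bullet}(Y_{n,\lambda,s})$, prove surjectivity via an affine paving, and conclude by the dimension count from Theorem~\ref{triple-ungraded-structure}---matches the paper exactly. The differences lie in how the paving and the relations are actually obtained.

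The paper does \emph{not} work inside $\Fl(1^n,K-n)$ or $\BBB_{\tilde\lambda}$ as you propose. Instead, the key maneuver is to embed $\iota: Y_{n,\lambda,s} \hookrightarrow Y_{n,\varnothing,s}$, where $\varnothing = (0,\dots,0)$ is the empty partition with $s$ parts. One constructs an affine paving of $Y_{n,\varnothing,s}$ in which $Y_{n,\lambda,s}$ appears as a closed union of cells; this yields the surjection $\iota^*$ on cohomology. The payoff is that $Y_{n,\varnothing,s}$ is an iterated $\PP^{s-1}$-bundle, so $H^{\bullet}(Y_{n,\varnothing,s}) \cong \CC[x_1,\dots,x_n]/\langle x_1^s,\dots,x_n^s\rangle$ and the relations $x_i^s = 0$ come for free by naturality. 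Your justification of $x_i^s = 0$ via ``Jordan blocks of length at most $s$'' is not correct as stated: the Jordan blocks of $X_{\tilde\lambda}$ have sizes $n-k+\lambda_i$, which are typically much larger than $s$; what is equal to $s$ is the \emph{number} of blocks, and it is the iterated projective bundle structure of $Y_{n,\varnothing,s}$ (reflecting this count) that encodes the relation.

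For the Tanisaki-type relations $e_d(S) = 0$, the paper does not extract them from Whitney sums over $Y_{n,\lambda,s}$ directly, nor by pulling back from $\BBB_{\tilde\lambda}$. Rather, it invokes results of Brundan and Ostrik \cite{BO} on the cohomology of Spaltenstein varieties, which supply the needed vanishing. Your Whitney-sequence heuristic is in the right spirit but, as you acknowledge, is where the real work lies; the paper's route through Spaltenstein varieties and \cite{BO} is how Griffin--Levinson--Woo actually close this gap.
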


The proof of Theorem~\ref{y-cohomology-presentation} uses 
some of the same ideas as the spanning lines case of Theorem~\ref{x-cohomology-presentation},
but is significantly more difficult.
Giving a full treatment is beyond our scope, but we sketch the main ideas.

Coordinate permutation yields
an embedding $\iota: Y_{n,\lambda,s} \hookrightarrow Y_{n,\varnothing,s}$
of $Y_{n,\lambda,s}$ into the space $Y_{n,\varnothing,s}$ labelled by the 
`empty partition' $\varnothing = (0, \dots, 0)$ consisting of $s$ zeros.
A  combinatorial affine paving of $Z_0 \subset Z_1 \subset \cdots \subset Z_m$ 
of $Y_{n,\varnothing,s}$ is constructed for which $\iota(Y_{n,\lambda,s}) = Z_i$ for some $i$.
By virtue of this paving, the induced map
 $\iota^*: H^{\bullet}(Y_{n,\varnothing,s}) \twoheadrightarrow H^{\bullet}(Y_{n,\lambda,s})$ 
 is a surjection.

 An argument using iterated projective bundles shows that
 $Y_{n,\varnothing,s}$ has the same cohomology as $(\PP^{s-1})^n$, i.e.
 \begin{equation}
 H^{\bullet}(Y_{n,\varnothing,s}) = \CC[x_1, \dots, x_n]/\langle x_1^s, \dots, x_n^s \rangle
 \end{equation}
 where $x_i = c_1(\LLL_i)$; by naturality $x_i^s$ also vanishes in 
 $H^{\bullet}(Y_{n,\lambda,s})$.
 Checking that the relevant elementary symmetric polynomials $e_d(S)$ vanish in 
 $H^{\bullet}(Y_{n,\lambda,s})$ is  more involved and uses work of 
 Brundan and Ostrik \cite{BO} on Spaltenstein varieties.
 This given, the generators of $I_{n,\lambda,s}$ vanish in $H^{\bullet}(Y_{n,\lambda,s})$ and
 we have a map of rings
 $\varphi: R_{n,\lambda,s} \rightarrow H^{\bullet}(Y_{n,\lambda,s})$.
 The surjectivity of $\iota^*: H^{\bullet}(Y_{n,\varnothing,s}) \twoheadrightarrow H^{\bullet}(Y_{n,\lambda,s})$
 implies that $\varphi$ is an epimorphism.
 Since the domain and target of $\varphi$ are vector spaces of the same dimension,
 $\varphi$ is an isomorphism.

\section{Future Directions}
\label{Future}

In this final section, we indicate directions for future research on the algebraic combinatorics of delta operators.
In particular, we use anticommuting variables to give models for the full symmetric function $\Delta'_{e_{k-1}} e_n$ rather than just its 
$t = 0$ specialization.
The material in this section is more algebraic  than geometric, but the rich connections between $DR_n$, Hilbert schemes, and affine Springer fibers 
\cite{CO, HaimanVanish}
suggest that geometry will ultimately play a large role in this story.

\subsection{Superspace, the Fields Conjecture, and the Zabrocki Conjecture}  
The diagonal coinvariant ring $DR_n$ is obtained from the classical coinvariant ring $R_n$
by adding in a new set of commuting variables. 
In the past few years, various authors have considered coinvariant quotients involving anticommuting 
variables.

For $n \geq 0$, the {\em superspace} ring of rank $n$ is the unital $\CC$-algebra $\Omega_n$
generated by the $2n$ symbols $x_1, \dots, x_n, \theta_1, \dots, \theta_n$ subject to the relations
\begin{equation}
x_i x_j = x_j x_i \quad \quad x_i \theta_j = \theta_j x_i \quad \quad \theta_i \theta_j = - \theta_j \theta_i
\end{equation}
for $1 \leq i, j \leq n$. 
This algebra admits a decomposition
\begin{equation}
\Omega_n = \CC[x_1, \dots, x_n] \otimes \wedge \{ \theta_1, \dots ,\theta_n \}
\end{equation}
into a symmetric algebra tensor an exterior algebra. 

The term ``superspace" comes from supersymmetry in physics. In this context the $x_i$  
correspond to bosons (so that $x_i^2$ represents two indistinguishable bosons in state $i$)
and the $\theta_i$ correspond to fermions (so that $\theta_i^2 = 0$ reflects the Pauli Exclusion Principle:
two fermions cannot occupy state $i$ at the same time).
We may also view $\Omega_n$ as the ring of holomorphic differential forms on affine $n$-space.

The symmetric group $S_n$ acts diagonally on $\Omega_n$
\begin{equation}
w \cdot x_i := x_{w(i)} \quad \quad  w \cdot \theta_i := \theta_{w(i)} \quad \quad (w \in S_n, \, \, 1 \leq i \leq n).
\end{equation}
and members of the invariant ring $\Omega_n^{S_n}$ are {\em superspace symmetric functions}.
Bases of $\Omega_n^{S_n}$ correspond to pairs $(\lambda, \mu)$ of partitions where $\mu$ has distinct parts
and $\ell(\lambda) + \ell(\mu) \leq n$.
Blondeau-Fournier, Desrosiers, Lapointe, and Mathieu \cite{BDLM} extended Macdonald polynomials
$\widetilde{H}_{\mu}(\xx;q,t)$ to the setting 
 $\Omega_n^{S_n}$ of superspace symmetric functions.

Let $(\Omega_n^{S_n})_+$ be the superspace symmetric functions with vanishing constant term
and let $I \subseteq \Omega_n$ be the (two-sided) ideal in $\Omega_n$ generated by 
$(\Omega_n^{S_n})_+$.  The {\em superspace coinvariant algebra} is the quotient
\begin{equation}
SR_n := \Omega_n/I.
\end{equation}
Like the diagonal coinvariants $DR_n$, the ring $SR_n$ is a bigraded $S_n$-module.
The Fields Institute Combinatorics Group (see \cite{ZabrockiDelta}) posed a conjecture relating
 $SR_n$ to delta operators.

\begin{conjecture}
\label{fields-conjecture}
{\em (Fields Conjecture)}
The bigraded Frobenius image of $SR_n$ is 
\begin{equation}
\grFrob(SR_n;q,z) = \sum_{k = 1}^n z^{n-k} \cdot \Delta'_{e_{k-1}} e_n \mid_{t = 0} 
\end{equation}
where $q$ tracks commuting degree and $z$ tracks anticommuting degree.
\end{conjecture}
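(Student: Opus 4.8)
The plan is to attack the conjecture via orbit harmonics, imitating the strategy used throughout the excerpt for $R_{n,k}$ but now in the superspace ring $\Omega_n$. The first step is to realize $SR_n$ — or rather a ring with the same bigraded Frobenius image — as an associated graded of a coordinate ring of a finite $S_n$-stable locus. Here the right locus should be a ``fattened'' version of the $Z_{n,k}$ from Theorem~\ref{orbit-harmonics-identification}: one replaces each point of a regular-type orbit by a small subscheme in the $\theta$-directions, so that the $\theta$-fermionic degrees get built in. Concretely, one wants a point locus $Z \subseteq \Omega_n^{\,*}$ (a finite subscheme of the split supermanifold $\mathbb{A}^{n|n}$) whose coordinate ring $\CC[Z]$ is, as an ungraded $S_n$-module, a direct sum over $k$ of copies of $\CC[\WWW_{n,k}]$, with the $z$-grading recording which $k$ a summand comes from. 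Since Theorem~\ref{orbit-harmonics-identification} already identifies $\CC[\WWW_{n,k}]$ with $R_{n,k}$ as ungraded modules, and Theorem~\ref{graded-rnk-structure} gives $\grFrob(R_{n,k};q) = (\rev_q\circ\omega)\,\Delta'_{e_{k-1}}e_n\mid_{t=0}$, the $\omega$-twist and the $q$-reversal must be absorbed into the setup (e.g.\ the exterior variables naturally contribute a sign twist via $\Frob(\sign\otimes V)=\omega\,\Frob(V)$, and a choice of filtration direction accounts for $\rev_q$). Getting these bookkeeping twists to line up exactly is the first technical hurdle.

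Second, one must compute $\gr\,\II(Z)$ and show it equals the defining ideal of $SR_n$ — i.e.\ the ideal generated by $(\Omega_n^{S_n})_+$. One direction, that the superspace invariants of positive degree lie in $\gr\,\II(Z)$, should follow from the same generating-function trick used in the proof of Theorem~\ref{orbit-harmonics-identification}: write down explicit elements of $\II(Z)$ (deformed power sums and deformed fermionic invariants) and take top-degree components. The reverse containment — that these generators already cut out all of $\gr\,\II(Z)$ — is a dimension count: one needs $\dim SR_n \le |Z|$, and by Lemma~\ref{associated-graded-lemma} this reduces to exhibiting a spanning set of $SR_n$ of the right size. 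A Gröbner-theoretic argument in $\Omega_n$, using the Demazure-character witnesses of Lemma~\ref{demazure-containment} in each fixed fermionic degree (where the ring looks like a polynomial ring tensored with a fixed exterior monomial), should produce the standard monomial basis; the substaircase monomials $\AAA_{n,k}$ in the $x$'s, times squarefree $\theta$-monomials, are the natural candidates, and their count is $\sum_k \binom{n}{?}\, k!\,\Stir(n,k)$-type, matching $\sum_{k=1}^n z^{n-k}\cdot(\text{dim of the }t=0\text{ delta module at }k)$ after the reversal.

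Third, once the ideal is pinned down, the bigraded Frobenius image is forced: the orbit-harmonics isomorphism \eqref{g-module-iso} gives $\grFrob(SR_n;q,z)=\sum_k z^{n-k}\grFrob(R_{n,k};q)$ up to the twists, and then Theorem~\ref{graded-rnk-structure} finishes it after applying $\rev_q\circ\omega$ consistently. I expect the main obstacle to be the second step — specifically, proving the \emph{upper} bound $\dim SR_n \le |Z|$, equivalently that the proposed monomial set spans $SR_n$. In the purely polynomial case this rested on Lemma~\ref{demazure-containment}, whose proof used a Pieri-type rule for Demazure characters and a sign-reversing involution; in superspace one must show the analogous Demazure-character relations still hold \emph{modulo the fermionic invariants}, which mix the $x$- and $\theta$-sectors and are not simply a degree-by-degree lift of the classical relations. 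A secondary difficulty is purely organizational: keeping the $\omega$-twist, the $\rev_q$, and the shift $k \leftrightarrow n-k$ in the $z$-exponent all consistent, since a sign error in any one of them changes the statement. If the superspace Demazure relations prove intractable directly, a fallback is Griffin's explicit straightening algorithm (cited in Remark~\ref{coefficient-remark}), adapted to carry fermionic monomials along, to produce the spanning set by hand.
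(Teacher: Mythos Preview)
The statement you are attempting to prove is a \emph{conjecture}, and the paper does not contain a proof of it. The paper explicitly says that ``the Fields Conjecture has proven quite challenging'' and that ``it remains open to show that $SR_n$ has the expected dimension $\sum_{k=1}^n k!\cdot\Stir(n,k)$.'' So there is no paper proof to compare against; what you have written is a research program rather than a proof, and it should be evaluated as such.

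Your outline correctly isolates the crux: the upper bound $\dim SR_n \le \sum_k k!\cdot\Stir(n,k)$, equivalently a spanning set of the right size. This is exactly the part that is open. Your proposed mechanism for producing it --- Gr\"obner theory in $\Omega_n$ with Demazure witnesses in each fixed fermionic degree, or a superspace version of Griffin's straightening --- does not work as stated. Fixing a squarefree $\theta$-monomial does \emph{not} reduce the problem to an $R_{n,k}$-type quotient, because the ideal $\langle(\Omega_n^{S_n})_+\rangle$ contains mixed invariants such as $\sum_i x_i^r\theta_i$ which move elements between $\theta$-sectors; the relations you need in a given $\theta$-degree are not generated by the purely bosonic $e_d$'s and powers $x_i^k$ that drive Lemma~\ref{demazure-containment}. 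No analogue of the Demazure containment lemma is known for these mixed generators, and producing one would essentially solve the conjecture.

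There is also a structural gap in the orbit-harmonics step. The machinery of \S\ref{Quotient} (Lagrange interpolation, $\gr\,\II(Z)$, Lemma~\ref{associated-graded-lemma}) is set up for reduced finite loci in $\CC^n$; your ``finite subscheme of $\mathbb{A}^{n|n}$'' is not a reduced locus, and it is not clear what $\II(Z)$ or the associated-graded construction mean there, nor why the resulting ideal would be generated by \emph{all} of $(\Omega_n^{S_n})_+$ rather than some subset. Even granting a workable supergeometric setup, the easy direction (invariants lie in $\gr\,\II(Z)$) would only recover the known lower bound $\Hilb(SR_n;q,z)\ge\sum_k z^{n-k}[k]!_q\Stir_q(n,k)$ already obtained by the Fields group; the hard direction remains the dimension upper bound. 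Finally, your handling of the $\omega$ and $\rev_q$ twists is hand-waved: the diagonal $S_n$-action on the $\theta$'s is by subscript permutation, not a sign twist, so the appearance of $\omega$ is not simply ``exterior variables contribute $\sign$'' and would need a genuine argument.
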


Equivalently, the Fields Conjecture predicts that $SR_n$ is related to the quotient rings $R_{n,k}$
and the spaces $X_{n,k}$ of spanning line configurations 
via 
\begin{multline}
\grFrob(SR_n;q,z) \\ = \sum_{k = 1}^n z^{n-k} \cdot (\rev_q \circ \omega) \grFrob(R_{n,k};q)  = 
\sum_{k = 1}^n z^{n-k} \cdot (\rev_q \circ \omega) \grFrob(H^{\bullet}(X_{n,k});q).
\end{multline}
In terms of bigraded Hilbert series, this specializes to
\begin{equation}
\label{fields-bigraded-hilbert}
\Hilb(SR_n;q,z) = \sum_{k = 1}^n z^{n-k} \cdot [k]!_q \cdot \Stir_q(n,k)
\end{equation}
so that $\dim SR_n$ is the total number of ordered set partitions of $[n]$.

Despite the prominence of the ring $\Omega_n$ throughout mathematics and the natural definition
of the quotient $SR_n$, the Fields Conjecture has proven quite challenging. 
Rhoades and Wilson proved \cite{RWQuotient} the predicted formula \eqref{fields-bigraded-hilbert} for the bigraded Hilbert 
series of $SR_n$.
Swanson and Wallach proved \cite{SW} that the alternating subspace $SR_n^{\sign} \subset SR_n$
has bigraded Hilbert series as predicted by Conjecture~\ref{fields-conjecture}, i.e.
\begin{equation}
\Hilb(SR_n^{\sign};q,z) = \sum_{k = 1}^n z^{n-k} \cdot q^{{k \choose 2}}  {n-1 \brack k-1}_q.
\end{equation}
Ideally, the quotient $SR_n$ will have some direct connection to the geometry of $X_{n,k}$, but such a link
is yet to be discovered.

The ring $R_{n,k}$, the variety $X_{n,k}$ and (conjecturally) the superspace coinvariants $SR_n$
give algebraic and geometric models for the $t = 0$ specialization of $\Delta'_{e_{k-1}} e_n$.
Zabrocki defined \cite{ZabrockiDelta} an extension of $SR_n$ which (conjecturally) gives an algebraic model
for $\Delta'_{e_{k-1}} e_n$ itself.
Let $\Omega_n[y_1, \dots, y_n]$ be the superspace ring with $n$ commuting variables $y_1, \dots, y_n$ 
adjoined. Formally, we have 
\begin{equation}
\Omega_n[y_1, \dots, y_n] = \CC[x_1, \dots, x_n] \otimes \CC[y_1, \dots, y_n] \otimes \wedge 
\{ \theta_1, \dots, \theta_n\}.
\end{equation}
The group $S_n$ acts `triply diagonally' on this space. Modding out by the $S_n$-invariants with 
vanishing constant term gives the {\em superspace diagonal coinvariants}
\begin{equation}
SDR_n := \Omega_n[y_1, \dots, y_n] / \langle \Omega_n[y_1, \dots, y_n]^{S_n}_+ \rangle
\end{equation}
which carry a triply graded action of $S_n$.

\begin{conjecture} \cite{ZabrockiDelta}
\label{zabrocki-conjecture}
{\em (Zabrocki Conjecture)}
The triply graded Frobenius image of $SDR_n$ is given by
\begin{equation}
\grFrob(SDR_n; q, t, z) = \sum_{k = 1}^n z^{n-k} \cdot \Delta'_{e_{k-1}} e_n
\end{equation}
where $q$ tracks $x$-degree, $t$ tracks $y$-degree, and $z$ tracks $\theta$-degree.
\end{conjecture}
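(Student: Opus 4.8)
The plan is to establish the identity as two coefficientwise inequalities between the Hall--Littlewood (equivalently Schur) expansions of its two sides, after decomposing $SDR_n=\bigoplus_{j=0}^{n}SDR_n^{(j)}$ by $\theta$-degree $j$. In these terms the assertion reads $\grFrob(SDR_n^{(n-k)};q,t)=\Delta'_{e_{k-1}}e_n$ for $1\le k\le n$, together with $SDR_n^{(n)}=0$; the last point is already transparent, since $\sum_i\theta_i$ is an $S_n$-invariant of vanishing constant term and $(\sum_i\theta_i)\cdot\theta_2\theta_3\cdots\theta_n=\theta_1\theta_2\cdots\theta_n$, so the top exterior power lies in the defining ideal. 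Two boundary cases are theorems I would use both as inputs and as consistency checks: setting $\theta=0$ isolates $SDR_n^{(0)}=DR_n$ and gives Haiman's identity $\grFrob(DR_n;q,t)=\nabla e_n=\Delta'_{e_{n-1}}e_n$ \cite{HaimanVanish}, while the now-proven Rise form of the Delta Conjecture \cite{DM,BHMPS} supplies the combinatorial model $\Delta'_{e_{k-1}}e_n=\Rise_{n,k}(\xx;q,t)$ that makes the right-hand side accessible to a basis count.

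For the $\preceq$ direction I would produce an explicit $\CC$-spanning set of $SDR_n$ indexed by \emph{decorated parking functions}: a size-$n$ word parking function $P$ with a chosen subset of its rises (the indices $i$ with $a_i(P)>a_{i-1}(P)$) marked by $\theta$'s, the marked rises carrying the $z$-degree and the remaining data carrying $q^{\dinv(P)}t^{\area(P)}$ exactly as in $\Rise_{n,k}(\xx;q,t)$; in $\theta$-degree $0$ this must reduce to the Carlsson--Oblomkov parking-function basis of $DR_n$ \cite{CO}. Concretely one adjoins, to the classical reductions coming from $\CC[\xx_n,\yy_n]^{S_n}_+$, the ``fermionic'' relations generated by the invariants $\sum_i\theta_i x_i^a y_i^b$ and by the invariants of $\theta$-degree $\ge 2$, and then runs a straightening/Gr\"obner argument --- in the spirit of the explicit witness relations of \cite{HRS} and of Griffin's straightening algorithm for the rings $R_{n,\lambda,s}$ \cite{Griffin}, combined with an orbit-harmonics degeneration of a suitable point locus as in Section~\ref{Quotient} --- to show every monomial of $\Omega_n[\yy_n]$ is congruent to a $\CC$-combination of decorated-parking-function monomials. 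The Koszul-type maps between the pieces $SDR_n^{(j)}$ (multiplication by $\sum_i\theta_i$, and contraction along the $\yy$-variables) should organize this induction. It then remains to match the resulting trivariate generating function with $\sum_{k=1}^{n}z^{n-k}\Rise_{n,k}(\xx;q,t)$ by a statistic-preserving bijection between decorated parking functions and the combinatorial objects of the Rise formula.

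The $\succeq$ direction is the crux, and I expect it to be the genuine obstacle. Already in $\theta$-degree $0$ the bound $\dim DR_n\ge(n+1)^{n-1}$ is precisely Haiman's theorem \cite{HaimanVanish}, whose only known proof passes through the geometry of $\Hilb^n(\CC^2)$ and vanishing theorems for tautological sheaves; there is no elementary argument, so I would not expect a purely combinatorial proof of linear independence of the decorated basis. The realistic route is geometric: one wants an ind-variety --- morally an exterior/differential-forms enhancement of $\Hilb^n(\CC^2)$, or a family of Hilbert-scheme-type spaces assembled along the spanning-line varieties $X_{n,k}$ of Section~\ref{Spanning} --- whose equivariant cohomology or coordinate ring realizes $SDR_n$ with the correct three gradings, so that Haiman-style localization computes $\grFrob(SDR_n;q,t,z)$ directly. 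As a first target one can specialize $\yy=0$, which is the Fields Conjecture: there the Fields group has already established the matching \emph{lower} bound on the bigraded Hilbert series \cite{ZabrockiDelta}, and Swanson--Wallach \cite{SW} have computed the sign-isotypic component, so upgrading that Hilbert-series inequality to an equality of Frobenius images --- for instance by exhibiting a degeneration of the loci $Z_{n,k}$ of Section~\ref{Quotient} that also carries the exterior generators --- would already be a substantial step and the natural proving ground for the techniques the general case requires.
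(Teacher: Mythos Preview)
The statement you are attempting to prove is the Zabrocki Conjecture, which the paper records as an \emph{open conjecture}; there is no proof in the paper to compare against. What you have written is not a proof but a research outline, and to your credit you say so: you correctly observe that the $\succeq$ direction already contains Haiman's $\grFrob(DR_n;q,t)=\nabla e_n$ as its $\theta$-degree $0$ case, whose only known argument is geometric, and you propose seeking an enhancement of the Hilbert scheme without actually constructing one.

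The $\preceq$ direction is equally unestablished in your write-up. Sentences such as ``runs a straightening/Gr\"obner argument --- in the spirit of \dots'' and ``should organize this induction'' mark precisely the places where the mathematics has not been done. Even the specialization $\yy=0$ is the Fields Conjecture~\ref{fields-conjecture}, which the paper notes is open: only the inequality $\Hilb(SR_n;q,z)\ge\sum_k z^{n-k}[k]!_q\Stir_q(n,k)$ is known, not the matching upper bound, so a spanning set of the correct size is not available even in that simpler setting. Your boundary observations ($SDR_n^{(n)}=0$ via $(\sum_i\theta_i)\theta_2\cdots\theta_n=\theta_1\cdots\theta_n$, and $SDR_n^{(0)}=DR_n$) and your use of the Rise Theorem to make the right side combinatorial are all correct and in line with the discussion surrounding Conjectures~\ref{fields-conjecture}, \ref{zabrocki-conjecture}, and \ref{double-v-conjecture}, but they constitute a reasonable plan of attack rather than a proof.
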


Setting the $y$-variables to zero in the Zabrocki Conjecture recovers the Fields Conjecture~\ref{fields-conjecture}.
Setting the $\theta$-variables to zero recovers Haiman's expression
$\grFrob(DR_n; q,t) = \nabla e_n$ for the bigraded Frobenius image of the diagonal coinvariant ring 
\cite{HaimanVanish}.
As the only known proof of $\grFrob(DR_n;q,t) = \nabla e_n$ involves isospectral Hilbert schemes,
a proof of the Zabrocki Conjecture is likely to involve substantial geometric innovation.

\subsection{Superspace Vandermondes and Harmonics} While the coinvariant quotients $SR_n$ and $SDR_n$
have so far resisted analysis, there is a related family of modules defined using superspace which 
is better understood. To motivate these modules, we  recall an alternative perspective
on quotients of $\CC[x_1, \dots, x_n]$.

Given $f = f(x_1, \dots, x_n) \in \CC[x_1, \dots, x_n]$, let $\partial f$ be the differential operator obtained by replacing 
each variable $x_i$ with the corresponding partial derivative $\partial/\partial x_i$:
\begin{equation}
\partial f := f(\partial/\partial x_1, \dots, \partial/\partial x_n).
\end{equation}
The operators $\partial f$ give rise to a bilinear pairing $\langle - , - \rangle$ on $\CC[x_1, \dots, x_n]$
defined on monomials by
\begin{equation}
\langle m, m' \rangle := \text{constant term of $(\partial m) m'$}.
\end{equation}
If $I \subset \CC[x_1, \dots, x_n]$ is a homogeneous ideal, we have a direct sum decomposition
$\CC[x_1, \dots, x_n] = I \oplus I^{\perp}$ where
\begin{equation}
I^{\perp} = \{ f \in \CC[x_1, \dots, x_n] \,:\, \langle f, g \rangle = 0 \text{ for all $g \in I$} \}
\end{equation}
and the composite map
\begin{equation}
I^{\perp} \hookrightarrow \CC[x_1, \dots, x_n] \twoheadrightarrow \CC[x_1, \dots, x_n]/I
\end{equation}
is an isomorphism of graded vector spaces. The space $I^{\perp}$ is the {\em harmonic space} (or {\em Macaulay inverse system})
of the quotient $\CC[x_1, \dots, x_n]/I$. Although $I^{\perp}$ is almost never closed under multiplication, it allows
for the study of $\CC[x_1, \dots, x_n]/I$ without the use of cosets.

The harmonic space of the classical coinvariant ring $R_n$ has a nice description.
The {\em Vandermonde determinant} $\delta_n \in \CC[x_1, \dots, x_n]$ is given by
\begin{equation}
\delta_n := \varepsilon_n \cdot (x_1^{n-1} x_2^{n-2} \cdots x_{n-1}^1 x_n^0)
\end{equation}
where $\varepsilon_n := \sum_{w \in S_n} \sign(w) \cdot w \in \CC[S_n]$ is the antisymmetrizing
element of the symmetric group algebra.
The harmonic space $V_n := I_n^{\perp}$ of the coinvariant quotient
$R_n = \CC[x_1, \dots, x_n]/I_n$ is the smallest linear subspace of $\CC[x_1, \dots, x_n]$
containing $\delta_n$ which is closed under the operators $\partial/\partial x_i$  for all $1 \leq i \leq n$.
The harmonic spaces of the generalized coinvariant rings $R_{n,k}, R_{\lambda},$ and $R_{n,\lambda,s}$
also admit descriptions in terms of Vandermondes \cite{RYZ}.

Harmonic theory extends to the superspace ring $\Omega_n$.  
For $1 \leq i \leq n$, let $\partial/\partial \theta_i: \Omega_n \rightarrow \Omega_n$ by the 
$\CC[x_1, \dots, x_n]$-linear operator which acts on $\theta$-monomials by
\begin{equation}
\partial/\partial \theta_i: \theta_{j_1} \cdots \theta_{j_r} = \begin{cases}
(-1)^{s-1} \theta_{j_1} \cdots \widehat{\theta_{j_s}} \cdots \theta_{j_r} & \text{if $i = j_s$} \\
0 & \text{if $i \neq j_1, \dots, j_r$}
\end{cases}
\end{equation}
for all distinct indices $1 \leq j_1, \dots, j_r \leq n$ where $\widehat{\theta_j}$ denotes omission.
 The map $\partial/\partial \theta_i$ 
is  called a {\em contraction} operator.
The $\partial/\partial x_i$ and $\partial/\partial \theta_i$ satisfy the same relations 
as the superspace generators: we have
\begin{equation}
\partial/\partial x_i \partial/\partial x_j = \partial/\partial x_j \partial / \partial x_i \quad
\partial/\partial x_i \partial/\partial \theta_j = \partial/\partial \theta_j \partial/\partial x_i \quad
\partial/\partial \theta_i \partial/\partial \theta_j = - \partial/\partial \theta_j \partial/\partial \theta_i
\end{equation}
for $1 \leq i , j \leq n$.  Consequently, for any
$f = f(x_1, \dots, x_n, \theta_1, \dots, \theta_n) \in \Omega_n$ we have an endomorphism
$\partial f: \Omega_n \rightarrow \Omega_n$ given by
\begin{equation}
\partial f = f(\partial/\partial x_1, \dots, \partial/\partial x_n, \partial/\partial \theta_1, \dots, \partial/\partial \theta_n).
\end{equation}

The operators $\partial f$ give rise to an inner product on $\Omega_n$ characterized by 
\begin{equation}
\langle m , m' \rangle = \text{constant term of $(\partial m)m'$}
\end{equation}
whenever $m, m' \in \Omega_n$ are superspace monomials
(expressions of the form $x_1^{a_1} \cdots x_n^{a_n} \theta_{j_1} \cdots \theta_{j_r}$).
If $I \subset \Omega_n$ is a bihomogeneous ideal, the composite
\begin{equation}
I^{\perp} \hookrightarrow \Omega_n \twoheadrightarrow \Omega_n/I
\end{equation}
is an isomorphism where 
$I^{\perp} = \{ f \in \Omega_n \,:\, \langle f, g \rangle = 0 \text{ for all $g \in I$} \}.$
As in the commutative setting, the space $I^{\perp}$ is called the {\em harmonic space}
of $\Omega_n/I$.

The harmonic space of the superspace coinvariant ring $SR_n$ may be described as follows.
For $j \geq 1$, let $d_j: \Omega_n \rightarrow \Omega_n$ be the linear operator
\begin{equation}
d_j(f) := \sum_{i = 1}^n \theta_i \cdot \frac{\partial^j f}{\partial x_i^j}.
\end{equation}
When $j = 1$, the operator $d_1$ is the usual total derivative on differential forms.
These `higher' total derivatives conjecturally generate the superspace harmonics.
The following result was conjectured by the Fields Group (personal communication) and proven by Rhoades and Wilson \cite{RWQuotient}.

\begin{theorem}
\label{superspace-harmonic-generation}
 (Rhoades-Wilson \cite{RWQuotient})
 {\em (Operator Theorem)}
The harmonic space to the superspace coinvariant ring $SR_n$ is the smallest linear subspace of $\Omega_n$ which is closed
under the operators $(\partial/\partial x_1), \dots , (\partial/\partial x_n)$ and contains the $2^{n-1}$
superspace elements
\begin{equation*}
d_1^{\epsilon_1} d_2^{\epsilon_2} \cdots d_{n-1}^{\epsilon_{n-1}} (\delta_n) \in \Omega_n
\end{equation*}
where $\delta_n \in \CC[x_1, \dots, x_n]$ is the Vandermonde determinant and
$\epsilon_1, \epsilon_2, \dots, \epsilon_{n-1} \in \{0,1\}$.
\end{theorem}

Haiman proved \cite{HaimanVanish} an analogous result to Theorem~\ref{superspace-harmonic-generation} which describes the harmonic space
of the diagonal coinvariant ring $DR_n$ in terms of polarization operators.
Theorem~\ref{superspace-harmonic-generation} gives a model for $SR_n$ involving honest superspace elements
rather than cosets, thus potentially giving a method for finding the bigraded $S_n$-structure of $SR_n$ and proving the Fields Conjecture.
Josh Swanson has made substantial progress in uncovering this module structure
of this harmonic space by relating the superspace elements $d_1^{\epsilon_1} d_2^{\epsilon_2} \cdots d_{n-1}^{\epsilon_{n-1}} (\delta_n) $ to
 Tanisaki quotients \cite{Swanson}, but much  remains to be done.

Rhoades and Wilson \cite{RWSuper} exploited the anticommuting $\theta$-variables to 
generalize the Vandermonde $\delta_n$ to $\Omega_n$ and give an
alternative to the harmonic space of Conjecture~\ref{superspace-harmonic-generation}.
For $k \leq n$, the {\em superspace Vandermonde} $\delta_{n,k} \in \Omega_n$ is given by
\begin{equation}
\delta_{n,k} := \varepsilon_n \cdot \left( x_1^{k-1} \cdots x_{n-k}^{k-1} x_{n-k+1}^{k-1} 
x_{n-k+2}^{k-2} \cdots x_{n-1}^1 x_n^0 \cdot \theta_1 \cdots \theta_{n-k} \right)
\end{equation}
so that, for example, we have
\begin{equation*}
\delta_{3,2} = \varepsilon_3 \cdot (x_1 x_2 \theta_1) = 
x_1 x_2 \theta_1 - x_1 x_2 \theta_2 - x_1 x_3 \theta_1 + x_2 x_3 \theta_2 + x_1 x_3 \theta_3 - x_2 x_3 \theta_3.
\end{equation*}
Thanks to the $\theta$-variables, the operator $\varepsilon_n$ does not annihilate the generating monomial
of $\delta_{n,k}$ and we obtain nonzero elements of $\Omega_n$.

The monomials in $\delta_{n,k}$ appear among those in 
the elements $d_1 d_2 \cdots d_{n-k} (\delta_n)$ of Conjecture~\ref{superspace-harmonic-generation},
but $\delta_{n,k} \neq d_1 d_2 \cdots d_{n-k}(\delta_n)$ as elements of $\Omega_n$.
In keeping with the harmonic paradigm, we use the $\delta_{n,k}$ as seeds to grow modules.

\begin{defn}
Let $V_{n,k}$ be the smallest linear subspace of $\Omega_n$ which 
contains $\delta_{n,k}$ and is closed under the operators $\partial/\partial x_i$ for $1 \leq i \leq n$.
\end{defn}

The spaces $V_{n,k}$ are concentrated in $\theta$-degree $n-k$. Considering $x$-degree alone
gives $V_{n,k}$ the structure of a singly graded $S_n$-module. Rhoades and Wilson proved \cite{RWSuper}
that
\begin{equation}
\grFrob(V_{n,k};q) = \Delta'_{e_{k-1}} e_n \mid_{t = 0}
\end{equation}
and conjectured that the composite
\begin{equation}
\label{bridge-composite-single}
\bigoplus_{k = 1}^n V_{n,k} \hookrightarrow \Omega_n \twoheadrightarrow SR_n
\end{equation}
is an isomorphism. If \eqref{bridge-composite-single} is bijective, the Fields Conjecture would follow.
Superspace Vandermondes can also be used to define modules inside $\Omega_n[y_1, \dots, y_n]$.

\begin{defn}
\label{double-v-definition}
Let $\VV_{n,k}$ be the smallest linear subspace of $\Omega_n[y_1, \dots, y_n]$ which
\begin{itemize}
\item contains the superspace Vandermonde $\delta_{n,k}$ (in the $x$-variables and $\theta$-variables alone),
\item is closed under the differentiation operators $\partial/\partial x_i$ and $\partial/\partial y_i$ for $1 \leq i \leq n$,
and
\item if closed under the {\em polarization operator} 
$y_1 \partial^j/\partial x_1^j + \cdots + y_n \partial^j/\partial x_n^j$ for each $j \geq 1$.
\end{itemize}
\end{defn}

The polarization operators  in Definition~\ref{double-v-definition} lower $x$-degree by $j$
and raise $y$-degree by 1.  
The space $\VV_{n,k}$ is closed under the action of $S_n$ and concentrated in 
$\theta$-degree $n-k$.  By considering $x$-degree and $y$-degree, the space $\VV_{n,k}$
is a doubly graded $S_n$-module.  Its bigraded $S_n$-structure is conjecturally governed by delta operators.

\begin{conjecture} (Rhoades-Wilson \cite{RWSuper})
\label{double-v-conjecture}
We have $\grFrob(\VV_{n,k};q,t) = \Delta'_{e_{k-1}} e_n$.  Furthermore, the composite map
\begin{equation*}
\bigoplus_{k = 1}^n \VV_{n,k} \hookrightarrow \Omega_n[y_1, \dots, y_n] \twoheadrightarrow SDR_n
\end{equation*}
is bijective.
\end{conjecture}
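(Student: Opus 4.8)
\medskip
\noindent\emph{Towards Conjecture~\ref{double-v-conjecture}.} The plan is to split the conjecture into its two assertions and to isolate a single hard input. The first assertion, the bigraded Frobenius formula $\grFrob(\VV_{n,k};q,t) = \Delta'_{e_{k-1}}e_n$, is intrinsic to the module $\VV_{n,k}$ and should be reachable by upgrading, with the auxiliary parameter $t$, the symmetric-function arguments that prove the known specialization $\grFrob(V_{n,k};q) = \Delta'_{e_{k-1}}e_n\mid_{t=0}$, together with the $\Theta$-operator technology of D'Adderio-Mellit \cite{DM} and Blasiak-Haiman-Morse-Pun-Seelinger \cite{BHMPS}. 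The second assertion, bijectivity of $\bigoplus_{k=1}^n\VV_{n,k}\to SDR_n$, combined with the first would give $\grFrob(SDR_n;q,t,z) = \sum_{k=1}^n z^{n-k}\Delta'_{e_{k-1}}e_n$, i.e.\ the Zabrocki Conjecture~\ref{zabrocki-conjecture}, whose $\theta$-free specialization is Haiman's $\grFrob(DR_n;q,t) = \nabla e_n$ \cite{HaimanVanish}. So one should not expect a short proof: the goal is to establish the Frobenius formula and the injectivity directly --- which already secures the ``lower bound'' half of everything --- and to isolate the ``upper bound'' for $\dim SDR_n$ as the single remaining obstacle.

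\emph{The Frobenius formula.} The structural clue is that $\VV_{n,k}$ is concentrated in $\theta$-degree $n-k$, which matches the degree of $e_{n-k}$ in an identity of the shape $\Delta'_{e_{k-1}}e_n = \Theta_{e_{n-k}}\nabla e_k$. Moreover the generating monomial of the superspace Vandermonde factors as $\bigl(x_1^{k-1}\cdots x_{n-k}^{k-1}\,\theta_1\cdots\theta_{n-k}\bigr)\cdot\bigl(x_{n-k+1}^{k-1}x_{n-k+2}^{k-2}\cdots x_n^{0}\bigr)$, whose second factor is the rank-$k$ Vandermonde monomial in the last $k$ variables; and $\VV_{k,k}$ is exactly the classical diagonal harmonics, with $\grFrob(\VV_{k,k};q,t) = \nabla e_k$. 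I would try to promote this to a module-level incarnation of $\Theta_{e_{n-k}}$ carrying $\VV_{k,k}$ to $\VV_{n,k}$; failing a clean functorial statement, I would run a recursion (in $n$, or on $k$) for $\grFrob(\VV_{n,k};q,t)$ matching a compositional recursion for $\Theta_{e_{n-k}}\nabla e_k$, anchored at $k=n$ (Haiman) and at $t=0$ (the Rhoades-Wilson theorem). Producing, in each bidegree, enough explicit elements of $\VV_{n,k}$ to meet $\Theta_{e_{n-k}}\nabla e_k$ from below, together with a matching upper bound, is the technical heart here; it is the bigraded upgrade of the combinatorics already used for $V_{n,k}$.

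\emph{Injectivity.} Unlike in the classical case, one cannot argue that $\VV_{n,k}$ is the apolar complement of the ideal of $S_n$-invariants: for $k<n$ the seed $\delta_{n,k}$ is \emph{not} harmonic for $SDR_n$ --- it differs from the harmonic element $d_1 d_2\cdots d_{n-k}(\delta_n)$ of Conjecture~\ref{superspace-harmonic-generation}, and already $(\partial/\partial x_1+\cdots+\partial/\partial x_n)\,\delta_{n,k}\neq 0$ --- so $\VV_{n,k}$ is some other vector-space complement. I would instead combine the Frobenius formula with the available lower bounds: the Fields group's Hilbert-series inequality and the Swanson-Wallach computation of the sign-isotypic component \cite{SW} show that $SDR_n$ is, bidegree by bidegree, at least as large as predicted, and knowing $\grFrob(\VV_{n,k};q,t)$ together with the $S_n$-module structure from the previous step should force $\VV_{n,k}$ to meet the invariant ideal trivially. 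Because the invariant ring is $\theta$-graded, injectivity of the direct sum reduces to injectivity of each summand, so this suffices.

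\emph{The obstacle.} What then remains is surjectivity of $\bigoplus_{k=1}^n\VV_{n,k}\twoheadrightarrow SDR_n$, equivalently the assertion that $\delta_{n,k}$ generates the whole $\theta$-degree-$(n-k)$ quotient of $SDR_n$ under the differentiation and polarization operators of Definition~\ref{double-v-definition}. This is precisely the hard ``upper bound'' half of the Zabrocki Conjecture; there is at present no combinatorial control of $\dim SDR_n$, and its $\theta$-free shadow is Haiman's $\dim DR_n = (n+1)^{n-1}$, which is known only through the isospectral Hilbert scheme. I therefore expect this stage to demand new geometry --- a $\Delta$-analogue of the isospectral Hilbert scheme of points in $\CC^2$, or of the affine Springer fibers of Carlsson-Oblomkov \cite{CO}, bearing a coordinate ring or cohomology that computes $SDR_n$ in a manifestly $\theta$-graded way compatible with the spaces $\VV_{n,k}$. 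Short of that, the preceding steps already yield $\grFrob(SDR_n;q,t,z)\geq \sum_{k=1}^n z^{n-k}\Delta'_{e_{k-1}}e_n$ coefficientwise, and hence the lower-bound halves of both the Zabrocki and Rhoades-Wilson conjectures.
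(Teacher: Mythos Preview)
The paper does not prove this statement: it is stated as an open \emph{conjecture} of Rhoades--Wilson, and the paper's only further remark is that it would imply Zabrocki's Conjecture~\ref{zabrocki-conjecture}. There is therefore no ``paper's own proof'' to compare against. Your submission is, appropriately, a research plan rather than a proof, and you flag this yourself by titling it ``Towards'' and isolating a hard obstacle at the end.

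As a plan it is broadly sensible, with one soft spot. Your outline for the Frobenius formula via a module-theoretic realization of $\Theta_{e_{n-k}}$ carrying $\VV_{k,k}$ to $\VV_{n,k}$ is a reasonable heuristic (and the identity $\Delta'_{e_{k-1}} e_n = \Theta_{e_{n-k}} \nabla e_k$ is indeed the right organizing principle), but it is speculation, not an argument. The injectivity step, however, has an actual gap: knowing $\grFrob(\VV_{n,k};q,t)$ together with coefficientwise lower bounds on $SDR_n$ does \emph{not} force $\VV_{n,k}$ to meet the invariant ideal trivially. A dimension or character match between a subspace and a quotient says nothing about whether the subspace avoids the kernel of the projection unless you have additional structure (e.g.\ an inner product making the subspace orthogonal to the ideal). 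You correctly observe that the harmonic route is blocked because $\delta_{n,k}$ is not annihilated by $\partial/\partial x_1 + \cdots + \partial/\partial x_n$, but then the substitute mechanism you invoke does not exist. Injectivity is genuinely part of what is open here, not a consequence of the other pieces.

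Your final diagnosis is correct: the surjectivity (equivalently, the upper bound on $\dim SDR_n$) subsumes Haiman's $(n+1)^{n-1}$ theorem and is the essential obstruction. No purely combinatorial approach is currently known even for that special case, so expecting new geometry is the right instinct.
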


Conjecture~\ref{double-v-conjecture} implies Zabrocki's Conjecture~\ref{zabrocki-conjecture}.
Since the first assertion of Conjecture~\ref{double-v-conjecture} is known at $t = 0$ but the corresponding
coinvariant statement (the Fields Conjecture~\ref{fields-conjecture})
 remains open, it appears that Vandermondes could give an 
easier road to delta operator modules than coinvariants.

\subsection{The Theta and Boson-Fermion coinvariant conjectures}
The modules in the Fields and Zabrocki Conjectures suggest a natural generalization.
Given integers $n, r, \ell \geq 0$, consider an $r \times n$ matrix $X_{r \times n} = (x_{i,j})$ of 
commuting variables and an $\ell \times n$ matrix $\Theta_{\ell \times n} =  (\theta_{i,j})$ of anticommuting variables
and let
\begin{equation}
\SSS(n;r, \ell) := \CC[X_{r \times n}] \otimes \wedge \{ \Theta_{\ell \times n} \}
\end{equation}
be the $\CC$-algebra generated by these variables. This is a symmetric algebra of rank $r \times n$ 
tensor an exterior algebra of rank $\ell \times n$.
The symmetric group $S_n$ acts on the columns of the matrices $X_{r \times n}$ and $\Theta_{\ell \times n}$.
We let $I \subset \SSS(n;r,\ell)$ be the ideal generated by $S_n$-invariants with vanishing constant terms and
write
\begin{equation}
\RRR(n;r,\ell) := \SSS(n;r,\ell)/I
\end{equation}
for the corresponding quotient ring. The ring $\RRR(n;r,\ell)$ is multigraded, with $r$ flavors of commuting
graded and $\ell$ flavors of anticommuting grading.
We have met a number of special cases of $\RRR(n;r,\ell)$.
\begin{itemize}
\item When $r = 1$ and $\ell = 0$ we have the classical coinvariant ring $\RRR(n;1,0) = R_n$.
\item When $r = 2$ and $\ell = 0$ we have the diagonal coinvariants $\RRR(n;2,0) = DR_n$.
\item When $r = \ell = 1$ we have the superspace coinvariants $\RRR(n;1,1) = SR_n$.
\item When $r = 2$ and $\ell = 1$ we have the module 
$\RRR(n;2,1) = SDR_n$.
\end{itemize}

The ring $\RRR(n;r,\ell)$ has the structure of a multigraded $S_n$-module. 
D'Adderio, Iraci, and Vanden Wyngaerd \cite{DIW} introduced the {\em theta operators} on the ring of symmetric 
functions and used them to conjecturally describe (part of) the modules $\RRR(n;2,\ell)$ in the case
of $r = 2$ sets of commuting variables.
We define their operators and state their conjecture.

Let $\Pi: \Lambda \rightarrow \Lambda$ be the Macdonald eigenoperator defined by
\begin{equation}
\Pi: \widetilde{H}_{\mu}(\xx;q,t) \mapsto \prod_{(i,j)} (1 - q^{i-1} t^{j-1}) \cdot \widetilde{H}_{\mu}(\xx;q,t)
\end{equation}
where the product ranges over the coordinates $(i,j) \neq (1,1)$ of cells in the Young diagram of $\mu$.
For example, if $\mu = (3,2)$ we fill the Young diagram as in
\begin{equation*}
\begin{Young}
 \cdot & q & q^2 \cr t & qt
\end{Young}
\end{equation*}
so that 
\begin{equation*}
\Pi:  \widetilde{H}_{(3,2)}(\xx;q,t) \mapsto (1-q)(1-q^2)(1-t)(1-qt) \cdot \widetilde{H}_{(3,2)}(\xx;q,t).
\end{equation*}
Omitting the cell $(1,1)$ in the eigenvalue of $\widetilde{H}_{\mu}(\xx;q,t)$ ensures that the operator
 $\Pi: \Lambda \rightarrow \Lambda$ is invertible.
 
 Given a symmetric function $F \in \Lambda$, the {\em theta operator}\footnote{not to be confused with
 the anticommuting $\theta$-variables!} 
 $\Theta_F: \Lambda \rightarrow \Lambda$
 is defined by
 \begin{equation}
 \Theta_F: G \mapsto \Pi \cdot  F^* \cdot \Pi^{-1} \cdot G
 \end{equation}
 for any symmetric function $G$. Here $F^*$ denotes multiplication by the plethystically transformed
 version $F \left[ \frac{\xx}{(1-q)(1-t)} \right]$ of the symmetric function $F$ labeling $\Theta_F$.

 The theta operators were introduced in \cite{DIW} to give a compositional refinement of the
 symmetric function side $\Delta'_{e_{k-1}} e_n$ of the Delta Conjecture involving the $\nabla$ operator.
 This refinement was the basis of the D'Adderio-Mellit proof \cite{DM} of the Rise Version of the Delta Conjecture.
 Theta operators are also expected to have ties to coinvariant theory.

 \begin{conjecture}
 \label{theta-conjecture}
 (D'Adderio-Iraci-Vanden Wyngaerd \cite{DIW})  Let $\jj = (j_1, j_2)$ be a pair of nonnegative integers
 with sum $j_1 + j_2 < n$. The piece $R(n;2,2)_{\jj}$ of $\theta$-bidegree $\jj$
 is, by considering commuting degrees, a bigraded $S_n$-module.  
 We have
 \begin{equation*}
 \grFrob( \RRR(n;2,\ell)_{\jj}; q, t) = \Theta_{e_{j_1}}  \Theta_{e_{j_{2}}} \nabla e_{n- (j_1 + j_2)}.
 \end{equation*} 
 \end{conjecture}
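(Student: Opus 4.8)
The plan is to build the statement on top of the one case that is fully proven — the $\ell = 0$ case, which is precisely Haiman's theorem $\grFrob(\RRR(n;2,0);q,t) = \nabla e_n$ — and to add fermionic rows one at a time, inducting on $\ell$ and on $|\jj| = j_1 + \cdots + j_\ell$. Before attempting the induction I would pin down the boundary data as consistency checks and as genuine partial results: (i) when $\ell = 0$ the statement is Haiman's theorem; (ii) the specialization $t = 0$ should reduce to a $\theta$-multigraded refinement of the Fields-type identities relating $\RRR(n;2,\ell)$ (with the $y$-variables killed) to the rings $R_{n,k}$ and hence, via Theorem~\ref{graded-rnk-structure}, to $\Delta'_{e_{k-1}} e_n \mid_{t=0}$; and (iii) the $q = t = 1$ specialization predicts $\dim \RRR(n;2,\ell)_{\jj}$ to equal a parking-function–type count coming from the Delta/Shuffle combinatorics. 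Even this ungraded dimension is open in general, which already signals where the difficulty lies, so I would treat establishing it — at least for small $\ell$ — as the first milestone.

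For the inductive step the goal is to realize the symmetric-function recursions satisfied by the operators $\Theta_{e_{j_1}} \cdots \Theta_{e_{j_\ell}} \nabla e_m$ by honest maps of $S_n$-modules. Recall $\Theta_{e_j}$ is multiplication by $e_j\!\left[\tfrac{\xx}{(1-q)(1-t)}\right]$ conjugated by the invertible Macdonald eigenoperator $\Pi$; on the combinatorial side D'Adderio–Mellit \cite{DM} established compositional recursions for these expressions, and D'Adderio–Iraci–Vanden Wyngaerd \cite{DIW} phrased them as a recursive structure on the conjectural modules $\RRR(n;2,\ell)_{\jj}$. I would try to produce, for each such recursion, an explicit element of $\SSS(n;2,\ell)$ — a ``super-Vandermonde''-type cocycle, in the spirit of $\delta_{n,k}$ and the polarization operators of Definition~\ref{double-v-definition} — whose multiplication map, together with a natural projection, fits into a short exact sequence
\begin{equation*}
0 \longrightarrow \RRR(\text{smaller})' \longrightarrow \RRR(n;2,\ell)_{\jj} \longrightarrow \RRR(\text{smaller})'' \longrightarrow 0
\end{equation*}
of $S_n$-modules of the correct bidegree. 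Granting exactness, the conjecture then follows by descending induction from the Haiman base case, exactly as the short exact sequences of Lemma~\ref{short-exact-sequence} compute $\grFrob(R_{n,k};q)$ from the trivial $k = 0$ ring.

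The hard part — and I expect it to be the real obstacle — is supplying the geometric or homological input that makes the base case usable and the sequences above provably exact. Haiman's $\ell = 0$ argument is not elementary: it rests on the vanishing theorems for the isospectral Hilbert scheme and the polygraph theorem, and there is presently \emph{no} geometric model playing that role for $\ell \geq 1$; the variety $X_{n,k}$ and the $\Delta$-Springer fibers $Y_{n,\lambda,s}$ only resolve the $t = 0$ slice. Two routes seem possible. One is to construct a ``super'' or nested Hilbert-scheme-type space — perhaps guided by the elliptic Hall / shuffle-algebra description of the $\Theta$-operators, as in \cite{BHMPS} — whose equivariant cohomology or Euler characteristics reproduce $\RRR(n;2,\ell)_{\jj}$, and then run Haiman's machinery. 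The other is to avoid geometry and control the harmonic space $I^{\perp} \subseteq \SSS(n;2,\ell)$ directly: extend the Rhoades–Wilson Vandermonde-module approach \cite{RWSuper} by exhibiting generators of $I^{\perp}$ assembled from $\delta_{n,k}$-type seeds together with the differentiation and polarization operators, compute the graded Frobenius image of their span via symmetric-function identities for $\Theta_{e_{j_1}} \cdots \Theta_{e_{j_\ell}} \nabla e_m$, and separately prove that the composite into the quotient is surjective (equivalently, a dimension count). Either route demands a genuinely new idea; absent one, the realistic deliverable is the $t = 0$ specialization in full, together with the $\ell = 1$ case conditional on the Fields Conjecture~\ref{fields-conjecture}.
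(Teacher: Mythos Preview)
The statement you are attempting to prove is a \emph{conjecture}: the paper presents it as an open problem due to D'Adderio--Iraci--Vanden Wyngaerd and offers no proof. There is therefore no ``paper's own proof'' to compare against. What you have written is not a proof but a research strategy, and you seem to recognize this yourself --- you explicitly say that ``either route demands a genuinely new idea'' and that the ``realistic deliverable'' is only the $t=0$ slice and a conditional $\ell=1$ case.

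Your assessment of the landscape is accurate and matches the paper's discussion: the $\ell=0$ case is Haiman's theorem; the $\ell=1$ case would follow from (and essentially is) the Zabrocki Conjecture~\ref{zabrocki-conjecture}, which is open; the fermionic $\ell=2$ case at $q=t=0$ is the result of Iraci--Rhoades--Romero \cite{IRR}; and the geometric tools of Sections~\ref{Spanning} and~\ref{Generalized} only reach the $t=0$ specialization. Your two proposed routes --- a super/nested Hilbert-scheme model, or direct control of the harmonic space via superspace Vandermondes and polarization --- are exactly the directions the paper points to in Section~\ref{Future}, and neither is currently known to work. The short-exact-sequence template you sketch is reasonable as a heuristic, but you have not identified what the ``smaller'' modules or the connecting maps would be, and there is no known recursion of this shape for $\RRR(n;2,\ell)_{\jj}$ that matches the $\Theta$-operator recursions on the symmetric-function side. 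In short: your proposal correctly diagnoses why the conjecture is hard, but it does not close the gap, and the paper does not either.
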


We have (see \cite{IRR, KR}) that $\RRR(n;2,2)_{\jj} = 0$
 whenever $j_1 + j_2  \geq n$
so that Conjecture~\ref{theta-conjecture} gives a complete description of the multigraded 
$S_n$-structure of $\RRR(n;2,2)$.  
However, in the case of $\ell > 2$ fermionic sets of variables there are nonzero pieces 
$\RRR(n;2,\ell)_{\jj}$ for which $j_1 + \cdots + j_{\ell} \geq n$.
D'Adderio et. al. show that Conjecture~\ref{theta-conjecture} implies the Zabrocki Conjecture 
when $\ell = 1$.

The $\ell = 2$ case of Conjecture~\ref{theta-conjecture} is especially intriguing due to the symmetry of
the ring $\RRR(n;2,2)$.
The quotient of $\RRR(n;2,2)$ obtained by setting the anticommuting variables to zero is the diagonal coinvariant ring $DR_n$;
the corresponding specialization of
Conjecture~\ref{theta-conjecture}  is equivalent to Haiman's \cite{HaimanVanish} formula 
 $\grFrob(DR_n;q,t) = \nabla e_n$.
 The `fermionic diagonal coinvariant ring' $FDR_n$ obtained from $\RRR(n;2,2)$
 obtained by setting the {\em commuting} variables to zero was
introduced and studied by Kim and Rhoades \cite{KR}.  In terms of combinatorics,
 the ring $FDR_n$ encodes a kind of crossing resolution in set partitions
of $[n]$; see \cite{KR2}.
Iraci, Rhoades, and Romero \cite{IRR}  proved the case of Conjecture~\ref{theta-conjecture} involving $FDR_n$.

Returning to the general setting of $\RRR(n;r,\ell)$, F. Bergeron has a remarkable conjecture relating these 
modules when $r \rightarrow \infty$ or $\ell \rightarrow \infty$.
In order to state his conjecture, we need to consider  additional structure on $\RRR(n;r,\ell)$.

The action of the general linear groups $GL_r(\CC)$ and $GL_{\ell}(\CC)$ on the rows of 
$X_{r \times n}$ and $\Theta_{\ell \times n}$ gives $\RRR(n;r,\ell)$ the  structure
of a $GL_r(\CC) \times GL_{\ell}(\CC)$-module.  Since the actions of $S_n$ and
$GL_r(\CC) \times GL_{\ell}(\CC)$ commute, 
we have an action of the product group
\begin{equation}
\GGG(n;r, \ell) := S_n \times GL_r(\CC) \times GL_{\ell}(\CC)
\end{equation}
on $\RRR(n;r,\ell)$.

We encode $\GGG(n;r,\ell)$-structure of $\RRR(n;r,\ell)$ as a formal power series.
More precisely, we define the {\em character} of $\RRR(n;r,\ell)$ by
\begin{equation}
\ch \, \RRR(n;r,\ell) := \sum_{\mathbf{i}, \mathbf{j}}  \Frob( \RRR(n;r,\ell)_{\mathbf{i}, \mathbf{j}} \cdot
q_1^{i_1} \cdots q_r^{i_r} \cdot z_1^{j_1} \cdots z_{\ell}^{j_{\ell}}
\end{equation}
where the sum is over all $r$-tuples $\mathbf{i} = (i_1, \dots, i_{\ell})$ and $\ell$-tuples
$\jj = (j_1, \dots ,j_{\ell})$ of nonnegative integers
and $\RRR(n;r,\ell)_{\ii,\jj}$ is the piece of $\RRR(n;r,\ell)$ with commuting multidegree $\ii$
and anticommuting multidegree $\jj$. By the structure of polynomial representations of  
general linear groups, we have an expansion
\begin{equation}
\ch \, \RRR(n;r,\ell) = \sum_{\lambda, \mu, \nu} d_{\lambda, \mu, \nu} \cdot
s_{\lambda}(\xx) \cdot s_{\mu}(q_1, \dots, q_r) \cdot
s_{\nu}(z_1, \dots, z_{\ell})
\end{equation}
where $\lambda$ ranges over partitions of $n$, $\mu$ and $\nu$ range over arbitrary partitions, 
and the $d_{\lambda,\mu,\nu}$ are nonnegative integers.

Temporarily consider the rings $\RRR(n;r,0)$ involving commuting variables alone.
F. Bergeron proved \cite{BergeronMulti} that the limit 
\begin{equation}
\EEE_n(\xx, \mathbf{q}) := \lim_{r \rightarrow \infty} \ch \, \RRR(n;r,0)
\end{equation}
converges in the ring of formal power series where $\mathbf{q} = (q_1, q_2, \dots )$ is an infinite
alphabet tracking commuting multidegrees. We may write
\begin{equation}
\label{b-coefficient}
\EEE_n(\xx, \mathbf{q}) = \sum_{\lambda, \mu} b_{\lambda, \mu} \cdot s_{\lambda}(\xx) \cdot s_{\mu}(\mathbf{q})
\end{equation}
for nonnegative integers $b_{\lambda, \mu}$.

Although $\EEE_n$ does not explicitly
 involve  anticommuting information, F. Bergeron conjectured \cite{Bergeron} that all of the characters
  $\ch \, \RRR(n;r,\ell)$ are determined 
by $\EEE_n$ alone.
 Let $\zz = (z_1, z_2, \dots )$ be an infinite alphabet of anticommutative tracking variables.
 
 \begin{conjecture}
 \label{supersymmetry-conjecture}
 {\em (Combinatorial Supersymmetry \cite{Bergeron})} For any $r, \ell \geq 0$, the character 
 $\ch \, \RRR(n;r,\ell)$ may be obtained from the `universal' symmetric function
 \begin{equation}
 \label{universal-formula}
\sum_{\lambda, \mu, \nu, \rho} b_{\lambda, \mu} \cdot c_{\nu, \rho}^{\mu} \cdot s_{\lambda}(\xx) \cdot
 s_{\nu}(\mathbf{q}) \cdot s_{\rho'}(\zz)
 \end{equation}
 by evaluating $q_i = 0$ for $i > r$ and $z_j = 0$ for $j > \ell$.
 Here the 
 $b_{\lambda, \mu}$ are as in the expression \eqref{b-coefficient}
for  $\EEE_n(\xx, \mathbf{q})$ and $c_{\nu,\rho}^{\mu}$ is a Littlewood-Richardson coefficient.
 \end{conjecture}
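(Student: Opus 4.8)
The plan is to realize $\RRR(n;r,\ell)$ as one member of a family of modules over the general linear Lie superalgebra $\mathfrak{gl}(r|\ell)$ and to reduce Conjecture~\ref{supersymmetry-conjecture} to a single ``losslessness'' (flatness) statement for this family. Writing $\CC^{r|\ell}$ for the super vector space with even part $\CC^r$ and odd part $\CC^\ell$, there is a canonical identification
\begin{equation*}
\SSS(n;r,\ell) = \mathrm{Sym}\!\left(\CC^{r|\ell}\otimes\CC^n\right)
\end{equation*}
of the superspace polynomial ring with the super-symmetric algebra, the even rows contributing the commuting variables $x_{a,i}$ and the odd rows the anticommuting variables $\theta_{b,i}$. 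Under this identification $\mathfrak{gl}(r|\ell)=\mathfrak{gl}(\CC^{r|\ell})$ acts on $\SSS(n;r,\ell)$, and this action commutes with the $S_n$-action on the $\CC^n$ factor; its odd generators are precisely the super-polarization operators $\sum_{i=1}^n x_{a,i}\,\partial/\partial\theta_{b,i}$ and $\sum_{i=1}^n \theta_{b,i}\,\partial/\partial x_{a,i}$ of the type appearing in Definition~\ref{double-v-definition}. Since every element of $\mathfrak{gl}(r|\ell)$ commutes with the diagonal $S_n$, the ideal generated by the positive-degree $S_n$-invariants is $\mathfrak{gl}(r|\ell)$-stable, so the action descends to $\RRR(n;r,\ell)$, making it a finite-dimensional $\bigl(\mathfrak{gl}(r|\ell)\times S_n\bigr)$-module which is a quotient of the super Howe--Weyl module $\mathrm{Sym}(\CC^{r|\ell}\otimes\CC^n)$.

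Next I would pass to the stable limit $r,\ell\to\infty$. Following Bergeron's argument in \cite{BergeronMulti} for the commuting case $\ell=0$, one shows that $\ch\,\RRR(n;r,\ell)$ converges to a formal power series $\widehat{\EEE}_n(\xx,\mathbf{q},\zz)$ with $\widehat{\EEE}_n(\xx,\mathbf{q},0)=\EEE_n(\xx,\mathbf{q})$. Because $\RRR(n;\infty,\infty)$ is a quotient of the super Howe module, its $\mathfrak{gl}(\infty|\infty)$-content is a nonnegative integer combination of the polynomial (covariant) irreducibles, whose characters are the hook/super Schur functions $hs_\mu(\mathbf{q};\zz)=\sum_{\nu,\rho}c^{\mu}_{\nu\rho}\,s_\nu(\mathbf{q})\,s_{\rho'}(\zz)$. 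Hence
\begin{equation*}
\widehat{\EEE}_n(\xx,\mathbf{q},\zz)=\sum_{\lambda,\mu} b_{\lambda,\mu}\,s_\lambda(\xx)\,hs_\mu(\mathbf{q};\zz),
\end{equation*}
and specializing $\zz=0$, using $hs_\mu(\mathbf{q};0)=s_\mu(\mathbf{q})$, identifies these multiplicities with Bergeron's $b_{\lambda,\mu}$ from \eqref{b-coefficient}; expanding each $hs_\mu$ exhibits $\widehat{\EEE}_n$ as the universal series \eqref{universal-formula}. Up to this point everything is essentially formal.

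The conjecture then reduces to the assertion that finite truncation is lossless: for all $r,\ell$,
\begin{equation*}
\ch\,\RRR(n;r,\ell)=\widehat{\EEE}_n(\xx,\mathbf{q},\zz)\,\Big|_{q_i=0\ (i>r),\ z_j=0\ (j>\ell)}.
\end{equation*}
One inequality is automatic, since restriction to $\mathfrak{gl}(r|\ell)$ can only kill the summands indexed by partitions $\mu$ not contained in the $(r,\ell)$-hook; the content is that no further $S_n$-multiplicity is lost, i.e. that the positive-degree $S_n$-invariant ideal does not ``over-cut'' at small rank. This is exactly where the already-known cases are hard: it specializes to Haiman's count $\dim DR_n=(n+1)^{n-1}$ when $(r,\ell)=(2,0)$ \cite{HaimanVanish}, and to the still-open identity $\dim SR_n=\sum_{k}k!\cdot\Stir(n,k)$ when $(r,\ell)=(1,1)$. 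I therefore expect the construction of the $\mathfrak{gl}(r|\ell)$-action and the formal reduction above to be routine, and this flatness statement — equivalently, a uniform lower bound on the relevant Hilbert series — to be the main obstacle.

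A plausible route to the hard step is to produce a spanning set (ideally a basis) of $\RRR(n;r,\ell)$ that is stable under increasing $r$ and $\ell$, extending the Vandermonde/harmonic models of \cite{RWSuper} and Definition~\ref{double-v-definition} from one or two extra rows to arbitrarily many; a successful such construction would pin down $\dim\RRR(n;r,\ell)$ from below and hence force equality. Alternatively, one could seek a $\bigl(\mathfrak{gl}(r|\ell)\times S_n\bigr)$-equivariant point locus $Z$ with $\RRR(n;r,\ell)\cong\CC[Z]$ via orbit harmonics, in the spirit of the identification $R_{n,k}=\CC[x_1,\dots,x_n]/\gr\,\II(Z_{n,k})$; the conjecture would then become a combinatorial enumeration of $Z$ and a verification that $\gr\,\II(Z)$ is generated by the expected invariants. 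Given the history of $DR_n$ and the geometry of the spaces $X_{n,k}$ in \cite{PR,HaimanVanish}, it seems likely that the decisive input will be geometric — a moduli interpretation of $\RRR(n;r,\ell)$ — rather than purely combinatorial.
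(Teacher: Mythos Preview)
The statement you are addressing is a \emph{conjecture} in the paper, not a theorem; the paper offers no proof, only the formulation (due to F.~Bergeron) together with Remark~\ref{fermionic-supersymmetry-remark} giving an equivalent reformulation in terms of the purely anticommuting limit $\FFF_n(\xx,\zz)$. So there is no proof in the paper for your proposal to be compared against.

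That said, your write-up is not a proof either, and you are candid about this. The formal portion --- endowing $\SSS(n;r,\ell)$ and hence $\RRR(n;r,\ell)$ with a $\mathfrak{gl}(r|\ell)$-action via super-polarization, passing to the stable limit, and expressing the universal series \eqref{universal-formula} through hook Schur functions $hs_\mu(\mathbf{q};\zz)$ --- is a sound and standard reduction, and it is essentially the perspective from which Bergeron formulated the conjecture. Your identification of the residual ``flatness'' statement as the entire content is exactly right: the specialization $(r,\ell)=(2,0)$ is Haiman's theorem on $DR_n$, and $(r,\ell)=(1,1)$ is the Fields Conjecture~\ref{fields-conjecture}, which is open. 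So your proposal correctly isolates the obstruction but does not remove it; what you have is a research plan, not a proof, and the paper is in the same position.
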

 
 Notice that the partition $\rho$ is unconjugated in the Littlewood-Richardson coefficient
 $c_{\nu,\rho}^{\mu}$ in \eqref{universal-formula} but is conjugated in the Schur 
 function $s_{\rho'}(\zz)$.  The polynomial \eqref{universal-formula} may be expressed more succinctly
 in plethystic notation as 
 $\EEE_n(\xx, \mathbf{q} - \varepsilon \zz) 
 = \sum_{\lambda, \mu} b_{\lambda, \mu} \cdot s_{\lambda}(\xx) \cdot s_{\mu}[\mathbf{q} - \varepsilon \cdot \zz]$.

 \begin{remark}
 \label{fermionic-supersymmetry-remark}
 As stated, Conjecture~\ref{supersymmetry-conjecture} implies that the characters 
 $\ch \, \RRR(n;r,\ell)$ of the general modules $\RRR(n;r,\ell)$ are determined by those of 
 the `purely commuting' modules $\RRR(n;r,0)$.  However, there is an equivalent formulation
 involving the `purely anticommuting' modules $\RRR(n;0,\ell)$. 
More precisely, let 
 \begin{equation}
 \FFF_n(\xx, \zz) := \lim_{\ell \rightarrow \infty} \ch \, \RRR(n;0,\ell)
 \end{equation}
 be the limit of the `purely anticommuting characters'.  As in the commuting case,
this limit exists and there are nonnegative integers $d_{\lambda, \mu}$ such that
\begin{equation}
\FFF_n(\xx, \zz) = \sum_{\lambda, \mu} d_{\lambda, \mu} \cdot s_{\lambda}(\xx) \cdot s_{\mu}(\zz).
\end{equation}
Motivated by the relationship between \eqref{universal-formula} and $\EEE_n(\xx, \mathbf{q})$, we 
consider the symmetric
function
\begin{equation}
\label{fermionic-universal-formula}
\sum_{\lambda, \mu, \nu, \rho} d_{\lambda, \mu} \cdot c_{\nu, \rho}^{\mu} \cdot
s_{\lambda}(\xx) \cdot s_{\nu'}(\mathbf{q}) \cdot s_{\rho}(\zz)
\end{equation}
where the partition $\nu$ is unconjugated in $c_{\nu,\rho}^{\mu}$ but conjugated in $s_{\nu'}(\mathbf{q})$.
Conjecture~\ref{supersymmetry-conjecture} is equivalent to the assertion that, for any $r, \ell \geq 0$,
the symmetric function $\ch \, \RRR(n;r,\ell)$ is obtained from  
\eqref{fermionic-universal-formula} by evaluating $q_i = 0$ for $i > r$ and $z_j = 0$ for $j > \ell$.
\end{remark}

At a high level,
Remark~\ref{fermionic-supersymmetry-remark} says that the anticommuting modules $\RRR(n;0,\ell)$ should
`contain the same information as' the commuting modules $\RRR(n;r,0)$ (and, in particular, the diagonal
coinvariant ring $DR_n = \RRR(n;2,0)$).
Thus, the modules $\RRR(n;0,\ell)$ will likely prove very 
challenging to study as $\ell \rightarrow \infty$.

 Another remarkable conjecture of F. Bergeron's states that the symmetric function $\EEE_n(\xx,\mathbf{q})$ 
 ``contains the data of" symmetric functions appearing in the Delta Conjecture.
 For any symmetric function $F$, let $F^{\perp}: \Lambda \rightarrow \Lambda$ be the adjoint operation
 to multiplication by $F$ under the (nondegenerate) Hall inner product. The map $F^{\perp}$ is characterized 
 by 
 \begin{equation}
 \langle F^{\perp} G, H \rangle = \langle G, FH \rangle \quad \quad \text{for all $G, H \in \Lambda$.}
 \end{equation}

 \begin{conjecture}
 \label{skewing-conjecture}
 {\em (Skewing Conjecture \cite{BergeronMultiSkew})} 
 For any $n = a + b + 1$, we have
 \begin{equation}
 \Delta'_{e_a} e_n = \left[ e_b(\mathbf{q})^{\perp} \EEE_n(\xx,\mathbf{q})  \right]_{\mathbf{q} \rightarrow (q,t,0,0,0, \dots )}
 \end{equation}
 where the skewing operator $e_b(\mathbf{q})^{\perp}$ acts on the grading $\mathbf{q}$-variables alone.
 \end{conjecture}

 At a combinatorial level, the action of $e_b(\mathbf{q})^{\perp}$ on a Schur function $s_\lambda(\mathbf{q})$ in the 
 $\mathbf{q}$-variables is a sum over Schur functions $s_\mu(\mathbf{q})$
 such that $\lambda/\mu$ is a skew diagram of size $b$ in which every row has at most one box.
 Algebraically, the action of 
 $e_b(\mathbf{q})^{\perp}$ on a $GL_\infty(\CC)$-module corresponds to restriction to the `parabolic subgroup'
 $GL_b(\CC) \times GL_\infty(\CC)$, and then considering the $(\{ \mathrm{Id.} \} \times GL_\infty(\CC))$-action on the 
 $(1^b)$-weight space of $GL_b(\CC)$.
 Even though the formula in 
 Conjecture~\ref{skewing-conjecture} relates symmetric functions in the two parameters $q, t$ alone, 
 more parameters $q_1, q_2, \dots $ are necessary before specialization for the 
 formula to hold.

\end{document}